\documentclass{amsart}

\usepackage{amsmath}
\usepackage{amssymb}
\usepackage{commath}
\usepackage{mathtools}
\usepackage{xypic}

\usepackage{hyperref}

\theoremstyle{plain}
\newtheorem{theorem}{Theorem}
\newtheorem{lemma}[theorem]{Lemma}
\newtheorem{proposition}[theorem]{Proposition}
\newtheorem{corollary}[theorem]{Corollary}
\theoremstyle{definition}
\newtheorem{definition}[theorem]{Definition}
\newtheorem{remark}[theorem]{Remark}
\newtheorem{example}[theorem]{Example}

\numberwithin{theorem}{section}
\numberwithin{equation}{section}

\newcommand{\B}{\mathbb{B}}
\newcommand{\C}{\mathbb{C}}
\newcommand{\N}{\mathbb{N}}
\newcommand{\R}{\mathbb{R}}
\newcommand{\T}{\mathbb{T}}
\newcommand{\Z}{\mathbb{Z}}

\newcommand{\GL}{\mathrm{GL}}

\newcommand{\SU}{\mathrm{SU}}

\newcommand{\Ad}{\mathrm{Ad}}

\newcommand{\im}{\mathrm{Im}}
\newcommand{\re}{\mathrm{Re}}

\newcommand{\fg}{\mathfrak{g}}
\newcommand{\fh}{\mathfrak{h}}

\newcommand{\cT}{\mathcal{T}}
\newcommand{\cA}{\mathcal{A}}

\begin{document}

\title[Moment maps and Toeplitz operators]{Moment maps of Abelian groups and commuting Toeplitz operators acting on the unit ball}

\author{Ra\'ul Quiroga-Barranco}
\address{Centro de Investigaci\'on en Matem\'aticas, Guanajuato, M\'exico}
\email{quiroga@cimat.mx}

\author{Armando S\'anchez-Nungaray}
\address{Facultad de Matem\'aticas, Universidad Veracruzana, Veracruz, M\'exico}
\email{sancheznungaray@gmail.com}

\maketitle

\begin{abstract}
    We prove that to every connected Abelian subgroup $H$ of the biholomorphisms of the unit ball $\mathbb{B}^n$ we can associate a set of bounded symbols whose corresponding Toeplitz operators generate a commutative $C^*$-algebra on every weighted Bergman space. These symbols are of the form $a(z) = f(\mu^H(z))$, where $\mu^H$ is the moment map for the action of $H$ on $\B^n$. We show that, for this construction, if $H$ is a maximal Abelian subgroup, then the symbols introduced are precisely the $H$-invariant symbols. We provide the explicit computation of moment maps to obtain special sets of symbols described in terms of coordinates. In particular, it is proved that our symbol sets have as particular cases all symbol sets from the current literature that yield Toeplitz operators generating commutative $C^*$-algebras on all weighted Bergman spaces on the unit ball $\mathbb{B}^n$. Furthermore, we exhibit examples that show that some of the symbol sets introduced in this work have not been considered before. Finally, several explicit formulas for the corresponding spectra of the Toeplitz operators are presented. These include spectral integral expressions that simplify the known formulas for maximal Abelian subgroups for the unit ball.
\end{abstract}


\section{Introduction}
Toeplitz operators with special symbols on the unit ball and other bounded symmetric domains have proved to be a very interesting topic of study in operator theory. The introduction of geometric and Lie theoretic techniques has been particularly useful. This is not surprising since bounded symmetric domains, and hence the unit ball, carry rich dynamics through their biholomorphisms. Among the first works to make use of these geometric techniques we find \cite{QVUnitBall1} and \cite{QVUnitBall2}. In these references, it was shown that every maximal Abelian subgroup, or MASG for short, of biholomorphisms of the unit ball $\B^n$ yields a commutative $C^*$-algebra generated by Toeplitz operators on every weighted Bergman space.

We now give a more precise statement of the last claim. Up to conjugacy of Lie groups, there are exactly $n+2$ MASGs of the biholomorphisms of $\B^n$. We list all these groups and their actions in Section~\ref{sec:MASG_momentmaps}, where we make use of the Siegel domain $D_n$ realization of the unit ball for $n+1$ of the conjugacy classes. Hence, from now we will denote by $D$ either $\B^n$ or $D_n$. For every such MASG $G$ in that list it is considered in \cite{QVUnitBall1} and \cite{QVUnitBall2} the space of essentially bounded $G$-invariant symbols, that we will denote by $L^\infty(D)^G$ in this work. Then, it is proved in \cite{QVUnitBall1} that the $C^*$-algebra generated by Toeplitz operators with symbols in $L^\infty(D)^G$ is commutative in every weighted Bergman space. However, it turns out that if $H$ is a proper subgroup of the MASG $G$, then the Toeplitz operators with $H$-invariant symbols generate a $C^*$-algebra which is no longer commutative (see \cite{DOQJFA} for examples).

In other words, for an Abelian subgroup $H$ of the biholomorphisms of $D$, when we use $H$-invariant symbols, the group $H$ yields a commutative $C^*$-algebra generated by Toeplitz operators if and only if $H$ is maximal, where the maximality is taken by the inclusion in the collection of Abelian subgroups. The following question arises: is there a way to associate commutative $C^*$-algebras generated by Toeplitz operators to every Abelian subgroup of the biholomorphisms of $D$? This would have to involve some construction different from just taking invariant symbols.

One of the goals of this work is to give an affirmative answer to the previous question. There is indeed a geometric construction that allows us to assign to every connected Abelian subgroup $H$ of the biholomorphisms of $\B^n$ a set of symbols whose corresponding Toeplitz operators generate a commutative $C^*$-algebra on every weighted Bergman space of $D$. This geometric construction involves symplectic geometry and the so-called moment map of a symplectic action.

Since $D$ is a K\"ahler manifold for the Bergman metric, it carries a closed $2$-form that turns it into a symplectic manifold. Hence, we can associate to every smooth function $f$ on $D$ a so-called Hamiltonian vector field $X_f$. This process can be reversed under some circumstances. More precisely, for a suitable vector field $X$ it is possible to find a function $f$ such that $X_f = X$, i.e.~the Hamiltonian vector field of $f$ is $X$. Taking this one step further, if $H$ is a subgroup of biholomorphisms of $D$, then a standard differentiation process realizes the Lie subalgebra $\fh$ of $H$ as a space of vector fields on $D$. And, under some conditions, for every vector field $X$ in this space it is possible to obtain a function $f$ such that $X_f = X$. Collecting all these functions through coordinates in $\fh$ we obtain the moment map of the action of $H$. We refer to Section~\ref{sec:unitball_geometry} for the definitions and basic properties which we write down explicitly for both the unit ball $\B^n$ and the Siegel domain $D_n$.

For the time being let us say that for any connected Abelian subgroup $H$ of the biholomorphisms of $D$, the moment map is a function $\mu^H : D \rightarrow \fh^*$, where $\fh^*$ is the dual vector space of $\fh$. Every such subgroup $H$ is, up to conjugacy, contained in one of the MASGs listed in Section~\ref{sec:MASG_momentmaps}. For the latter, the Lie algebra is $\R^n$ in all cases. Hence, after some identifications we can consider the moment map of $H$ as a map $\mu^H : D \rightarrow \fh$, where $\fh$ is some subspace of $\R^n$. Then, we consider the symbols $a : D \rightarrow \C$ that can be written as
\[
    a(z) = f(\mu^H(z))
\]
for every $z \in D$ and some function $f$. We call these moment map functions or $\mu$-functions (see Definition~\ref{def:momentmapfunction}). The space of essentially bounded moment map functions for $H$ will be denoted by $L^\infty(D)^{\mu^H}$. In this work we show that these spaces of special symbols give a positive answer to the question formulated above. More precisely, for every connected Abelian subgroup $H$ the $C^*$-algebra generated by Toeplitz operators whose symbols lie in $L^\infty(D)^{\mu^H}$ is commutative on every weighted Bergman space. This is the content of Theorem~\ref{thm:Toeplitz_mu-functions}.

We also prove that when $H$ is a MASG of the biholomorphisms of $D$, we have $L^\infty(D)^{\mu^H} = L^\infty(D)^H$, see Proposition~\ref{prop:H-invariance_momentmaps}. In words, for MASGs the invariant symbols and the moment map functions are exactly the same. As stated after Definition~\ref{def:momentmapfunction} this implies that, in the case of MASGs, the $C^*$-algebras considered in \cite{QVUnitBall1} are precisely our $C^*$-algebras obtained from moment map functions. Hence, our Theorem~\ref{thm:Toeplitz_mu-functionsMASG} is the reformulation of the commutativity results for symbols invariant under MASGs in terms of moment map functions. And so Theorem~\ref{thm:Toeplitz_mu-functions} is an extension to arbitrary connected Abelian subgroups, through our moment map constructions, of the commutativity results from \cite{QVUnitBall1}.

As it was already observed in \cite{BauerVasilevskiQuasiNilpotent2012}, \cite{BauerVasilevskiQuasiHyperbolic2012}, \cite{DG-HM-SN}, \cite{GQVJFA}, \cite{MR-QB-SN}, \cite{QVUnitBall1}, \cite{QVUnitBall2}, \cite{SVNilpotentDuduchava2017},  \cite{VasilevskiQuasiRadial2010}, \cite{VasilevskiParabolicQuasiRadial2010}, it is very important and enlightening to have explicit computations for the symbols and Toeplitz operators under study. Hence, we compute in Section~\ref{sec:MASG_momentmaps} the moment maps for all the MASGs of the biholomorphisms of $D$. This is how we prove that invariant symbols and moment map functions for MASGs are the same as noted above.

In Section~\ref{sec:commToep_momentmaps_Abelian} we give a formula in Proposition~\ref{prop:momentmap_AbelianfromMASG} to compute the moment map of a connected Abelian subgroup $H$ in terms of the moment map of a MASG $G$ that contains it. We also discuss in this section how to describe all the connected Abelian subgroups of a given MASG $G$. Furthermore, we introduce the use of a basis $\beta$ for the Lie algebra $\fh$ of such a connected Abelian subgroup $H$ to describe the moment map functions for $H$. This leads us to a coordinate oriented description of the symbols defined by moment map functions that depend only on the choice of a MASG $G$, where $H$ lies, and a linearly independent subset $\beta \subset \R^n$, a basis for the Lie algebra $\fh$ of $H$. We proceed with such a description for each MASG listed in Section~\ref{sec:MASG_momentmaps} to obtain five families of special symbols discussed in separate subsections: $\beta$-quasi-elliptic, $\beta$-quasi-parabolic, $\beta$-quasi-hyperbolic, $\beta$-nilpotent and $\beta$-quasi-nilpotent. We will refer to these families of symbols collectively as $\beta$-symbols. Each class defines a smaller set of symbols than the corresponding set of invariant symbols. Hence, we obtain commutativity results for the $C^*$-algebras generated by Toeplitz operators with these $\beta$-symbols. The corresponding results are stated in respective subsections of Section~\ref{sec:commToep_momentmaps_Abelian} for all five types.

It turns out that our $\beta$-symbols contain as particular cases all of the special symbols that appear in the literature, to the best of our knowledge, whose Toeplitz operators generate commutative $C^*$-algebras on every weighted Bergman space on the unit ball. This is discussed in the subsections of Section~\ref{sec:commToep_momentmaps_Abelian}. In fact, we prove that, for the quasi-elliptic case, the radial, quasi-radial and separately radial symbols found in \cite{VasilevskiQuasiRadial2010} can be realized as $\beta$-quasi-elliptic symbols for corresponding sets $\beta$. The parabolic quasi-radial symbols found in \cite{VasilevskiParabolicQuasiRadial2010} are $\beta$-quasi-parabolic symbols for some $\beta$. The quasi-nilpotent quasi-radial symbols from \cite{BauerVasilevskiQuasiNilpotent2012} are $\beta$-quasi-nilpotent for a suitable $\beta$. Furthermore, we show in all these cases that there are some $\beta$ for which the corresponding $\beta$-symbols cannot be realized as symbols from these references (see the corresponding subsections). Hence, we have effectively introduced new sets of symbols.

The cases of the $\beta$-quasi-hyperbolic and the $\beta$-nilpotent symbols require some special remarks.

In the previous literature, it has been observed the construction of new symbols from the MASGs by using a sort of quasi-radial procedure. This consists on replacing the dependence of $(|z_1|, \dots, |z_\ell|)$ (with $\ell$ depending on the MASG) by dependence of $(|z_{(k_1)}|, \dots, |z_{(k_m)}|)$, where $k$ is a partition of $\ell$ (see the notation and remarks from Example~\ref{ex:quasi-radial_partitions}). However, this does not work with the nilpotent MASG since there is no toral part for this subgroup. The only new special symbols introduced from the nilpotent case before can be found in \cite{SVNilpotentDuduchava2017} which basically simply eliminates the dependence on some coordinates. Our $\beta$-nilpotent symbols contain this as a particular case (see Example~\ref{ex:beta-nilpotent-noradial}) but provides a much larger variety of new symbols from the nilpotent case.

On the other hand, we have proved that the quasi-hyperbolic symbols are special cases of our $\beta$-quasi-hyperbolic symbols, as it follows from Remark~\ref{rmk:Hmomentfunctions_Gmomentfunctions} and the corresponding definitions. Also, we prove in Subsection~\ref{subsec:beta-quasi-hyperbolic-spectra} how to relate the coordinates used in \cite{BauerVasilevskiQuasiHyperbolic2012} and \cite{QVUnitBall1} to define and treat the quasi-hyperbolic symbols with the coordinates defined by the moment map of the quasi-hyperbolic action. It seems to us that the latter coordinates are somewhat simpler than the former. Hence, it is possible that the use of moment map coordinates can simplify some of the current integral spectral formulas in this case.

Finally, Section~\ref{sec:spectralformulas} provides explicit formulas for the spectra of the Toeplitz operators with $\beta$-symbols. We would like to remark that such formulas are particularly simple in comparison with those from previous works. This is so even for the case of symbols that are invariant with respect to MASGs. Somehow this signals the fact that coordinates oriented by moment maps seem to be most natural to understand this sort of commutative $C^*$-algebras generated by Toeplitz operators.

We would like to mention some previously related results. These can be found in \cite{DG-HM-SN} and \cite{MR-QB-SN}, where the authors considered symplectic geometric constructions on the  poly-disk $\Delta^{n}$ of dimension $n$ where $\Delta$ denotes the unit disk and the $n$-dimensional projective space $\mathbb{P}^{n}(\C)$, respectively. Such constructions lead to commutative $C^*$ and Banach algebras generated by Toeplitz operators on the Bergman spaces of these manifolds.

\section{Unit ball analysis}\label{sec:unitball_analysis}
Let us denote by $\dif v$ the Lebesgue measure on the $n$-dimensional unit ball $\B^n$, and for every $\lambda > -1$ let us consider the weighted measure
\[
    \dif v_\lambda(z) = c_\lambda (1-|z|^2)^\lambda \dif v(z)
\]
where $c_\lambda$ is the constant given by
\[
    c_\lambda = \frac{\Gamma(n+1+\lambda)}{\pi^n \Gamma(\lambda+1)}
\]
which is chosen so that $\dif v_\lambda$ is a probability measure. As it is customary, this leads to the weighted Bergman space $\cA^2_\lambda(\B^n)$, defined for $\lambda > -1$, which consists of the holomorphic functions on $\B^n$ that belong to $L^2(\B^n,v_\lambda)$. This is well known to be a reproducing kernel Hilbert space with Bergman kernel given by
\[
    K_{\B^n,\lambda}(z,w) = \frac{1}{(1-z\cdot \overline{w})^{\lambda+n+1}}.
\]
The correspoding orthogonal projection $B_{\B^n,\lambda} : L^2(\B^n,v_\lambda) \rightarrow \cA^2_\lambda(\B^n)$ is thus given by
\[
    B_{\B^n,\lambda}(f)(z)
    = \int_{\B^n} \frac{f(w) \dif v_\lambda(w)}{(1-z\cdot\overline{w})^{\lambda+n+1}}
    = c_\lambda \int_{\B^n} \frac{f(w) (1-|w|^2)^\lambda \dif v(w)}{(1-z\cdot\overline{w})^{\lambda+n+1}}
\]
for every $f \in L^2(\B^n,v_\lambda)$ and $z \in \B^n$. In what follows the weightless case, corresponding to $\lambda = 0$, will be denoted by dropping the subindex $\lambda$. In particular, the weightless Bergman kernel is denoted by $K_{\B^n} = K_{\B^n,0}$.

For every $a \in L^\infty(\B^n)$ we consider the Toeplitz operator with symbol $a$ given by
\begin{align*}
    T_a = T^{(\lambda)}_a : \cA^2_\lambda(\B^n) &\longrightarrow \cA^2_\lambda(\B^n) \\
    T_a(f) &= B_{\B^n,\lambda}(af).
\end{align*}
In other words, we have
\[
    T_a(f)(z) = c_\lambda \int_{\B^n} \frac{a(w) f(w) (1-|w|^2)^\lambda \dif v(w)}{(1-z\cdot\overline{w})^{\lambda+n+1}}
\]
for every $f \in \cA^2_\lambda(\B^n)$ and $z \in \B^n$.

On the other hand, we let $D_n$ denote the unbounded Siegel domain realization of $\B^n$ given by
\[
    D_n = \{ z = (z',z_n) \in \C^{n-1}\times\C \mid \im(z_n) > |z'|^2 \}.
\]
In this work, for $z \in \C^n$ we will denote by $z' \in \C^{n-1}$ the first $n-1$ components of $z$ so that we have $z = (z',z_n)$.

The well known Cayley transformation $\varphi$ and its inverse $\psi$ relating $\B^n$ and $D_n$ are given by
\begin{align*}
    \varphi : \B^n &\longrightarrow D_n  &
            \psi : D_n &\longrightarrow \B^n \\
    z &\longmapsto \frac{i}{1+z_n}\left(z',1-z_n\right), &
            z &\longmapsto \frac{1}{1-iz_n} \left(-2iz', 1+iz_n \right).
\end{align*}
On the Siegel domain $D_n$ and for every $\lambda > -1$ we consider the weighted measure
\[
    \dif \widehat{v}_\lambda (z) =
        \frac{c_\lambda}{4} (\im(z_n) - |z'|^2)^\lambda \dif v(z).
\]
Then, the weighted Bergman space on $D_n$ with weight $\lambda > -1$ is denoted by $\cA^2_\lambda(D_n)$ and consists of the holomorphic functions on $D_n$ that belong to $L^2(D_n,\widehat{v}_\lambda)$. There is a natural correspondence between the Bergman spaces $\cA^2_\lambda(D_n)$ and $\cA^2_\lambda(\B^n)$ given by $\psi$. More precisely, the linear map
\begin{align*}
    U_\lambda : L^2(\B^n,v_\lambda) &\longrightarrow
        L^2(D_n,\widehat{v}_\lambda) \\
    (U_\lambda(f))(z) &= \Big(\frac{2}{1-iz_n}\Big)^{\lambda+n+1}
            f(\psi(z)),
\end{align*}
for all $z \in D_n$, is a well defined unitary operator such that
\[
    U_\lambda(\cA^2(\B^n)) = \cA^2_\lambda(D_n).
\]
This and a straightforward computation shows that $\cA^2_\lambda(D_n)$ is a reproducing kernel Hilbert space with kernel given by
\[
    K_{D_n,\lambda}(z,w) = \frac{1}{\Big(
        \frac{z_n - \overline{w}_n}{2i} - z'\cdot \overline{w}'
        \Big)^{\lambda+n+1}},
\]
for every $\lambda > -1$. As before, the weightless kernel is denoted by dropping the subindex $\lambda$. Hence, we denote $K_{D_n} = K_{D_n,0}$.

By the same token, for every $a \in L^\infty(D_n)$ we define the corresponding Toeplitz operator on $\cA^2_\lambda(D_n)$ with symbol $a$ by
\begin{align*}
    T_a = T^{(\lambda)}_a : \cA^2_\lambda(D_n) &\longrightarrow \cA^2_\lambda(D_n) \\
    T_a(f) &= B_{D_n,\lambda}(af),
\end{align*}
which is thus explicitly given by
\[
    T_a(f)(z) = \frac{c_\lambda}{4} \int_{D_n} \frac{a(w) f(w) (\im(w_n)-|w'|^2)^\lambda \dif v(w)}{\Big(
        \frac{z_n - \overline{w}_n}{2i} - z'\cdot \overline{w}'
        \Big)^{\lambda+n+1}}
\]
for every $f \in \cA^2_\lambda(D_n)$ and $z \in D_n$.

\section{Unit ball geometry}\label{sec:unitball_geometry}
To define the Bergman metric of the unit ball $\B^n$ we consider the matrix coefficient functions
\[
    (g_{\B^n})_{jk}(z) = \frac{1}{n+1}
        \frac{\partial^2}{\partial z_j \partial\overline{z}_k}
            \log K_{\B^n}(z,z)
        = \frac{(1-|z|^2)\delta_{jk} + \overline{z}_j z_k}{(1-|z|^2)^2}
\]
where $j,k = 1, \dots, n$. This yields the Bergman metric $g_{\B^n}$ on $\B^n$, which is explicitly given as follows
\begin{align*}
    (g_{\B^n})_z &=\sum_{j,k=1}^n \frac{(1-|z|^2)\delta_{jk} +
                \overline{z}_j z_k}{(1-|z|^2)^2}
                    \dif z_j \otimes \dif \overline{z}_k  \\
        &= \frac{1}{(1-|z|^2)^2} \bigg(
            (1-|z|^2)\sum_{j=1}^n \dif z_j \otimes \dif \overline{z}_j +
            \sum_{j,k=1}^n \overline{z}_j z_k \dif z_j \otimes \dif \overline{z}_k\bigg),
\end{align*}
where $z \in \B^n$. This is a very well known example of a K\"ahler metric whose associated K\"ahler form is given by
\begin{align*}
    (\omega_{\B^n})_z &= i\sum_{j,k=1}^n \frac{(1-|z|^2)\delta_{jk} +
                \overline{z}_j z_k}{(1-|z|^2)^2}
                    \dif z_j \wedge \dif \overline{z}_k \\
        &= \frac{i}{(1-|z|^2)^2} \bigg(
            (1-|z|^2)\sum_{j=1}^n \dif z_j \wedge \dif \overline{z}_j +
                \sum_{j,k=1}^n \overline{z}_j z_k \dif z_j \wedge \dif \overline{z}_k\bigg),
\end{align*}
where $z \in \B^n$.

For every smooth function $f$ on $\B^n$, the Hamiltonian vector field associated to $f$ is the smooth vector field $X_f$ that satisfies
\[
    \dif f(X) = \omega_{\B^n}(X_f, X)
\]
for every smooth vector field $X$. In particular, we have the well known formula
\begin{align}
    X_f(z) &= i \sum_{j,k=1}^n (g_{\B^n})^{jk}(z)
            \bigg(
            \frac{\partial f}{\partial z_k} \frac{\partial}{\partial \overline{z}_j} -
            \frac{\partial f}{\partial \overline{z}_j} \frac{\partial}{\partial z_k}
            \bigg) \notag     \\
     &= i(1-|z|^2) \sum_{j,k=1}^n (\delta_{jk} - \overline{z}_j z_k)
            \bigg(
            \frac{\partial f}{\partial z_k} \frac{\partial}{\partial \overline{z}_j} -
            \frac{\partial f}{\partial \overline{z}_j} \frac{\partial}{\partial z_k}
            \bigg) \label{eq:Hamiltonian_Bn},
\end{align}
at every $z \in \B^n$, where $((g_{\B^n})^{jk}(z))_{j,k}$ denotes the inverse matrix of $((g_{\B^n})_{jk}(z))_{j,k}$.

Correspondingly, the Bergman metric of the Siegel domain $D_n$ is defined by the following matrix coefficients
\[
    (g_{D_n})_{jk}(z) = \frac{1}{n+1}
        \frac{\partial^2}{\partial z_j \partial\overline{z}_k}
            \log K_{D_n}(z,z),
\]
where $j,k=1, \dots, n$. In this case, the K\"ahler metric obtained is given by
\begin{multline*}
    (g_{D_n})_z = \frac{1}{(\im(z_n)-|z'|^2)^2}
        \bigg( (\im(z_n)-|z'|^2) \sum_{j=1}^{n-1}
                \dif z_j \otimes \dif \overline{z}_j
            + \sum_{j,k=1}^{n-1} \overline{z}_j z_k
                \dif z_j \otimes \dif \overline{z}_k \\
          + \frac{1}{2i}
            \sum_{j=1}^{n-1} (\overline{z}_j \dif z_j\otimes \dif\overline{z}_n
                - z_j \dif z_n \otimes \dif \overline{z}_j)
          + \frac{1}{4} \dif z_n \otimes \dif \overline{z}_n
            \bigg),
\end{multline*}
where $z \in D_n$. It follows that the K\"ahler form of $D_n$ is given by
\begin{multline*}
    (\omega_{D_n})_z = \frac{i}{(\im(z_n)-|z'|^2)^2}
        \bigg( (\im(z_n)-|z'|^2) \sum_{j=1}^{n-1}
                \dif z_j \wedge \dif \overline{z}_j
            + \sum_{j,k=1}^{n-1} \overline{z}_j z_k
                \dif z_j \wedge \dif \overline{z}_k  \\
          + \frac{1}{2i}
            \sum_{j=1}^{n-1} (\overline{z}_j \dif z_j\wedge \dif\overline{z}_n
                - z_j \dif z_n \wedge \dif \overline{z}_j)
          + \frac{1}{4} \dif z_n \wedge \dif \overline{z}_n
            \bigg),
\end{multline*}
where $z \in D_n$.

A simple computation shows that
\[
    K_{D_n}(z,z) = 4 |J_\C \psi(z)|^2 K_{\B^n}(\psi(z),\psi(z))
\]
for every $z \in D_n$, and this is easily seen to imply that $\psi$, and so $\phi$ as well, is an isometry for the Bergman metrics on $\B^n$ and $D_n$.

\begin{lemma}\label{lem:Hamiltonian_Dn}
    For every smooth function $f$ on $D_n$, the Hamiltonian vector field associated to $f$ is given by
    \begin{align*}
        X_f&(z) =  \\
        =&\; i(\im(z_n) - |z'|^2) \times \notag  \\
        &\times
            \Bigg[
                \sum_{k=1}^{n-1}
                    \bigg(
                    \frac{\partial f}{\partial z_k}
                    +2i \overline{z}_k \frac{\partial f}{\partial z_n}
                    \bigg) \frac{\partial}{\partial \overline{z}_k} -
                \sum_{k=1}^{n-1}
                    \bigg(
                    \frac{\partial f}{\partial \overline{z}_k}
                        -2i z_k \frac{\partial f}{\partial \overline{z}_n}
                    \bigg) \frac{\partial}{\partial z_k}   \\
        &+ \bigg(
                4\im(z_n) \frac{\partial f}{\partial z_n}
                -2i \sum_{k=1}^{n-1} z_k \frac{\partial f}{\partial z_k}
            \bigg) \frac{\partial}{\partial \overline{z}_n}
        - \bigg(
                4\im(z_n) \frac{\partial f}{\partial \overline{z}_n}
                +2i \sum_{k=1}^{n-1} \overline{z}_k
                            \frac{\partial f}{\partial \overline{z}_k}
                \bigg) \frac{\partial}{\partial z_n} \Bigg],
    \end{align*}
    for every $z \in D_n$.
\end{lemma}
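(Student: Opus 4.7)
The plan is to specialize the general K\"ahler formula for Hamiltonian vector fields — the same one whose application to $\B^n$ gives \eqref{eq:Hamiltonian_Bn} — namely
\[
    X_f = i\sum_{j,k=1}^n g^{jk}(z)\Big(\frac{\partial f}{\partial z_k}\frac{\partial}{\partial \overline{z}_j} - \frac{\partial f}{\partial \overline{z}_j}\frac{\partial}{\partial z_k}\Big),
\]
where $(g^{jk})$ is the inverse of the matrix of metric coefficients $((g_{D_n})_{jk})$. Thus the entire lemma reduces to computing this inverse explicitly and then collecting terms.

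Set $h = \im(z_n) - |z'|^2$, so in particular $h + |z'|^2 = \im(z_n)$ (this identity will drive every simplification below). Reading off the metric as stated in the excerpt, the matrix $\tilde G = h^2 (g_{D_n})$ admits a block decomposition along the splitting $z=(z',z_n)$: the upper-left $(n-1)\times(n-1)$ block is $Q = hI + \overline{z}'(z')^T$, the upper-right column is $\frac{1}{2i}\overline{z}'$, the lower-left row is $-\frac{1}{2i}(z')^T$, and the lower-right scalar is $\tfrac{1}{4}$. Since $Q$ is a rank-one perturbation of $hI$, the Sherman--Morrison formula gives
\[
    Q^{-1} = \frac{1}{h}\Big(I - \frac{\overline{z}'(z')^T}{\im(z_n)}\Big),
\]
and the Schur complement then collapses to $h/(4\,\im(z_n))$. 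Plugging these into the standard $2\times 2$ block-inverse formula produces the remarkably clean expressions
\[
    g^{jk} = h\,\delta_{jk}\ (j,k \le n-1),\quad g^{jn} = 2ih\,\overline{z}_j,\quad g^{nk} = -2ih\,z_k,\quad g^{nn} = 4h\,\im(z_n),
\]
which can be sanity-checked in one line against $\sum_k g_{jk} g^{kl} = \delta_{jl}$.

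Substituting these four kinds of entries into the Hamiltonian formula and splitting each double sum according to whether the free index lies in $\{1,\dots,n-1\}$ or equals $n$, the four blocks of terms in the statement drop out directly: the diagonal entries $g^{jk} = h\delta_{jk}$ give the plain $\partial f/\partial z_k$ and $\partial f/\partial \overline{z}_k$ contributions; the mixed entries $g^{jn} = 2ih\overline{z}_j$ and $g^{nk} = -2ihz_k$ produce the cross-terms $2i\overline{z}_k\,\partial f/\partial z_n$ and $-2iz_k\,\partial f/\partial \overline{z}_n$ after factoring out the common $ih$; and $g^{nn}$ supplies the $4\im(z_n)$ weights on the $\partial f/\partial z_n$ and $\partial f/\partial \overline{z}_n$ pieces that multiply $\partial/\partial\overline{z}_n$ and $\partial/\partial z_n$.

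The only real obstacle is the block-matrix inversion itself, together with noticing the telescoping cancellations (all driven by $h + |z'|^2 = \im(z_n)$) that leave $(g^{jk})$ in such a clean closed form. Once that is in hand the remainder is a routine expansion. As an alternative, one could instead invoke the fact, recorded just before the statement, that $\psi : D_n \rightarrow \B^n$ is a Bergman isometry, and push the ball formula \eqref{eq:Hamiltonian_Bn} forward via $\psi_*$, but keeping track of the chain rule for the Cayley transform turns out to be no shorter than the direct calculation above.
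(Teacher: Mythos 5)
Your proposal is correct and follows the same route as the paper: both specialize the general K\"ahler formula $X_f = i\sum_{j,k}g^{jk}\bigl(\partial_{z_k}f\,\partial_{\overline{z}_j} - \partial_{\overline{z}_j}f\,\partial_{z_k}\bigr)$, compute the inverse metric matrix $\bigl((g_{D_n})^{jk}\bigr) = (\im(z_n)-|z'|^2)\left(\begin{smallmatrix} I_{n-1} & 2i\overline{z}'^\top \\ -2iz' & 4\im(z_n)\end{smallmatrix}\right)$, and collect terms. The paper simply asserts this inverse as "a straightforward computation," whereas you supply the Sherman--Morrison/Schur-complement details (which check out); this is a difference of exposition, not of method.
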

\begin{proof}
    Similar to the case of the unit ball $\B^n$, as noted above in Equation~\eqref{eq:Hamiltonian_Bn}, the Hamiltonian vector field associated to $f$ is given by the following formula
    \[
        X_f(z)
            = i\sum_{j,k=1}^n (g_{D_n})^{jk}(z)
            \bigg(
            \frac{\partial f}{\partial z_k}
                \frac{\partial}{\partial\overline{z}_j}
            - \frac{\partial f}{\partial \overline{z}_j}
                \frac{\partial}{\partial z_k}
                \bigg),
    \]
    at every $z \in D_n$. A straightforward computation shows that the inverse matrix of $((g_{D_n})_{jk}(z))_{j,k=1}^n$ is given by
    \[
        ((g_{D_n})^{jk}(z))_{j,k=1}^n =
        (\im(z_n) - |z'|^2)
            \begin{pmatrix}
                I_{n-1} & 2i \overline{z}'^\top \\
                -2i z' & 4\im(z_n)
            \end{pmatrix},
    \]
    for every $z \in D_n$. Hence, the Hamiltonian vector field associated to $a$ is given by
    \begin{align*}
        X_f(z) &= \\
        =&\; i(\im(z_n) - |z'|^2) \Bigg[
            \sum_{k=1}^{n-1}
            \bigg(
            \frac{\partial f}{\partial z_k}
                \frac{\partial}{\partial\overline{z}_k}
            - \frac{\partial f}{\partial \overline{z}_k}
                \frac{\partial}{\partial z_k}
            \bigg)  \\
        &+ 2i \sum_{k=1}^{n-1} \overline{z}_k
            \bigg(
            \frac{\partial f}{\partial z_n}
                \frac{\partial}{\partial\overline{z}_k}
            - \frac{\partial f}{\partial \overline{z}_k}
                \frac{\partial}{\partial z_n}
            \bigg)
            -2i \sum_{k=1}^{n-1} z_k
            \bigg(
            \frac{\partial f}{\partial z_k}
                \frac{\partial}{\partial\overline{z}_n}
            - \frac{\partial f}{\partial \overline{z}_n}
                \frac{\partial}{\partial z_k}
            \bigg) \\
        &+ 4\im(z_n)
            \bigg(
            \frac{\partial f}{\partial z_n}
                \frac{\partial}{\partial\overline{z}_n}
            - \frac{\partial f}{\partial \overline{z}_n}
                \frac{\partial}{\partial z_n}
            \bigg)
        \Bigg],
    \end{align*}
    and this yields the result after collecting terms.
\end{proof}

Let $H$ be a connected Lie group with Lie algebra $\fh$ and assume that $H$ acts smoothly on either $\B^n$ or $D_n$. Then, for every $X \in \fh$ there is a smooth vector field given by
\[
    X^\sharp_z = \frac{\dif}{\dif s}\sVert[2]_{s=0} \exp(sX) z
\]
for every $z$ in the corresponding domain. If we consider $X^\sharp$ as a complex-valued function with component functions given by $X^\sharp = (f_1, \dots, f_n)$, then it is easy to see that the corresponding expression as a complex vector field is the following
\[
    X^\sharp = \sum_{j=1}^n
        \Big( f_j \frac{\partial}{\partial z_j} +
            \overline{f}_j \frac{\partial}{\partial \overline{z}_j}
            \Big).
\]

\begin{definition}\label{def:moment_map}
Let $D$ denote either $\B^n$ or $D_n$, and let $H$ be a connected Lie group acting smoothly on $D$ and preserving its K\"ahler form $\omega$. Let us denote by $\fh^*$ the real dual space of $\fh$. A moment map for the $H$-action on $D$ is a smooth function
\[
    \mu : D \rightarrow \fh^*
\]
that satisfies the following properties.
\begin{enumerate}
    \item For every $X \in \fh$, the smooth function $\mu_X : D \rightarrow \R$ defined by
        \[
            \mu_X(z) = \left<\mu(z),X\right>
        \]
        has Hamiltonian vector field given by $X^\sharp$. In other words, we have $X^\sharp = X_{\mu_X}$, for every $X \in \fh$.
    \item For every $h \in H$ we have $\mu \circ h = \Ad(h)^*\circ \mu$, where $\Ad$ is the adjoint representation of $H$ and $\Ad(h)^*$ denotes the transpose transformation of $\Ad(h)$.
\end{enumerate}
\end{definition}

If $H$ is an Abelian Lie group, then its adjoint representation satisfies $\Ad(h) = I_{\fh}$, the identity map on $\fh$, for every $h \in H$. Hence, for Abelian Lie groups the second condition in Definition~\ref{def:moment_map} is equivalent to the $H$-invariance of $\mu$, i.e.~it is the same as requiring that
\[
    \mu \circ h = \mu,
\]
for all $h \in H$.

\section{Maximal Abelian subgroups and their moment maps} \label{sec:MASG_momentmaps}
We will consider the pseudo-unitary Lie group with signature $(n,1)$ defined by
\[
    \SU(n,1) = \{ M \in \GL(n+1,\C) \mid M I_{n,1} \overline{M}^\top =
            I_{n,1},\; \det(M) = 1 \},
\]
where we have
\[
    \begin{pmatrix}
      I_n & 0 \\
      0 & -1
    \end{pmatrix}.
\]
We recall that $\SU(n,1)$ realizes the group of biholomorphisms of $\B^n$ through its action by linear fractional transformations
\[
    M\cdot z = \frac{A z + b}{c\cdot z +d}
\]
where the elements of $\B^n$ are taken as columns and we use the block decomposition
\[
    M =
    \begin{pmatrix}
      A & b \\
      c & d
    \end{pmatrix} \in \SU(n,1),
\]
such that $A$ has size $n\times n$ and $d \in \C$.

For the Siegel realization $D_n$ we consider the matrix
\[
    L_n =
    \begin{pmatrix}
      2 I_{n-1} & 0 & 0 \\
      0 & 0 & -i \\
      0 & i & 0
    \end{pmatrix},
\]
that yields the special pseudo-unitary Lie group
\[
    \SU(\C^{n+1}, L_n) =
        \{ M \in \GL(n+1,\C) \mid M L_n \overline{M}^\top = L_n,\;
            \det(M) = 1 \}.
\]
This realizes the group of biholomorphisms of $D_n$ through the action by linear fractional transformations
\[
    M \cdot z =
    \frac{(A z' + \alpha z_n + \beta, \gamma\cdot z' + az_n + b)}{\delta\cdot z' + cz_n + d}
\]
where the elements of $D_n$ are taken as columns and we use the block decomposition
\[
    M =
    \begin{pmatrix}
      A & \alpha & \beta \\
      \gamma & a & b \\
      \delta & c & d
    \end{pmatrix} \in \SU(\C^{n+1},L_n),
\]
where $A$ has size $(n-1)\times(n-1)$ and $a, d \in \C$.

It is well known that the group of biholomorphisms for either $\B^n$ or $D_n$ has exactly $n+2$ conjugacy classes of (connected) maximal Abelian subgroups, MASG for short. The following is a complete list of representatives of these classes.

\textbf{Quasi-elliptic}: This is denoted by $E(n)$ and it is given by the $\T^n$-action on $\B^n$
\[
    t \cdot z = (t_1 z_1, \dots, t_n z_n),
\]
where $t \in \T^n$ and $z \in \B^n$.

\textbf{Quasi-parabolic}: This is denoted by $P(n)$ and it is given by the $\T^{n-1}\times\R$-action on $D_n$
\[
    (t',h) \cdot z = (t' z', z_n +h)
\]
where $t' \in \T^{n-1}$, $h \in \R$ and $z \in D_n$.

\textbf{Quasi-hyperbolic}: This is denoted by $H(n)$ and it is given by the $\T^{n-1}\times\R_+$-action on $D_n$
\[
    (t',r) \cdot z = (r^\frac{1}{2} t' z', r z_n)
\]
where $t' \in \T^{n-1}$, $r \in \R_+$ and $z \in D_n$.

\textbf{Nilpotent}: This is denoted by $N(n)$ and it is given by the $\R^n$-action on $D_n$
\[
    (b,h) \cdot z = (z'+b, z_n + h + 2iz'\cdot b + i |b|^2)
\]
where $b \in \R^{n-1}$, $h \in \R$ and $z \in D_n$.

\textbf{Quasi-nilpotent}: For every integer $k = 1, \dots, n-2$ we have a group action denoted by $N(n,k)$ and that is given by
\[
    (t, b, h) \cdot z = (t z_{(1)}, z_{(2)} +b, z_n + h + 2iz_{(2)}\cdot b + i |b|^2)
\]
where $t \in \T^k$, $b \in \R^{n-k-1}$, $h \in \R$ and $z \in D_n$ is decomposed as $z = (z_{(1)}, z_{(2)}, z_n) \in \C^k\times\C^{n-k-1}\times\C$.

We observe that the notation in the last case corresponds to the decomposition in components associated to the partition of $n$ given by $(k,n-k-1,1)$. In particular, the last component could also be written as $z_n = z_{(3)}$. In the rest of this work, we will use the notation $z = (z_{(1)}, z_{(2)}, z_n)$ for this particular decomposition. Although the dependence on $k$ is not made explicit in such decomposition, it will always be used in connection to the action $N(n,k)$ and so there should not be any confusion.

We observe that the Lie algebra of all the groups involved in these actions is $\R^n$ in all cases, which is naturally identified with its dual $(\R^n)^*$ by the usual inner product. The description of the exponential maps of these groups can be reduced to three cases. The simplest one corresponds to the additive group $\R$, whose exponential map is just the identity. The exponential map for the multiplicative group $\R_+$ is the map $\R \rightarrow \R_+$ given by $s \mapsto e^s$. Finally, the exponential map for the circle group $\T$ is the map $\R \rightarrow \T$ given by $s \mapsto e^{is}$.

\subsection{Moment map for $E(n)$}
Let $X \in \R^n$ be given with components $X = (s_1, \dots, s_n)$. Then, for the group $\T^n$ we have the exponential
\[
    \exp(s X) = (e^{is s_1}, \dots, e^{is s_n}),
\]
for every $s \in \R$. Its action on a given $z \in \B^n$ is
\[
    \exp(s X)\cdot z = (e^{is s_1} z_1, \dots, e^{is s_n} z_n),
\]
for every $s \in \R$. And so the vector field $X^\sharp$ as a function $\B^n \rightarrow \C^n$ is given by
\[
    X^\sharp_z = (is_1 z_1, \dots, is_n z_n),
\]
for all $z \in \B^n$. Hence, we have the following expression of $X^\sharp$ as complex vector field
\[
    X^\sharp_z = i\sum_{j=1}^n \Big(s_j z_j \frac{\partial}{\partial z_j}
    - s_j \overline{z}_j \frac{\partial}{\partial \overline{z}_j}\Big)
\]
for all $z \in \B^n$.

\begin{proposition}\label{prop:momentmap_E(n)}
    The function given by
    \begin{align*}
        \mu : \B^n &\longrightarrow \R^n \\
        \mu(z) &= -\frac{1}{1-|z|^2}(|z_1|^2, \dots, |z_n|^2)
    \end{align*}
    is a moment map for the $E(n)$ action on $\B^n$.
\end{proposition}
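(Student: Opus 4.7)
The plan is to verify the two conditions from Definition~\ref{def:moment_map} directly. Since $E(n) = \T^n$ is Abelian, condition (2) reduces to $H$-invariance, which is immediate: for $t \in \T^n$ we have $|t_j z_j|^2 = |z_j|^2$ and $|t \cdot z|^2 = |z|^2$, so $\mu(t \cdot z) = \mu(z)$. All the content is in condition (1).

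For condition (1), fix $X = (s_1,\dots,s_n) \in \R^n$. The vector field $X^\sharp$ has already been computed just above the statement:
\[
    X^\sharp_z = i\sum_{j=1}^n \Big(s_j z_j \frac{\partial}{\partial z_j}
        - s_j \overline{z}_j \frac{\partial}{\partial \overline{z}_j}\Big).
\]
On the other hand, using the standard identification of $(\R^n)^*$ with $\R^n$, we have
\[
    \mu_X(z) = \langle \mu(z), X\rangle
        = -\frac{1}{1-|z|^2}\sum_{j=1}^n s_j |z_j|^2.
\]
The next step is to differentiate $\mu_X$. Writing $S = \sum_\ell s_\ell |z_\ell|^2$ and using the quotient rule, a direct computation gives
\[
    \frac{\partial \mu_X}{\partial z_k}
        = -\frac{\overline{z}_k \alpha_k}{(1-|z|^2)^2},
    \qquad
    \frac{\partial \mu_X}{\partial \overline{z}_j}
        = -\frac{z_j \alpha_j}{(1-|z|^2)^2},
\]
where $\alpha_k := s_k(1-|z|^2) + S$.

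The main step is to substitute these partials into the formula \eqref{eq:Hamiltonian_Bn} for $X_{\mu_X}$ and show that it equals $X^\sharp$. The key algebraic observation, which is where the particular form of $\mu$ earns its keep, is the identity
\[
    \sum_{k=1}^n |z_k|^2 \alpha_k
        = (1-|z|^2)S + |z|^2 S = S,
\]
so that $\alpha_j - \sum_k |z_k|^2\alpha_k = s_j(1-|z|^2)$. Using this, the coefficient of $\partial/\partial \overline{z}_j$ in $X_{\mu_X}$ collapses to
\[
    i(1-|z|^2)\sum_{k}(\delta_{jk} - \overline{z}_j z_k)
        \frac{\partial \mu_X}{\partial z_k}
    = -\frac{i\overline{z}_j}{1-|z|^2}
        \bigl(\alpha_j - \textstyle\sum_k |z_k|^2 \alpha_k\bigr)
    = -i s_j \overline{z}_j,
\]
matching $X^\sharp$. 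A symmetric computation handles the coefficient of $\partial/\partial z_k$, yielding $i s_k z_k$. This establishes $X^\sharp = X_{\mu_X}$ for every $X \in \R^n$, completing the verification.

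The only real obstacle is bookkeeping in the computation of the partial derivatives and the contraction with $(\delta_{jk} - \overline{z}_j z_k)$; the clean cancellation above shows that no additional structural argument is needed.
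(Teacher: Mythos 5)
Your proof is correct and follows essentially the same route as the paper's: compute $\mu_X$, take its Wirtinger derivatives, substitute into Equation~\eqref{eq:Hamiltonian_Bn}, and note the obvious $\T^n$-invariance. The only difference is that you carry out explicitly the cancellation $\sum_k |z_k|^2\alpha_k = S$ that the paper leaves as ``a straightforward replacement,'' and your computation checks out.
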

\begin{proof}
    For every $X \in \R^n$, with components $X = (s_1, \dots, s_n)$ as before, the function $\mu_X : \B^n \rightarrow \R$ given by
    \[
        \mu_X(z) = \left<\mu(z), X\right>
    \]
    for every $z \in \B^n$, satisfies
    \[
        \mu_X(z) = -\frac{1}{1-|z|^2}\sum_{j=1}^n s_j|z_j|^2
    \]
    for every $z \in \B^n$. Here, we have used the natural identification between $\R^n$ and its dual. Then, it is easy to see that
    \begin{align*}
        \frac{\partial \mu_X}{\partial z_j}(z) &=
            -\frac{s_j \overline{z}_j}{1-|z|^2}
            -\frac{\overline{z}_j}{(1-|z|^2)^2}\sum_{k=1}^n s_k |z_k|^2, \\
        \frac{\partial \mu_X}{\partial \overline{z}_j}(z) &=
            -\frac{s_j z_j}{1-|z|^2}
            -\frac{z_j}{(1-|z|^2)^2}\sum_{k=1}^n s_k |z_k|^2,
    \end{align*}
    for all $j = 1,\dots, n$. Hence, a straightforward replacement into Equation~\eqref{eq:Hamiltonian_Bn} shows that the Hamiltonian vector field of $\mu_X$ is $X^\sharp$. Since the function $\mu$ is clearly $\T^n$-invariant this proves the result.
\end{proof}

\subsection{Moment map for $P(n)$}
Let $X \in \R^n$ be given with components $X = (s_1, \dots, s_n)$. In this case, the exponential for the group $\T^{n-1}\times\R$ yields
\[
    \exp(s X) = (e^{is s_1}, \dots, e^{is s_{n-1}}, s s_n),
\]
for every $s \in \R$. Its action on a given $z \in D_n$ is now
\[
    \exp(s X)\cdot z
    = (e^{is s_1} z_1, \dots, e^{is s_{n-1}} z_{n-1}, z_n + s s_n),
\]
for every $s \in \R$. Hence, the vector field $X^\sharp$ as a function $D_n \rightarrow \C^n$ is given by
\[
    X^\sharp_z = (is_1 z_1, \dots, is_{n-1} z_{n-1}, s_n),
\]
for all $z \in D_n$. Hence, $X^\sharp$ as a complex vector field is given by the following expression
\[
    X^\sharp_z = i\sum_{j=1}^{n-1}\Big(s_j z_j \frac{\partial}{\partial z_j}
        - s_j \overline{z}_j \frac{\partial}{\partial \overline{z}_j} \Big)
        + s_n \Big(
        \frac{\partial}{\partial z_n}
            + \frac{\partial}{\partial \overline{z}_n}
        \Big)
\]
for all $z \in D_n$.

\begin{proposition}\label{prop:momentmap_P(n)}
    The function given by
    \begin{align*}
        \mu : D_n &\longrightarrow \R^n \\
        \mu(z) &= -\frac{1}{2(\im(z_n)-|z'|^2)}
                (2|z_1|^2, \dots, 2|z_{n-1}|^2, 1),
    \end{align*}
    is a moment map for the action $P(n)$ on $D_n$.
\end{proposition}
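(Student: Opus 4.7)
The plan is to mimic the argument used for $E(n)$ in Proposition~\ref{prop:momentmap_E(n)}, but now using the formula for the Hamiltonian vector field on $D_n$ provided by Lemma~\ref{lem:Hamiltonian_Dn}. Since $P(n) = \T^{n-1}\times\R$ is Abelian, condition (2) of Definition~\ref{def:moment_map} reduces to plain $P(n)$-invariance of $\mu$, so the proof splits naturally into two pieces: invariance and the Hamiltonian identity $X^\sharp = X_{\mu_X}$ for every $X \in \R^n$.

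The invariance of $\mu$ is immediate. Under the action $(t',h)\cdot z = (t'z', z_n + h)$ with $t' \in \T^{n-1}$ and $h \in \R$, one has $|t_j z_j|^2 = |z_j|^2$ for each $j = 1,\dots,n-1$, while $\im(z_n+h) = \im(z_n)$ because $h \in \R$. Hence the denominator $\im(z_n) - |z'|^2$ and the components $|z_j|^2$ appearing in $\mu$ are preserved, which gives $\mu \circ (t',h) = \mu$.

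For the Hamiltonian identity, fix $X = (s_1,\dots,s_n) \in \R^n$. Using the natural identification of $\R^n$ with its dual, the function $\mu_X$ takes the explicit form
\[
    \mu_X(z) = -\frac{1}{\im(z_n)-|z'|^2}\sum_{j=1}^{n-1} s_j |z_j|^2
        - \frac{s_n}{2(\im(z_n)-|z'|^2)}.
\]
I would then compute the partial derivatives $\partial \mu_X/\partial z_k$, $\partial \mu_X/\partial \overline{z}_k$ for $k=1,\dots,n-1$, and $\partial \mu_X/\partial z_n$, $\partial \mu_X/\partial \overline{z}_n$. Since $\rho(z) := \im(z_n) - |z'|^2$ satisfies $\partial\rho/\partial z_k = -\overline{z}_k$, $\partial\rho/\partial \overline{z}_k = -z_k$ for $k \le n-1$, together with $\partial\rho/\partial z_n = -i/2$ and $\partial\rho/\partial \overline{z}_n = i/2$, each derivative splits into a piece coming from differentiating the summands $s_j|z_j|^2$ and a piece proportional to $(z_k \text{ or } \overline{z}_k)/\rho^2$ (and similarly for the $z_n$-derivatives, with factors of $\pm i/2$). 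Substituting these into the formula of Lemma~\ref{lem:Hamiltonian_Dn} and collecting terms in the basis $\partial/\partial z_j$, $\partial/\partial \overline{z}_j$, I expect the $1/\rho^2$ contributions to cancel after using the prefactor $\rho = \im(z_n)-|z'|^2$ in Lemma~\ref{lem:Hamiltonian_Dn}. What should remain is exactly
\[
    X^\sharp_z = i\sum_{j=1}^{n-1}\Big(s_j z_j \frac{\partial}{\partial z_j}
        - s_j \overline{z}_j \frac{\partial}{\partial \overline{z}_j}\Big)
        + s_n\Big(\frac{\partial}{\partial z_n} + \frac{\partial}{\partial \overline{z}_n}\Big),
\]
as computed above from the exponential of $P(n)$.

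The main obstacle is purely bookkeeping: the Hamiltonian formula on $D_n$ contains cross terms mixing $\partial/\partial z_n, \partial/\partial \overline{z}_n$ with derivatives in the $z_k$-variables (the $\pm 2iz_k$ and $\pm 2i\overline{z}_k$ coefficients). One has to verify that, when the derivatives of $\mu_X$ (which themselves mix linearly the two sources $\sum s_j|z_j|^2$ and $s_n/2$) are substituted, the cross terms combine so that the coefficient of $\partial/\partial \overline{z}_n$ and $\partial/\partial z_n$ reduces to the constant $s_n$ (up to the expected sign), while the coefficients of $\partial/\partial z_k$, $\partial/\partial \overline{z}_k$ reduce to $\pm i s_k \overline{z}_k$, $\pm i s_k z_k$ with all other terms canceling. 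This cancellation is the substantive content of the verification, but it is the same style of direct computation that appears in Proposition~\ref{prop:momentmap_E(n)}.
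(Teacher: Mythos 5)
Your proposal is correct and follows essentially the same route as the paper: the paper also observes that $\T^{n-1}\times\R$-invariance is immediate and then verifies $X^\sharp = X_{\mu_X}$ by computing the Wirtinger derivatives of $\mu_X$ and substituting into Lemma~\ref{lem:Hamiltonian_Dn}, the only cosmetic difference being that the paper reduces by linearity to the canonical basis vectors $X_1,\dots,X_n$ rather than carrying a general $X=(s_1,\dots,s_n)$ through the computation.
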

\begin{proof}
    For every $X \in \R^n$ whose components are $X = (s_1, \dots, s_n)$, we have $\mu_X : D_n \rightarrow \R$ given by
    \begin{align*}
        \mu_X(z) &= \left<\mu(z), X\right> \\
            &= -\frac{1}{2(\im(z_n)-|z'|^2)}
                \bigg(
                    \sum_{j=1}^{n-1} 2 s_j |z_j|^2 + s_n
                \bigg).
    \end{align*}
    Since $\mu$ is clearly $\T^{n-1}\times\R$-invariant, it is enough to show that $X^\sharp$ is the Hamiltonian vector field of $\mu_X$, for all $X \in \R^n$. In other words that
    \[
        X^\sharp = X_{\mu_X}
    \]
    for all $X \in \R^n$. Since both sides of the last expression are linear in $X$, it is enough to check this on the canonical basis of $\R^n$ which we will denote by $X_1, \dots, X_n$. In other words, if we denote $\mu_j = \mu_{X_j}$ ($j=1, \dots, n$), then it is enough to show that $X_j^\sharp$ is the Hamiltonian vector field of $\mu_j$ for all $j = 1, \dots, n$. These functions have the following expressions
    \[
    \mu_j(z) =
    \begin{cases}
        \displaystyle
        -\frac{|z_j|^2}{\im(z_n) - |z'|^2}, & \mbox{if } j=1,\dots,n-1 \\
        \displaystyle
        -\frac{1}{2(\im(z_n) - |z'|^2)}, & \mbox{if } j=n.
    \end{cases}
    \]
    for all $z \in D_n$. And from the previous computations we have
    \[
    X_j^\sharp(z) =
    \begin{cases}
        \displaystyle
        iz_j \frac{\partial}{\partial z_j}
            - i\overline{z}_j \frac{\partial}{\partial \overline{z}_j}, & \mbox{if } j=1,\dots,n-1 \\
        \displaystyle
        \frac{\partial}{\partial z_n}
            + \frac{\partial}{\partial \overline{z}_n}, & \mbox{if } j=n.
    \end{cases}
    \]
    So, with this information, we need to check that $X_{\mu_j} = X_j^\sharp$ for all $j =1, \dots, n$.

    First we observe that for $j = 1, \dots, n-1$ we have
    \begin{align*}
    \frac{\partial \mu_j}{\partial z_k}(z) &=
    \begin{cases}
        \displaystyle
        -\frac{\delta_{jk}\overline{z}_k}{\im(z_n) - |z'|^2}
        -\frac{|z_j|^2\overline{z}_k}{(\im(z_n) - |z'|^2)^2},
            & \mbox{if } k=1,\dots,n-1 \\
        \displaystyle
        \frac{|z_j|^2}{2i(\im(z_n) - |z'|^2)^2}, & \mbox{if } k=n,
    \end{cases} \\
    \frac{\partial \mu_j}{\partial \overline{z}_k}(z) &=
    \begin{cases}
        \displaystyle
        -\frac{\delta_{jk} z_k}{\im(z_n) - |z'|^2}
        -\frac{|z_j|^2 z_k}{(\im(z_n) - |z'|^2)^2},
            & \mbox{if } k=1,\dots,n-1  \\
        \displaystyle
        -\frac{|z_j|^2}{2i(\im(z_n) - |z'|^2)^2}, & \mbox{if } k=n.
    \end{cases}
    \end{align*}
    On the other hand, for the function $\mu_n$ we have
    \begin{align*}
    \frac{\partial \mu_n}{\partial z_k}(z) &=
    \begin{cases}
        \displaystyle
        -\frac{\overline{z}_k}{2(\im(z_n) - |z'|^2)^2},
            & \mbox{if } k=1,\dots,n-1 \\
        \displaystyle
        \frac{1}{4i(\im(z_n) - |z'|^2)^2}, & \mbox{if } k=n,
    \end{cases} \\
    \frac{\partial \mu_n}{\partial \overline{z}_k}(z) &=
    \begin{cases}
        \displaystyle
        -\frac{z_k}{2(\im(z_n) - |z'|^2)^2},
            & \mbox{if } k=1,\dots,n-1 \\
        \displaystyle
        -\frac{1}{4i(\im(z_n) - |z'|^2)^2}, & \mbox{if } k=n.
    \end{cases}
    \end{align*}
    After substituting these expressions into the formula from Lemma~\ref{lem:Hamiltonian_Dn} it is easy to conclude the required identities.
\end{proof}

\subsection{Moment map for $H(n)$}
Let $X \in \R^n$ with components $X = (s_1, \dots, s_n)$. Then, for the group $\T^{n-1}\times\R_+$ we have the exponential
\[
    \exp(sX) = (e^{is s_1}, \dots, e^{is s_{n-1}}, e^{s s_n}),
\]
for every $s \in \R$. The action of this element on $z \in D_n$ is given by
\[
    \exp(sX)\cdot z =
        (e^{s s_n/2}e^{is s_1}z_1, \dots, e^{s s_n/2}e^{is s_{n-1}}z_{n-1},
        e^{s s_n}z_n),
\]
for every $s \in \R$. It follows that we have
\[
    X^\sharp_z = \bigg(
    \Big(\frac{s_n}{2} + is_1\Big) z_1, \dots,
    \Big(\frac{s_n}{2} + is_{n-1}\Big)z_{n-1}, s_n z_n
    \bigg)
\]
for every $z \in D_n$, as a function $D_n \rightarrow \C$. Hence, the complex vector field corresponding to $X$ is given by
\[
    X^\sharp_z =
        \sum_{j=1}^{n-1}\bigg(
            \Big(\frac{s_n}{2} + i s_j\Big)z_j \frac{\partial}{\partial z_j}
            +
            \Big(\frac{s_n}{2} - i s_j\Big)\overline{z}_j
            \frac{\partial}{\partial \overline{z}_j}
            \bigg)
        + s_n \bigg(
        z_n \frac{\partial}{\partial z_n}
            + \overline{z}_n \frac{\partial}{\partial \overline{z}_n}
        \bigg),
\]
for all $z \in D_n$.

\begin{proposition}\label{prop:momentmap_H(n)}
    The function given by
    \begin{align*}
        \mu : D_n &\longrightarrow \R^n \\
        \mu(z) &= -\frac{1}{2(\im(z_n) - |z'|^2)}
            (2|z_1|^2, \dots, 2|z_{n-1}|^2, \re(z_n))
    \end{align*}
    is a moment map for the action $H(n)$ on $D_n$.
\end{proposition}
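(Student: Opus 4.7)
The plan is to follow the same template as the proofs of Propositions~\ref{prop:momentmap_E(n)} and \ref{prop:momentmap_P(n)}. The first step will be to check that $\mu$ is $H(n)$-invariant, which by the remark following Definition~\ref{def:moment_map} discharges the second condition in the definition of a moment map. The torus factor $\T^{n-1}$ fixes each $|z_k|^2$ and fixes $z_n$ entirely, hence preserves every coordinate of $\mu$. Under the $\R_+$-action $(r,z) \mapsto (r^{1/2} z', r z_n)$ one has $|z_k|^2 \mapsto r|z_k|^2$, $|z'|^2 \mapsto r|z'|^2$, $\re(z_n) \mapsto r \re(z_n)$ and $\im(z_n) \mapsto r \im(z_n)$, so the factor $r$ cancels between numerator and denominator in each coordinate.

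Next, I will verify the Hamiltonian condition $X^\sharp = X_{\mu_X}$ for every $X \in \R^n$; by $\R$-linearity it suffices to check the canonical basis $X_1, \ldots, X_n$. Writing $\mu_j = \mu_{X_j}$, we find
\[
\mu_j(z) =
\begin{cases}
\displaystyle -\frac{|z_j|^2}{\im(z_n) - |z'|^2} & \text{if } j = 1, \ldots, n-1, \\
\displaystyle -\frac{\re(z_n)}{2(\im(z_n) - |z'|^2)} & \text{if } j = n,
\end{cases}
\]
and from the formula for $X^\sharp$ obtained above,
\[
X_j^\sharp =
\begin{cases}
\displaystyle i z_j \frac{\partial}{\partial z_j} - i \overline{z}_j \frac{\partial}{\partial \overline{z}_j} & \text{if } j = 1, \ldots, n-1, \\
\displaystyle \sum_{k=1}^{n-1} \frac{1}{2}\Big( z_k \frac{\partial}{\partial z_k} + \overline{z}_k \frac{\partial}{\partial \overline{z}_k}\Big) + z_n \frac{\partial}{\partial z_n} + \overline{z}_n \frac{\partial}{\partial \overline{z}_n} & \text{if } j = n.
\end{cases}
\]
For $j = 1, \ldots, n-1$ both $\mu_j$ and $X_j^\sharp$ coincide with the corresponding objects of Proposition~\ref{prop:momentmap_P(n)} (the term $s_n/2$ in the general formula for $X^\sharp$ vanishes when $s_n = 0$), so $X_{\mu_j} = X_j^\sharp$ for these indices is inherited directly from the already-proved quasi-parabolic case.

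The genuinely new work is the case $j = n$. For that one writes $\mu_n = -(z_n + \overline{z}_n)/(4(\im(z_n) - |z'|^2))$, computes the Wirtinger derivatives $\partial \mu_n/\partial z_k$ and $\partial \mu_n/\partial \overline{z}_k$ for $k = 1, \ldots, n$, and substitutes them into the formula of Lemma~\ref{lem:Hamiltonian_Dn}. The main obstacle is the bookkeeping: whereas the quasi-parabolic translation did not touch $z'$, the hyperbolic dilation scales every coordinate, so $X_n^\sharp$ contains the transversal piece $\tfrac{1}{2}\sum_{k=1}^{n-1}(z_k \partial/\partial z_k + \overline{z}_k \partial/\partial \overline{z}_k)$ which must be recovered from the $|z'|^2$-dependence of the denominator of $\mu_n$. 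The key simplification I expect to use is the identity $\im(z_n) + i \re(z_n) = i \overline{z}_n$ together with its conjugate, which, combined with the recombination of $\im(z_n) - |z'|^2$ in the denominator, make the bracketed expressions in Lemma~\ref{lem:Hamiltonian_Dn} collapse to $\overline{z}_n$, $z_n$, $\overline{z}_k/2$, and $z_k/2$ in the coefficients of $\partial/\partial \overline{z}_n$, $\partial/\partial z_n$, $\partial/\partial \overline{z}_k$, and $\partial/\partial z_k$ respectively, thereby matching $X_n^\sharp$ term by term.
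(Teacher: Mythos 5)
Your proposal is correct and follows essentially the same route as the paper: verify $\T^{n-1}\times\R_+$-invariance, reduce to the canonical basis, inherit the cases $j=1,\dots,n-1$ from the quasi-parabolic proposition, and for $j=n$ compute the Wirtinger derivatives of $\mu_n$ and substitute into Lemma~\ref{lem:Hamiltonian_Dn}. The coefficients you predict ($\overline{z}_n$, $z_n$, $\overline{z}_k/2$, $z_k/2$) are exactly what that substitution yields, so the computation closes as claimed.
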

\begin{proof}
    Note that $\mu$ is clearly $\T^{n-1}\times\R_+$-invariant.

    As before, for $X = (s_1, \dots, s_n) \in \R^n$ we have the function $\mu_X : D_n \rightarrow \R$
    \begin{align*}
        \mu_X(z) &= \left<\mu(z), X\right> \\
         &= -\frac{1}{2(\im(z_n) - |z'|^2)}
            \bigg(\sum_{j=1}^{n-1}
                2 s_j |z_j|^2 + s_n \re(z_n).
            \bigg).
    \end{align*}

    As in the proof of Proposition~\ref{prop:momentmap_P(n)} it is enough to show that for the canonical basis $X_1, \dots, X_n$ of $\R^n$ we have
    \[
        X_j^\sharp = X_{\mu_j}
    \]
    where $\mu_j = \mu_{X_j}$, for all $j = 1, \dots, n$. In this case, we have the functions
    \[
    \mu_j(z) =
    \begin{cases}
        \displaystyle
        -\frac{|z_j|^2}{\im(z_n) - |z'|^2}, & \mbox{if } j=1,\dots,n \\
        \displaystyle
        -\frac{\re(z_n)}{2(\im(z_n) - |z'|^2)}, & \mbox{if } j=n,
    \end{cases}
    \]
    as well as the vector fields
    \[
    X_j^\sharp(z) =
    \begin{cases}
        \displaystyle
        iz_j \frac{\partial}{\partial z_j}
            - i\overline{z}_j \frac{\partial}{\partial \overline{z}_j}, & \mbox{if } j=1,\dots,n-1 \\
        \displaystyle
        \frac{1}{2} \sum_{j=1}^{n-1}
        \Big(z_j \frac{\partial}{\partial z_j}
            + \overline{z}_j \frac{\partial}{\partial \overline{z}_j}
        \Big) +
        \Big(z_n \frac{\partial}{\partial z_n}
            + \overline{z}_n \frac{\partial}{\partial \overline{z}_n}
        \Big), & \mbox{if } j=n.
    \end{cases}
    \]
    We observe that the proof of Proposition~\ref{prop:momentmap_P(n)} has already established that $X_j^\sharp = X_{\mu_j}$ for $j = 1, \dots, n-1$. Hence, it remains to prove that $X_n^\sharp = X_{\mu_n}$.

    For the function $\mu_n$ we have
    \begin{align*}
    \frac{\partial \mu_n}{\partial z_k}(z) &=
    \begin{cases}
        \displaystyle
        -\frac{\re(z_n) \overline{z}_k}{2(\im(z_n) - |z'|^2)^2},
            & \mbox{if } k=1,\dots,n-1 \\
        \displaystyle
        -\frac{1}{4(\im(z_n) - |z'|^2)}
        +\frac{\re(z_n)}{4i(\im(z_n) - |z'|^2)^2}, & \mbox{if } k=n,
    \end{cases} \\
    \frac{\partial \mu_n}{\partial \overline{z}_k}(z) &=
    \begin{cases}
        \displaystyle
        -\frac{\re(z_n) z_k}{2(\im(z_n) - |z'|^2)^2},
            & \mbox{if } k=1,\dots,n-1 \\
        \displaystyle
        -\frac{1}{4(\im(z_n) - |z'|^2)}
        -\frac{\re(z_n)}{4i(\im(z_n) - |z'|^2)^2}, & \mbox k=n.
    \end{cases}
    \end{align*}
    Using this expressions we can apply the formula from Lemma~\ref{lem:Hamiltonian_Dn} to complete the proof.
\end{proof}

\subsection{Moment map for $N(n)$}
Let $X = (s_1, \dots, s_n) \in \R^n$ be given. In this case the exponential map in $\R^n$ gives
\[
    \exp(s X) = (s s_1, \dots, s s_n),
\]
for all $s \in \R$. Its action on a given $z \in D_n$ has the expression
\[
    \exp(s X)\cdot z =
    \bigg(z_1 + s s_1, \dots, z_{n-1} + s s_{n-1},
        z_n + s s_n + 2i \sum_{j=1}^{n-1} s s_j z_j
            + i\sum_{j=1}^{n-1} s^2 s_j^2 \bigg).
\]
And from this it follows that, as a function $D_n \rightarrow \C^n$, we have
\[
    X_z^\sharp =
        \bigg(
            s_1, \dots, s_{n-1},
            s_n + 2i \sum_{j=1}^{n-1} s_j z_j
        \bigg),
\]
for every $z \in D_n$. Its expression as a complex vector field is
\[
    X_z^\sharp =
    \sum_{j=1}^{n-1} s_j \Big(\frac{\partial}{\partial z_j}
        + \frac{\partial}{\partial \overline{z}_j} \Big) +
        \bigg(
            s_n + 2i \sum_{j=1}^{n-1} s_j z_j
        \bigg) \frac{\partial}{\partial z_n} +
        \bigg(
            s_n - 2i \sum_{j=1}^{n-1} s_j \overline{z}_j
        \bigg) \frac{\partial}{\partial \overline{z}_n},
\]
for all $z \in D_n$.

\begin{proposition}\label{prop:momentmap_N(n)}
    The function given by
    \begin{align*}
        \mu : D_n &\longrightarrow \R^n \\
        \mu(z) &= -\frac{1}{2(\im(z_n) - |z'|^2)}
            (-4\im(z'), 1)
    \end{align*}
    is a moment map for the action of $N(n)$ on $D_n$.
\end{proposition}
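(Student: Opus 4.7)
The plan is to follow the same template as the proofs of Propositions~\ref{prop:momentmap_P(n)} and~\ref{prop:momentmap_H(n)}: first verify $N(n)$-invariance of $\mu$ (condition~(2) of Definition~\ref{def:moment_map}), and then check the Hamiltonian condition $X^\sharp = X_{\mu_X}$ by linearity on the canonical basis of $\R^n$. For invariance, I would use the action formula: if $(b,h) \in \R^{n-1}\times\R$ and $w = (b,h)\cdot z$, then $w' = z' + b$ gives $\im(w') = \im(z')$ because $b$ is real, and a short expansion yields
\[
    \im(w_n) - |w'|^2 = \im(z_n) + 2\re(z'\cdot b) + |b|^2 - |z'|^2 - 2\re(z'\cdot b) - |b|^2 = \im(z_n) - |z'|^2.
\]
Hence both components of $\mu$ are $N(n)$-invariant.

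By linearity in $X$, it suffices to verify condition~(1) on the canonical basis $X_1, \ldots, X_n$. Writing $\mu_j = \mu_{X_j}$ and pairing against $\mu$ gives
\[
    \mu_j(z) = \frac{2\im(z_j)}{\im(z_n) - |z'|^2} \quad (j=1,\ldots,n-1), \qquad \mu_n(z) = -\frac{1}{2(\im(z_n) - |z'|^2)},
\]
while the computations preceding the proposition yield
\[
    X_j^\sharp = \frac{\partial}{\partial z_j} + \frac{\partial}{\partial \overline{z}_j} + 2iz_j \frac{\partial}{\partial z_n} - 2i\overline{z}_j \frac{\partial}{\partial \overline{z}_n} \quad (j<n), \qquad X_n^\sharp = \frac{\partial}{\partial z_n} + \frac{\partial}{\partial \overline{z}_n}.
\]
The case $j=n$ is already settled, since $\mu_n$ and $X_n^\sharp$ coincide exactly with the $n$-th function and vector field from Proposition~\ref{prop:momentmap_P(n)}, whose proof has established $X_n^\sharp = X_{\mu_n}$. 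So only the cases $j = 1, \ldots, n-1$ require new work.

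For those, I would compute the four partial derivatives $\partial \mu_j/\partial z_k$, $\partial \mu_j/\partial \overline{z}_k$ (for $k<n$) and $\partial \mu_j/\partial z_n$, $\partial \mu_j/\partial \overline{z}_n$, using $\im(z_j) = (z_j - \overline{z}_j)/(2i)$ to keep the algebra uniform, and then substitute into the formula of Lemma~\ref{lem:Hamiltonian_Dn}. The key observation guiding the bookkeeping is that each partial of $\mu_j$ splits into a $\delta_{jk}$ term and an $\im(z_j)$ term with denominator $(\im(z_n)-|z'|^2)^2$. After multiplication by the overall $\im(z_n) - |z'|^2$ prefactor in Lemma~\ref{lem:Hamiltonian_Dn}, the $\delta_{jk}$ contribution in the diagonal $(k=1,\ldots,n-1)$ slot produces the $\partial/\partial z_j + \partial/\partial \overline{z}_j$ part of $X_j^\sharp$, while the same $\delta_{jk}$ contribution fed through the off-diagonal $2i\overline{z}_k$ and $-2iz_k$ couplings to the $\partial/\partial \overline{z}_n$ and $\partial/\partial z_n$ slots produces exactly $2iz_j \partial/\partial z_n - 2i\overline{z}_j \partial/\partial \overline{z}_n$. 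The main obstacle is verifying that all remaining terms involving $\im(z_j)$ cancel: these arise from three independent sources (the diagonal $n-1$ block, the $4\im(z_n)$ scalar in the $n$-slot, and the two conjugate cross-couplings), and the cancellation depends on the identity $\im(z_n) = |z'|^2 + (\im(z_n) - |z'|^2)$ being used to recombine $z_k\overline{z}_k$ terms with the prefactor. Once that cancellation is verified, the identity $X_j^\sharp = X_{\mu_j}$ follows, completing the proof.
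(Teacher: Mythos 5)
Your proposal is correct and follows essentially the same route as the paper's proof: check $N(n)$-invariance (which the paper merely asserts and you make explicit), reduce the Hamiltonian condition to the canonical basis by linearity, dispose of the $j=n$ case by citing the quasi-parabolic computation, and verify $j=1,\dots,n-1$ by substituting the partial derivatives of $\mu_j$ into Lemma~\ref{lem:Hamiltonian_Dn}. The formulas you record for $\mu_j$ and $X_j^\sharp$ agree with the paper's, and the cancellation mechanism you describe (recombining the $\im(z_j)$ terms via $\im(z_n)=|z'|^2+(\im(z_n)-|z'|^2)$, with the residual $\mp 4\im(z_j)$ converting $z_j$ into $\overline{z}_j$ in the $\partial/\partial z_n$, $\partial/\partial\overline{z}_n$ slots) is exactly what happens.
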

\begin{proof}
    First we observe that the function $\mu$ is easily seen to be $\R^n$-invariant.

    Let $X = (s_1, \dots, s_n) \in \R^n$ be given. Then, the corresponding function $\mu_X : D_n \rightarrow \R$ has the expression
    \begin{align*}
        \mu_X(z) &= \left<\mu(z), X \right> \\
         &= \frac{1}{2(\im(z_n) - |z'|^2)}\bigg(
         \sum_{j=1}^{n-1} 4s_j \im(z_j) - s_n
         \bigg)
    \end{align*}

    As before, we consider the canonical basis $X_1, \dots, X_n$ and prove that $X_j^\sharp = X_{\mu_j}$, for all $j=1, \dots, n$, where $\mu_j = \mu_{X_j}$. In this case we have the functions
    \[
        \mu_j(z) =
        \begin{cases}
            \displaystyle
            \frac{2\im(z_j)}{\im(z_n) - |z'|^2}, & \mbox{if }
                        j=1, \dots, n-1 \\
            \displaystyle
            -\frac{1}{2(\im(z_n) - |z'|^2)}, & \mbox{if } j=n,
        \end{cases}
    \]
    and the vector fields
    \[
        X_j^\sharp(z) =
        \begin{cases}
            \displaystyle
            \frac{\partial}{\partial z_j}
                + \frac{\partial}{\partial \overline{z}_j} +
                2i\bigg(
                    z_j\frac{\partial}{\partial z_n}
                    - \overline{z}_j \frac{\partial}{\partial \overline{z}_n}
                \bigg),
                & \mbox{if } j=1,\dots,n-1 \\
            \displaystyle
            \frac{\partial}{\partial z_n} +
                \frac{\partial}{\partial \overline{z}_n}, & \mbox{if } j=n.
        \end{cases}
    \]
    In particular, the identity
    \[
        X_n^\sharp = X_{\mu_n}
    \]
    follows from the proof of Proposition~\ref{prop:momentmap_P(n)}. Hence, it remains to consider the cases for $j = 1, \dots, n-1$. For these we note that
    \begin{align*}
        \frac{\partial \mu_j}{\partial z_k}(z) &=
        \begin{cases}
            \displaystyle
            \frac{\delta_{jk}}{i(\im(z_n) - |z'|^2)}
            + \frac{2\im(z_j) \overline{z}_k}{(\im(z_n) - |z'|^2)^2},
                & \mbox{if } k=1, \dots, n-1 \\
            \displaystyle
            -\frac{\im(z_j)}{i(\im(z_n) - |z'|^2)^2}, & \mbox{if } j=n,
        \end{cases} \\
        \frac{\partial \mu_j}{\partial \overline{z}_k}(z) &=
        \begin{cases}
            \displaystyle
            -\frac{\delta_{jk}}{i(\im(z_n) - |z'|^2)}
                +\frac{2\im(z_j) z_k}{(\im(z_n) - |z'|^2)^2},
                & \mbox{if }  k=1, \dots, n-1 \\
            \displaystyle
            \frac{\im(z_j)}{i(\im(z_n) - |z'|^2)}, & \mbox{if } j=n.
        \end{cases}
    \end{align*}
    As before, a straightforward replacement in the formula from Lemma~\ref{lem:Hamiltonian_Dn} completes the proof.
\end{proof}

\subsection{Moment map for $N(n,k)$}
Let $X = (s_1, \dots, s_n) \in \R^n$. For the group corresponding to this action we have the exponential
\[
    \exp(sX) = (e^{iss_1}, \dots, e^{iss_k}, ss_{k+1}, \dots, ss_n)
\]
for every $s \in \R$. Its action on any $z \in D_n$ is given by
\begin{multline*}
    \exp(sX)\cdot z =
    \bigg(
        e^{iss_1}z_1, \dots, e^{iss_k}z_k, z_{k+1}+ss_{k+1}, \dots, z_{n-1}+ss_{n-1}, \\
        z_n + ss_n + 2i \sum_{j=k+1}^{n-1} ss_jz_j
        +i \sum_{j=k+1}^{n-1} s^2 s_j^2
    \bigg).
\end{multline*}
Hence, as a function $D_n \rightarrow \C^n$ we have
\[
    X_z^\sharp =
    \bigg(
        is_1z_1, \dots, is_kz_k, s_{k+1}, \dots, s_{n-1},
        s_n + 2i \sum_{j=k+1}^{n-1} s_jz_j
    \bigg),
\]
and the expression of this as a complex vector field is the following
\begin{multline*}
    X_z^\sharp =
        i\sum_{j=1}^k \Big(
            s_j z_j \frac{\partial}{\partial z_j}
            - s_j \overline{z}_j \frac{\partial}{\partial \overline{z}_j}
        \Big) +
        \sum_{j=k+1}^{n-1} s_j \Big(
            \frac{\partial}{\partial z_j}
            + \frac{\partial}{\partial \overline{z}_j}
        \Big) \\
        + \bigg(
            s_n + 2i \sum_{j=k+1}^{n-1} s_j z_j
        \bigg) \frac{\partial}{\partial z_n}
        + \bigg(
            s_n - 2i \sum_{j=k+1}^{n-1} s_j \overline{z}_j
        \bigg) \frac{\partial}{\partial \overline{z}_n}
\end{multline*}
for every $z \in D_n$.

\begin{proposition}\label{prop:momentmap_N(n,k)}
    The function given by
    \begin{align*}
        \mu : D_n &\rightarrow \R^n \\
        \mu(z) &= -\frac{1}{2(\im(z_n) - |z'|^2)}
            (2|z_1|^2, \dots, 2|z_k|^2, -4\im(z_{(2)}), 1)
    \end{align*}
    is a moment map for the action $N(n,k)$ on $D_n$.
\end{proposition}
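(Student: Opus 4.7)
The plan is to follow exactly the template established in the proofs of Propositions~\ref{prop:momentmap_P(n)} and~\ref{prop:momentmap_N(n)}, exploiting the hybrid nature of the $N(n,k)$ action: on the first $k$ complex coordinates the action is quasi-elliptic (as in $P(n)$ restricted to its toral part), on the middle coordinates $z_{(2)}$ it is nilpotent (as in $N(n)$), and the last coordinate is a common translation direction. Because the proposed $\mu$ is split into blocks of coordinates that match precisely the components that appear in the previously computed moment maps, essentially no new computation should be required.

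First I would check that $\mu$ is $N(n,k)$-invariant. The denominator $\im(z_n) - |z'|^2$ is preserved by every transformation in $N(n,k)$; the squared moduli $|z_j|^2$ for $j=1,\dots,k$ are invariant under the $\T^k$-rotation on $z_{(1)}$ and are unaffected by the nilpotent block acting on the remaining coordinates; and the components $-4\im(z_j)$ for $j = k+1,\dots,n-1$ together with the constant $1$ are invariant under the nilpotent part exactly as in the proof of Proposition~\ref{prop:momentmap_N(n)}. Since $N(n,k)$ is Abelian, this invariance discharges the second condition in Definition~\ref{def:moment_map}.

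Next, by linearity in $X$ of both $X^\sharp$ and $X_{\mu_X}$, it suffices to verify the Hamiltonian identity $X_j^\sharp = X_{\mu_j}$ on the canonical basis $X_1,\dots,X_n$ of $\R^n$, where $\mu_j = \mu_{X_j}$. A direct reading of the formula for $\mu$ gives
\[
    \mu_j(z) =
    \begin{cases}
        -|z_j|^2/(\im(z_n) - |z'|^2), & j = 1,\dots,k, \\
        2\im(z_j)/(\im(z_n) - |z'|^2), & j = k+1,\dots,n-1, \\
        -1/(2(\im(z_n) - |z'|^2)), & j = n,
    \end{cases}
\]
and the expressions for $X_j^\sharp$ computed above show that in each of these three ranges both $\mu_j$ and $X_j^\sharp$ coincide with the corresponding objects from earlier propositions: the range $j = 1,\dots,k$ reproduces the toral part of Proposition~\ref{prop:momentmap_P(n)}, the range $j = k+1,\dots,n-1$ reproduces the translation part of Proposition~\ref{prop:momentmap_N(n)}, and the case $j = n$ is the common last coordinate, already handled in either of those two proofs.

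Consequently, the required identities are immediate from the already verified computations, and the proof amounts to citing Propositions~\ref{prop:momentmap_P(n)} and~\ref{prop:momentmap_N(n)} on the appropriate index ranges. The only real obstacle is bookkeeping, namely making sure that the partition of indices into the three blocks $\{1,\dots,k\}$, $\{k+1,\dots,n-1\}$, $\{n\}$ is matched consistently between $\mu_j$ and $X_j^\sharp$; once this is done, no new derivative calculation is necessary and the proof closes.
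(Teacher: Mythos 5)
Your proposal is correct and follows essentially the same route as the paper: the paper likewise verifies the $\T^k\times\R^{n-k}$-invariance, reduces by linearity to the canonical basis, splits the indices into the three blocks $\{1,\dots,k\}$, $\{k+1,\dots,n-1\}$, $\{n\}$, and observes that the required identities $X_j^\sharp = X_{\mu_j}$ are already contained in the computations for the quasi-parabolic and nilpotent cases. No discrepancy to report.
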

\begin{proof}
    The function $\mu$ is clearly $\T^k\times\R^{n-k}$-invariant.

    For $X = (s_1, \dots, s_n) \in \R^n$, the corresponding function $\mu_X : D_n \rightarrow \R^n$ is given by
    \begin{align*}
        \mu_X(z) &= \left<\mu(z), X\right> \\
            &= -\frac{1}{2(\im(z_n) - |z'|^2)}
                \bigg(
                    2\sum_{j=1}^k s_j |z_j|^2
                    -4\sum_{j=k+1}^{n-1} s_j \im(z_j)
                    +s_n
                \bigg).
    \end{align*}
    With respect to the canonical basis $X_1, \dots, X_n$, we have the following function $\mu_j = \mu_{X_j}$
    \[
        \mu_j(z) =
        \begin{cases}
            \displaystyle
            -\frac{|z_j|^2}{\im(z_n) - |z'|^2}, & \mbox{if } j=1,\dots,k \\
            \displaystyle
            \frac{2\im(z_j)}{\im(z_n) - |z'|^2}, & \mbox{if } j=k+1,\dots,n-1 \\
            \displaystyle
            -\frac{1}{2(\im(z_n) - |z'|^2)}, & \mbox{if } j=n,
        \end{cases}
    \]
    with corresponding vector fields
    \[
        X_j^\sharp(z) =
        \begin{cases}
            \displaystyle
            iz_j\frac{\partial}{\partial z_j}
            - i\overline{z}_j \frac{\partial}{\partial \overline{z}_j}, & \mbox{if } j=1,\dots,k \\
            \displaystyle
            \frac{\partial}{\partial z_j}
            + \frac{\partial}{\partial \overline{z}_j}
            +2i\bigg(
                z_j\frac{\partial}{\partial z_n}
                -\overline{z}_j\frac{\partial}{\partial \overline{z}_n}
            \bigg), & \mbox{if } j=k+1,\dots,n-1 \\
            \displaystyle
            \frac{\partial}{\partial z_n}
            + \frac{\partial}{\partial \overline{z}_n}, & \mbox{if } j=n.
        \end{cases}
    \]
    As before, it is enough to show that
    \[
        X_j^\sharp = X_{\mu_j}
    \]
    for all $j=1, \dots, n$. This computation is contained in the proof of the previous cases.
\end{proof}

\section{Commuting Toeplitz operators and moment maps. The MASG case.}
\label{sec:commToep_momentmaps_MASG}
It was proved in \cite{QVUnitBall1} and \cite{QVUnitBall2} that every MASG of the group of automorphisms of either $\B^n$ or $D_n$ yields a commutative $C^*$-algebra generated by Toeplitz operators obtained by considering the symbols invariant under the given MASG. We now state this more precisely.

As before, $D$ will denote either of the domains $\B^n$ or $D_n$ with corresponding weighted Bergman spaces $\cA^2_\lambda(D)$, where $\lambda > -1$. For a given MASG $H$ of the group of biholomorphisms of $D$ we will denote by $L^\infty(D)^H$ the space of $H$-invariant essentially bounded symbols on $D$. The next result can be found in \cite{QVUnitBall1}.

\begin{theorem}\label{thm:Toeplitz_H-invariant-QV}
    Let $D$ denote either of the domains $\B^n$ or $D_n$ and let $H$ be a MASG of the group of biholomorphisms of $D$. Then, the $C^*$-algebra $\cT^{(\lambda)}(L^\infty(D)^H)$ generated by Toeplitz operators acting on $\cA^2_\lambda(D)$ with symbols in $L^\infty(D)^H$ is commutative for every $\lambda > -1$.
\end{theorem}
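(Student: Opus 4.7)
The plan is to reduce commutativity to a joint diagonalization statement on each weighted Bergman space. For each of the five conjugacy classes of MASGs $H$ listed in Section~\ref{sec:MASG_momentmaps}, the aim is to construct a unitary operator
\[
    U : \cA^2_\lambda(D) \longrightarrow L^2(\Omega, \dif\nu)
\]
onto some concrete Hilbert space of sequences or functions, depending on $H$ and $\lambda$, with the property that for every symbol $a \in L^\infty(D)^H$ the conjugated Toeplitz operator $U\, T^{(\lambda)}_a\, U^*$ acts as a multiplication operator by some essentially bounded function $\gamma_{a,\lambda}$ on $L^2(\Omega, \dif\nu)$. Since multiplication operators always commute with one another, such a result immediately gives that the $C^*$-algebra $\cT^{(\lambda)}(L^\infty(D)^H)$ is commutative.

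To carry this out, I would first handle the quasi-elliptic case $H = E(n)$ on $\B^n$. Here an $H$-invariant symbol is precisely one of the form $a(z) = \widetilde{a}(|z_1|,\dots,|z_n|)$, and the monomials $e_\alpha(z) = c_{\alpha,\lambda} z^\alpha$ form an orthonormal basis of $\cA^2_\lambda(\B^n)$. A direct polar-coordinate computation then shows that $T^{(\lambda)}_a e_\alpha = \gamma_{a,\lambda}(\alpha)\, e_\alpha$ with $\gamma_{a,\lambda}(\alpha)$ an explicit beta-type integral in $\widetilde{a}$, so $\Omega = \N^n$ with counting measure and $U$ is the expansion in this basis.

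For the remaining four MASGs, I would pass to the Siegel domain $D_n$ and exploit the structure of the action of $H$. The general recipe is: use a Fourier series in the angular coordinates of the toral factor of $H$ (when present); use a Fourier transform in $\re(z_n)$ for the translation directions in $P(n)$ and in $\re(z'), \re(z_n)$ for the nilpotent directions in $N(n)$ and $N(n,k)$; use a Mellin-type transform in $\im(z_n)$ for the dilation factor in $H(n)$. Combining these transforms with the explicit Bergman kernel of $D_n$ and then restricting to the holomorphic subspace yields in each case the desired $U$. The $H$-invariance of the symbol is exactly what allows $a$ to be pulled past the transforms so that $U T_a^{(\lambda)} U^*$ becomes multiplication by some $\gamma_{a,\lambda} \in L^\infty(\Omega,\dif\nu)$.

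The main technical obstacle is the case-by-case construction of $U$ and the verification that the Toeplitz operator diagonalizes; this is where one must carefully track the weighted measure $\dif v_\lambda$ or $\dif\widehat{v}_\lambda$, the reproducing kernel, and the orbit structure of the $H$-action. These constructions are carried out in detail in \cite{QVUnitBall1} and \cite{QVUnitBall2} for all five MASGs, and the theorem as stated is the conclusion obtained by gathering those five explicit diagonalizations.
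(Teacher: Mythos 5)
Your proposal is correct and matches the paper's treatment: the paper does not prove Theorem~\ref{thm:Toeplitz_H-invariant-QV} itself but cites it directly from \cite{QVUnitBall1}, noting only that a unitary map depending on $H$ and $\lambda$ simultaneously diagonalizes all the operators $T^{(\lambda)}_a$ with $a \in L^\infty(D)^H$. Your case-by-case outline of the Bargmann-type transforms (monomial basis for $E(n)$, Fourier/Mellin transforms adapted to the toral, translational and dilational factors for the four MASGs on $D_n$) is an accurate summary of how that diagonalization is carried out in the cited references, and your final reduction of commutativity to the fact that multiplication operators commute is exactly the intended argument.
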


Furthermore, it is possible to write down a unitary transformation that simultaneously diagonalizes all the Toeplitz operators $T^{(\lambda)}_a$, for $a \in L^\infty(D)^H$. Such a unitary map depends only on $H$ and $\lambda$. We refer to \cite{QVUnitBall1} for the specific formulas.

We will reformulate Theorem~\ref{thm:Toeplitz_H-invariant-QV} in terms of moment maps. The key result to achieve this is the following. It establishes that, for a MASG $H$, a symbol is $H$-invariant if and only if it is a function of the moment map of the $H$-action.

\begin{proposition}\label{prop:H-invariance_momentmaps}
    Let $D$ be either of the domains $\B^n$ or $D_n$, and let $H$ be a MASG of the group of biholomorphisms of $D$. If $\mu : D \rightarrow \R^n$ denotes the moment map of the $H$-action, then for every function $a : D \rightarrow \C$ the following conditions are equivalent.
    \begin{enumerate}
        \item The function $a$ is $H$-invariant.
        \item There exists a function $f : \mu(D) \rightarrow \C$ such that the diagram
            \[
                \xymatrix{
                    D \ar[r]^a \ar[d]^\mu & \C \\
                    \mu(D) \ar[ur]^f
                }
            \]
            commutes.
    \end{enumerate}
\end{proposition}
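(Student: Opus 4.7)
The plan is to handle the two implications separately, with $(2) \Rightarrow (1)$ being formal and $(1) \Rightarrow (2)$ requiring explicit information from Section~\ref{sec:MASG_momentmaps}. For $(2) \Rightarrow (1)$: since $H$ is Abelian (being a MASG), the remark immediately following Definition~\ref{def:moment_map} yields $\mu \circ h = \mu$ for every $h \in H$. If $a = f \circ \mu$, then $a \circ h = f \circ (\mu \circ h) = f \circ \mu = a$, so $a$ is $H$-invariant.

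For $(1) \Rightarrow (2)$, the strategy is to show that, for each MASG $H$, the fibers of $\mu$ coincide with the $H$-orbits on $D$. Once this is established, an $H$-invariant function $a$ is automatically constant on the fibers of $\mu$, so one may unambiguously define $f : \mu(D) \to \C$ by $f(\mu(z)) = a(z)$, and this makes the diagram commute. The inclusion ``orbits $\subseteq$ fibers'' follows immediately from the $H$-invariance of $\mu$ noted above; the content of the argument is the reverse inclusion, namely: given $z_1, z_2 \in D$ with $\mu(z_1) = \mu(z_2)$, produce $h \in H$ with $h \cdot z_1 = z_2$.

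This reverse inclusion is then verified case by case using the explicit formulas from Propositions~\ref{prop:momentmap_E(n)}--\ref{prop:momentmap_N(n,k)}. In each case one reads off the invariants of $z$ that can be recovered from $\mu(z)$ and constructs the transporting element explicitly. For $E(n)$, summing the components of $\mu(z)$ recovers $-|z|^2/(1-|z|^2)$, hence $|z|^2$, and then each $|z_j|^2$; equality of moduli of components allows one to align phases by a torus element. For $P(n)$ and $H(n)$, the last coordinate of $\mu(z)$ recovers $\im(z_n) - |z'|^2$ (and, for $H(n)$, together with the other components recovers the ratios $|z_j|^2/(\im(z_n) - |z'|^2)$ and $\re(z_n)/(\im(z_n)-|z'|^2)$), from which one reads off the torus, translation, or dilation parameter. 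For $N(n)$ and $N(n,k)$ one uses invariance of $\im(z')$ (or $\im(z_{(2)})$) under the real-translation directions; the translation $b \in \R^{n-1}$ is chosen to align $\re(z_1')$ with $\re(z_2')$, and then the identity $|z_1'+b|^2 = |z_1'|^2 + 2\re(z_1')\cdot b + |b|^2$ together with equality of $\im(z_n) - |z'|^2$ forces $\im\bigl((b,h)\cdot z_1)_n\bigr) = \im(z_{2,n})$ automatically, while $h$ is then chosen to match real parts.

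The main obstacle is not conceptual but organizational: one must carry out the case analysis for all five MASGs and, in the nilpotent and quasi-nilpotent cases, keep careful track of how the quadratic terms $2iz' \cdot b + i|b|^2$ interact with the preserved quantity $\im(z_n) - |z'|^2$ so that the construction of $h$ closes up. Since each case yields an explicit $h$, no further difficulty arises, and the existence of $f$ in the second condition follows.
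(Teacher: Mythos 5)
Your proposal is correct and follows essentially the same route as the paper: both reduce condition (1)~$\Leftrightarrow$~(2) to showing that the fibers of $\mu$ coincide with the $H$-orbits, observe that one inclusion is immediate from the $H$-invariance of $\mu$, and establish the reverse inclusion by a case-by-case analysis over the five MASGs using the explicit moment maps, including the same treatment of the quadratic terms via the invariance of $\im(z_n)-|z'|^2$ in the nilpotent and quasi-nilpotent cases. The details you defer (recovering the invariants from $\mu(z)$ and constructing the transporting element) are exactly the computations carried out in the paper's proof.
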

\begin{proof}
    It is enough to prove that the $H$-orbits in $D$ are precisely the fibers of $\mu : D \rightarrow \mu(D)$. Since the map $\mu$ is $H$-invariant, we already know that every fiber of $\mu$ is a union of $H$-orbits. Hence, we only need to show that for $z, w \in D$ such that $\mu(z) = \mu(w)$ there exists $h \in H$ such that $z = h\cdot w$. We will prove this for all the MASG described in Section~\ref{sec:MASG_momentmaps}.

    For the quasi-elliptic case we have $D = \B^n$ and $H = \T^n$. Using the expression for the moment map from Proposition~\ref{prop:momentmap_E(n)}, it is easy to see that the condition $\mu(z) = \mu(w)$ implies $|z_j| = |w_j|$ for all $j = 1, \dots, n$, and so the claim is clear in this case.

    For the rest of the cases, we have $D = D_n$.

    In the quasi-parabolic case, $H = \T^n \times \R$, and by Proposition~\ref{prop:momentmap_P(n)} the condition $\mu(z) = \mu(w)$ is equivalent to
    \[
        \im(z_n) = \im(w_n), \quad |z_j| = |w_j|,
    \]
    for all $j=1, \dots, n-1$. Hence, it is also clear in this case that there exists $(t',h) \in \T^{n-1}\times\R$ such that $z = (t',h)\cdot w$.

    Let us now consider the quasi-hyperbolic case for which $H = \T^{n-1}\times\R_+$. The condition $\mu(z) = \mu(w)$ and Proposition~\ref{prop:momentmap_H(n)} implies in this case that
    \[
        \frac{\re(z_n)}{\im(z_n)-|z'|^2} = \frac{\re(w_n)}{\im(w_n)-|w'|^2},
        \quad
        \frac{|z_j|^2}{\im(z_n)-|z'|^2} = \frac{|w_j|^2}{\im(w_n)-|w'|^2}
    \]
    for all $j = 1, \dots, n-1$. Let us choose the positive real number given by
    \[
        r = \frac{\im(z_n)-|z'|^2}{\im(w_n)-|w'|^2}.
    \]
    Then, it is straightforward to compute from the previous identities that
    \[
        |r^\frac{1}{2} w_j| = |z_j|
    \]
    for all $j = 1, \dots, n-1$. Hence, there exists $t' \in \T^{n-1}$ such that
    \[
        (t',r)\cdot w = (z', rw_n).
    \]
    Note that from the previous identities we have $\re(r w_n) = r \re(w_n) = \re(z_n)$. But we also have
    \begin{align*}
        \im(rw_n) - |z'|^2 &= \im(rw_n) - |r^\frac{1}{2} w'|^2 \\
            &= r(\im(w_n) - |w'|^2) \\
            &= \im(z_n) - |z'|^2.
    \end{align*}
    This implies $rw_n = z_n$ and so we have $z = (t',r)\cdot w$.

    For the nilpotent case we have $H = \R^n$ and the condition $\mu(z) = \mu(w)$ implies, by Proposition~\ref{prop:momentmap_N(n)}, that we have
    \[
        \im(z') = \im(w'), \quad
            \im(z_n) - |z'|^2 = \im(w_n) - |w'|^2.
    \]
    Hence, there exists $b \in \R^{n-1}$ such that
    \begin{align*}
        (b,0)\cdot w &= (z', w_n + 2i w'\cdot b + i|b|^2) \\
            &= (z', (\re(w_n) - 2\im(w')\cdot b)
                + i(\im(w_n) + 2\re(w')\cdot b + |b|^2)),
    \end{align*}
    where the last expression collects the real and imaginary parts of the $n$-th component. Hence, if we take $h = \re(z_n) - \re(w_n) + 2\im(w')\cdot b$, then we have
    \[
        (b,h)\cdot w = (z', \re(z_n) + i(\im(w_n) + 2\re(w')\cdot b + |b|^2)).
    \]

    On the other hand, we note that the function $w \mapsto \im(w_n) - |w'|^2$ is $\R^n$-invariant for this nilpotent action and so its value at $w$ and $(b,h)\cdot w$ is the same. Hence, we have from the previous identities
    \begin{align*}
        \im(z_n) - |z'|^2 &= \im(w_n) - |w'|^2 \\
            &= \im(w_n) + 2\re(w')\cdot b + |b|^2 - |z'|^2.
    \end{align*}
    We conclude that
    \[
        \im(z_n) = \im(w_n) + 2\re(w')\cdot b + |b|^2.
    \]
    This implies that $(b,h)\cdot w = z$.

    For the quasi-nilpotent case $H = \T^k\times\R^{n-k}$ and by Proposition~\ref{prop:momentmap_N(n,k)} the condition $\mu(z) = \mu(w)$ implies that
    \[
        \im(z_{(2)}) = \im(w_{(2)}), \quad
        \im(z_n) - |z'|^2 = \im(w_n) - |w'|^2, \quad
        |z_j| = |w_j|,
    \]
    for all $j=1,\dots,k$. This is clearly equivalent to
    \[
        \im(z_{(2)}) = \im(w_{(2)}), \quad
        \im(z_n) - |z_{(2)}|^2 = \im(w_n) - |w_{(2)}|^2, \quad
        |z_j| = |w_j|,
    \]
    for all $j=1,\dots,k$, and so the result follows from the previous arguments.
\end{proof}

Extending the construction from Proposition~\ref{prop:H-invariance_momentmaps} we consider any connected Abelian subgroup $H$, either maximal or not, of the group of biholomorphisms of $D$. As before, $D$ is either $\B^n$ or $D_n$. Suppose that $\mu : D \rightarrow \fh^*$ is the moment map of the $H$-action on $D$. Then, we are interested in functions of the form $f \circ \mu : D \rightarrow \C$. Note that it is easy to see that $f$ is measurable if and only if $f\circ\mu$ is measurable.

\begin{definition}\label{def:momentmapfunction}
    With the notation from the previous paragraph, we say that a function of the form $f \circ \mu : D \rightarrow \C$ is a moment map function for $H$ or a $\mu$-function on $D$. We denote the set of measurable $\mu$-functions by $L^\infty(D)^\mu$.
\end{definition}

In particular, Proposition~\ref{prop:H-invariance_momentmaps} shows that if $H$ is a MASG, then
\[
    L^\infty(D)^H = L^\infty(D)^\mu.
\]
However, this is not the case for a general Abelian subgroup.

From the previous discussion, we obtain the following alternative formulation of Theorem~\ref{thm:Toeplitz_H-invariant-QV}.

\begin{theorem}\label{thm:Toeplitz_mu-functionsMASG}
    Let $D$ denote either of the domains $\B^n$ or $D_n$ and let $H$ be a MASG of the group of biholomorphisms of $D$. Suppose that $\mu : D \rightarrow \R^n$ denotes the moment map for the $H$-action. Then, the $C^*$-algebra $\cT^{(\lambda)}(L^\infty(D)^\mu)$ generated by Toeplitz operators acting on $\cA^2_\lambda(D)$ whose symbols are $\mu$-functions (i.e.~that belong to $L^\infty(D)^\mu$) is commutative for every $\lambda > -1$.
\end{theorem}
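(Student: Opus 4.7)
The plan is to observe that this statement is essentially a direct translation of Theorem~\ref{thm:Toeplitz_H-invariant-QV} through the language of moment maps, with the bridge provided by Proposition~\ref{prop:H-invariance_momentmaps}.

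First I would invoke Proposition~\ref{prop:H-invariance_momentmaps}. Since $H$ is a MASG by hypothesis, that proposition gives the equality of symbol spaces
\[
    L^\infty(D)^H = L^\infty(D)^\mu,
\]
because a measurable function on $D$ is $H$-invariant if and only if it can be written as $f \circ \mu$ for some function $f$ defined on the image $\mu(D)$, and the essential boundedness condition transfers through this factorization (one direction: any composition $f\circ \mu$ is $H$-invariant because $\mu$ is; the other direction: the orbit--fiber identification from Proposition~\ref{prop:H-invariance_momentmaps} produces the factor $f$ on $\mu(D)$). Equality of symbol spaces forces equality of the generating families of Toeplitz operators on each $\cA^2_\lambda(D)$, and hence equality of the generated $C^*$-algebras:
\[
    \cT^{(\lambda)}(L^\infty(D)^\mu) = \cT^{(\lambda)}(L^\infty(D)^H).
\]

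Next I would apply Theorem~\ref{thm:Toeplitz_H-invariant-QV}, which asserts that $\cT^{(\lambda)}(L^\infty(D)^H)$ is commutative for every $\lambda > -1$. Combined with the identity of $C^*$-algebras above, this immediately yields the desired commutativity of $\cT^{(\lambda)}(L^\infty(D)^\mu)$ for every $\lambda > -1$.

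There is really no serious obstacle; the entire content of the theorem at this MASG level lies in Proposition~\ref{prop:H-invariance_momentmaps}. The only point that would require a brief justification is the measurability step, namely that $f \circ \mu$ is measurable if and only if $f$ is measurable on $\mu(D)$, and that essential boundedness on $D$ corresponds to essential boundedness on $\mu(D)$ under the pushforward measure; both are standard and were already flagged in the paragraph preceding Definition~\ref{def:momentmapfunction}. Hence the proof reduces to two short sentences citing the two results.
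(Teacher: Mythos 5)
Your proposal is correct and follows exactly the paper's own route: the paper derives Theorem~\ref{thm:Toeplitz_mu-functionsMASG} as an ``alternative formulation'' of Theorem~\ref{thm:Toeplitz_H-invariant-QV} via the identity $L^\infty(D)^H = L^\infty(D)^\mu$ supplied by Proposition~\ref{prop:H-invariance_momentmaps}. Nothing is missing.
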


\section{Commuting Toeplitz operators and moment maps. The Abelian subgroup case.}
\label{sec:commToep_momentmaps_Abelian}
As in the previous sections, $D$ will denote either $\B^n$ or $D_n$. From now on $H$ will denote any Abelian subgroup of the group of biholomorphisms of $D$. For simplicity, we will assume that $H$ is connected. There is no real loss of generality since we will be dealing with moment maps where the Lie algebra is the main ingredient. Since $H$ is connected and Abelian, it is a subgroup of a MASG of biholomorphisms of $D$. Hence, after applying a linear fractional transformation of $D$, we can and will assume that $H$ is a subgroup of one of the MASG listed in Section~\ref{sec:MASG_momentmaps}.

Our first observation in this section is the fact that the moment map of the $H$-action can be obtained from the moment map of any MASG containing $H$. To prove this we introduce some notation.

For $H$ a connected Abelian subgroup, let $G$ be a MASG containing $H$. We will assume that $G$ is one of the subgroups listed in Section~\ref{sec:MASG_momentmaps}. Hence, there is a natural inclusion of Lie algebras $\fh \hookrightarrow \fg = \R^n$ that we will denote by $\iota$. This induces a transpose map $\iota^* : \fg^* = \R^n \rightarrow \fh^*$ which is surjective. Note that we have used the fact that $\fg = \R^n$, as described in Section~\ref{sec:MASG_momentmaps}, as well as the identification $\fg^* = \R^n$ given by the canonical inner product on $\R^n$. Furthermore, we can also identify $\fh = \fh^*$ using the inherited inner product, and after doing so the map $\iota^*$ is identified with the orthogonal projection $\R^n \rightarrow \fh$. We will use these identifications in the rest of this work.

\begin{proposition}\label{prop:momentmap_AbelianfromMASG}
    Let $G$ be a MASG of biholomorphisms of $D$ from the list in Section~\ref{sec:MASG_momentmaps} and let $H$ be a connected Abelian subgroup of $G$. Then, the $H$-action on $D$ has a moment map given by
    \begin{align*}
        \mu^H : D &\rightarrow \fh \\
        \mu^H &= \iota^* \circ \mu^G,
    \end{align*}
    where $\mu^G : D \rightarrow \R^n$ is the moment map for the $G$-action on $D$ and $\iota^* : \R^n \rightarrow \fh$ is the orthogonal projection.
\end{proposition}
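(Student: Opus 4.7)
The plan is to verify that $\mu^H = \iota^* \circ \mu^G$ satisfies both defining properties of a moment map listed in Definition~\ref{def:moment_map}, making essential use of the fact that the $H$-action on $D$ is nothing but the restriction of the $G$-action, together with the natural duality between $\iota$ and $\iota^*$.

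First I would unpack the identifications. For any $X \in \fh$, viewed as an element $\iota(X) \in \fg = \R^n$, the condition that $\iota^*$ is the transpose of $\iota$ gives
\[
    \mu^H_X(z) = \langle \mu^H(z), X\rangle_{\fh} = \langle \iota^*(\mu^G(z)), X\rangle_{\fh} = \langle \mu^G(z), \iota(X)\rangle_{\fg} = \mu^G_{\iota(X)}(z),
\]
for every $z \in D$. Hence the scalar functions attached to $\mu^H$ are exactly the pullbacks, under $\iota$, of the scalar functions attached to $\mu^G$.

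Next I would check property (1). Since $H \subset G$ acts on $D$ by the restriction of the $G$-action, for $X \in \fh$ the fundamental vector field $X^\sharp$ computed from the $H$-action coincides with the fundamental vector field $\iota(X)^\sharp$ computed from the $G$-action; indeed, both are defined by differentiating $s \mapsto \exp(s\iota(X))\cdot z$ at $s=0$. Combining this with the identity $\mu^H_X = \mu^G_{\iota(X)}$ and the fact that $\mu^G$ is a moment map for the $G$-action, we obtain
\[
    X_{\mu^H_X} = X_{\mu^G_{\iota(X)}} = \iota(X)^\sharp = X^\sharp,
\]
which is exactly condition (1) of Definition~\ref{def:moment_map} for $\mu^H$.

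For property (2), I would appeal to the remark immediately following Definition~\ref{def:moment_map}: since $H$ is Abelian, invariance under the adjoint representation reduces to ordinary $H$-invariance of $\mu^H$. Since the Abelian group $G$ acts trivially on $\fg^*$ via the coadjoint action, $\mu^G$ is $G$-invariant, and in particular $H$-invariant because $H \subset G$; composing with the linear (hence $H$-equivariant, where $H$ acts trivially on both sides) map $\iota^*$ preserves $H$-invariance, so $\mu^H = \iota^* \circ \mu^G$ is $H$-invariant. No step here is delicate; the only thing one must be careful about is keeping the identifications $\fg \cong \fg^* \cong \R^n$ and $\fh \cong \fh^*$ consistent, so that $\iota^*$ really does become the orthogonal projection claimed in the statement, which follows from the self-adjointness of the inclusion of a subspace with the inherited inner product.
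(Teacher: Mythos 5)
Your proposal is correct and follows essentially the same route as the paper: both reduce property (1) to the identity $\langle\iota^*\circ\mu^G(z),X\rangle=\langle\mu^G(z),X\rangle$ for $X\in\fh$ together with the $G$-moment-map property, and both obtain property (2) from the $G$-invariance of $\mu^G$ and the Abelian reduction of coadjoint equivariance to plain invariance. Your explicit remark that the fundamental vector field of $X\in\fh$ for the $H$-action agrees with that of $\iota(X)$ for the $G$-action is a point the paper leaves implicit, but it is the same argument.
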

\begin{proof}
    By definition, for every $X \in \R^n$, the function $\mu^G_X : D \rightarrow \R$ given by
    \[
        \mu^G_X(z) = \langle \mu^G(z), X \rangle,
    \]
    has Hamiltonian vector field $X^\sharp$. Note that, for the identifications described before, the pairing $\langle\cdot,\cdot\rangle$ is the canonical inner product on $\R^n$. In particular, this holds for every $X \in \fh$. In this case, we have
    \[
        \langle \iota^* \circ \mu^G(z) , X\rangle
        = \langle \mu^G(z) , X\rangle
    \]
    because $\iota^*$ is the orthogonal projection $\R^n \rightarrow \fh$ and $X \in \fh$. And so $\iota^*\circ\mu^G$ yields Hamiltonian vector fields corresponding to $X \in \fh$.

    Finally, the $G$-invariance of $\mu^G$ trivially implies the $H$-invariance of $\iota^*\circ\mu^G$. Hence, we conclude that $\mu^H = \iota^*\circ\mu^G$ is a moment map for the $H$-action.
\end{proof}

\begin{remark}\label{rmk:Hmomentfunctions_Gmomentfunctions}
    With the notation of Proposition~\ref{prop:momentmap_AbelianfromMASG}, we observe that for any function $f : \mu^H(D) \rightarrow \C$ we have
    \[
        f\circ \mu^H = f\circ\iota^*\circ\mu^G.
    \]
    Hence, any moment map function for the group $H$ is a moment map function for the group $G$. In other words, we have the inclusion of symbol sets
    \[
        L^\infty(D)^{\mu^H} \subset L^\infty(D)^{\mu^G} = L^\infty(D)^G,
    \]
    where the last identity follows from Proposition~\ref{prop:H-invariance_momentmaps} since $G$ is a MASG. We can add the observation
    \[
        L^\infty(D)^G \subset L^\infty(D)^H
    \]
    that follows trivially from $H \subset G$.
\end{remark}

The following result shows how to obtain a commutative $C^*$-algebra from any connected Abelian subgroup of biholomorphisms.

\begin{theorem}\label{thm:Toeplitz_mu-functions}
    Let $D$ be either of the domains $\B^n$ or $D_n$ and let $H$ be a connected Abelian subgroup of the group of biholomorphisms of $D$ with moment map $\mu$. Then, for every $\lambda > -1$, the $C^*$-algebra $\cT^{(\lambda)}(L^\infty(D)^\mu)$ generated by Toeplitz operators whose symbols are essentially bounded $\mu$-functions is commutative. Furthermore, if $G$ is a MASG containing $H$, then we have the inclusion
    \[
        \cT^{(\lambda)}(L^\infty(D)^\mu)
            \subset \cT^{(\lambda)}(L^\infty(D)^G)
    \]
    for every $\lambda > -1$, where $\cT^{(\lambda)}(L^\infty(D)^G)$ is the $C^*$-algebra generated by Toeplitz operators with $G$-invariant symbols.
\end{theorem}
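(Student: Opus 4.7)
The plan is to chain together the results already established in the paper, so the proof will be essentially short and not require any new computation. The key inputs are: (i) every connected Abelian subgroup of biholomorphisms of $D$ is contained, up to conjugation, in one of the MASGs listed in Section~\ref{sec:MASG_momentmaps}, as already noted at the start of Section~\ref{sec:commToep_momentmaps_Abelian}; (ii) Proposition~\ref{prop:momentmap_AbelianfromMASG}, which gives the moment map of $H$ as the orthogonal projection of the moment map of any containing MASG $G$; (iii) Remark~\ref{rmk:Hmomentfunctions_Gmomentfunctions}, which yields the inclusion $L^\infty(D)^{\mu^H} \subset L^\infty(D)^G$; and (iv) Theorem~\ref{thm:Toeplitz_H-invariant-QV} (equivalently Theorem~\ref{thm:Toeplitz_mu-functionsMASG}), which provides the commutativity of $\cT^{(\lambda)}(L^\infty(D)^G)$.

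First I would fix a MASG $G$ of biholomorphisms of $D$ that contains $H$; this exists by the assumption that $H$ is connected and Abelian together with the classification reviewed in Section~\ref{sec:MASG_momentmaps}. Next, I would invoke Remark~\ref{rmk:Hmomentfunctions_Gmomentfunctions}, whose content is precisely that any $\mu^H$-function can be rewritten as a $\mu^G$-function via $f\circ\mu^H = f\circ\iota^*\circ\mu^G$, and therefore $L^\infty(D)^{\mu^H} \subset L^\infty(D)^{\mu^G} = L^\infty(D)^G$, where the last equality is Proposition~\ref{prop:H-invariance_momentmaps}. Passing to the generated $C^*$-algebras (which is a monotone operation with respect to inclusion of generating sets) yields the stated inclusion
\[
    \cT^{(\lambda)}(L^\infty(D)^{\mu^H}) \subset \cT^{(\lambda)}(L^\infty(D)^G).
\]

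Finally, to get the commutativity claim, I would apply Theorem~\ref{thm:Toeplitz_H-invariant-QV} to the MASG $G$ to conclude that $\cT^{(\lambda)}(L^\infty(D)^G)$ is commutative for every $\lambda>-1$, and then use the elementary fact that any $C^*$-subalgebra of a commutative $C^*$-algebra is itself commutative. This gives commutativity of $\cT^{(\lambda)}(L^\infty(D)^{\mu^H})$ and completes the proof.

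There is essentially no serious obstacle here: all the substantive work has been done in the preceding sections (the explicit moment map computations in Section~\ref{sec:MASG_momentmaps}, the identification of $H$-invariant symbols with $\mu^G$-functions for MASGs in Proposition~\ref{prop:H-invariance_momentmaps}, the compatibility of moment maps under inclusion in Proposition~\ref{prop:momentmap_AbelianfromMASG}, and the commutativity theorem of Quiroga--Vasilevski for MASGs). The theorem is then a formal consequence, obtained by the two-line observation that a subset of a commutative family of Toeplitz operators generates a commutative $C^*$-subalgebra.
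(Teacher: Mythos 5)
Your proposal is correct and follows exactly the same route as the paper's own proof: reduce to a containing MASG $G$, use Remark~\ref{rmk:Hmomentfunctions_Gmomentfunctions} together with Proposition~\ref{prop:H-invariance_momentmaps} to get $L^\infty(D)^{\mu^H}\subset L^\infty(D)^G$, pass to the generated $C^*$-algebras, and conclude commutativity from Theorem~\ref{thm:Toeplitz_H-invariant-QV}. No discrepancies to report.
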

\begin{proof}
    By Proposition~\ref{prop:momentmap_AbelianfromMASG}, we can assume that the moment map of the $H$-action is given by $\mu = \iota^*\circ\mu^G$. Hence, Remark~\ref{rmk:Hmomentfunctions_Gmomentfunctions} implies the inclusions
    \[
        L^\infty(D)^\mu \subset L^\infty(D)^{\mu^G} = L^\infty(D)^G
    \]
    which imply that
    \[
        \cT^{(\lambda)}(L^\infty(D)^\mu)
            \subset \cT^{(\lambda)}(L^\infty(D)^G)
    \]
    for every $\lambda > -1$. Then, the commutativity of $\cT^{(\lambda)}(L^\infty(D)^G)$ (see \cite{QVUnitBall1}) implies the commutativity of $\cT^{(\lambda)}(L^\infty(D)^\mu)$.
\end{proof}

Next, we observe the following behavior of the commutative $C^*$-algebras obtained from Theorem~\ref{thm:Toeplitz_mu-functions}.

\begin{theorem}\label{thm:Toeplitz-indexedbyH}
    Let $D$ denote either of the domains $\B^n$ or $D_n$. Then, for every $\lambda > -1$ the assignment
    \[
        H \mapsto \cT^{(\lambda)}(L^\infty(D)^{\mu^H})
    \]
    is an inclusion preserving map from the set of connected Abelian subgroups of the group of biholomorphisms of $D$ into the set of commutative $C^*$-algebras of operators acting on $\cA^2_\lambda(D)$.
\end{theorem}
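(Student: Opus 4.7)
The plan is to establish two things: that the assignment produces a commutative $C^*$-algebra for every connected Abelian subgroup $H$, and that the assignment is monotone with respect to inclusion. The first of these is essentially immediate. For any connected Abelian subgroup $H$ of the biholomorphisms of $D$, Theorem~\ref{thm:Toeplitz_mu-functions} already asserts that $\cT^{(\lambda)}(L^\infty(D)^{\mu^H})$ is a commutative $C^*$-algebra of operators on $\cA^2_\lambda(D)$, so the assignment takes values where claimed.

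For the monotonicity, I would begin by fixing connected Abelian subgroups $H_1 \subset H_2$ of the biholomorphisms of $D$. Since $H_2$ is connected and Abelian, after applying a linear fractional transformation we can take $H_2$ to lie inside one of the MASG's $G$ listed in Section~\ref{sec:MASG_momentmaps}, and then automatically $H_1 \subset H_2 \subset G$. Let $\iota_1 : \fh_1 \hookrightarrow \fg = \R^n$ and $\iota_2 : \fh_2 \hookrightarrow \fg = \R^n$ denote the inclusions of Lie algebras. By Proposition~\ref{prop:momentmap_AbelianfromMASG}, the moment maps satisfy
\[
    \mu^{H_1} = \iota_1^* \circ \mu^G, \qquad \mu^{H_2} = \iota_2^* \circ \mu^G,
\]
where $\iota_1^* : \R^n \to \fh_1$ and $\iota_2^* : \R^n \to \fh_2$ are the orthogonal projections under the identifications from the discussion preceding Proposition~\ref{prop:momentmap_AbelianfromMASG}.

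The key elementary observation is that since $\fh_1 \subset \fh_2 \subset \R^n$ as subspaces with the inherited inner products, the orthogonal projection onto $\fh_1$ factors through the orthogonal projection onto $\fh_2$. That is, $\iota_1^* = \pi \circ \iota_2^*$, where $\pi : \fh_2 \to \fh_1$ is the orthogonal projection. Consequently $\mu^{H_1} = \pi \circ \mu^{H_2}$, so for any measurable $f : \mu^{H_1}(D) \to \C$ we have
\[
    f \circ \mu^{H_1} = (f \circ \pi) \circ \mu^{H_2},
\]
which is itself a moment map function for the $H_2$-action. This yields the inclusion of symbol sets $L^\infty(D)^{\mu^{H_1}} \subset L^\infty(D)^{\mu^{H_2}}$, and passing to the generated $C^*$-algebras of Toeplitz operators gives
\[
    \cT^{(\lambda)}(L^\infty(D)^{\mu^{H_1}}) \subset \cT^{(\lambda)}(L^\infty(D)^{\mu^{H_2}}),
\]
for every $\lambda > -1$, as required.

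There is no serious obstacle here; the main content is already packaged in Proposition~\ref{prop:momentmap_AbelianfromMASG} and Theorem~\ref{thm:Toeplitz_mu-functions}. The only point that deserves any care is the justification that we may choose a single MASG $G$ containing both $H_1$ and $H_2$ simultaneously, which follows automatically from $H_1 \subset H_2$ together with the existence of some MASG containing $H_2$. Once that is in place, the linear-algebra factorization $\iota_1^* = \pi \circ \iota_2^*$ transports moment map functions for $H_1$ to moment map functions for $H_2$ and the result is immediate.
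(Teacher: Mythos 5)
Your proposal is correct and follows essentially the same route as the paper's own proof: both rely on Theorem~\ref{thm:Toeplitz_mu-functions} for commutativity and on the factorization $\mu^{H_1} = \pi \circ \mu^{H_2}$ through the orthogonal projection $\pi : \fh_2 \rightarrow \fh_1$ (obtained from Proposition~\ref{prop:momentmap_AbelianfromMASG}) to get $L^\infty(D)^{\mu^{H_1}} \subset L^\infty(D)^{\mu^{H_2}}$ and hence the inclusion of the generated $C^*$-algebras. Your extra remark justifying that a single MASG containing $H_2$ also contains $H_1$ is a harmless elaboration of what the paper leaves implicit.
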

\begin{proof}
    Let $H_1$ and $H_2$ be connected Abelian subgroups of the group of biholomorphisms of $D$, and assume that $H_1 \subset H_2$. Then, we know that the moment maps of their actions are given by
    \[
        \mu^{H_1} = \iota_1^* \circ \mu^G, \quad
        \mu^{H_2} = \iota_2^* \circ \mu^G,
    \]
    where $G$ is a MASG containing $H_2$ and $\iota_j^* : \R^n \rightarrow \fh_j$ are the orthogonal projections, for $j =1, 2$. In particular, if we denote by $\pi : \fh_2 \rightarrow \fh_1$ the orthogonal projection, then we have
    \[
        \mu^{H_1} = \pi \circ \mu^{H_2}.
    \]
    This implies that every moment map function for $H_1$ is a moment map function for $H_2$. In other words, we have
    \[
        L^\infty(D)^{\mu^{H_1}} \subset L^\infty(D)^{\mu^{H_2}}.
    \]
    This and Theorem~\ref{thm:Toeplitz_mu-functions} prove the result.
\end{proof}

\begin{remark}
    By the previous result and for $\lambda > -1$ fixed, the family of commutative $C^*$-algebras
    \[
        \Big\{ \cT^{(\lambda)}(L^\infty(D)^{\mu^H}) \mid
                H \text{ Abelian subgroup}  \Big\}
    \]
    where $H$ runs through all the connected Abelian subgroups acting holomorphically on $D$, is naturally ordered by such subgroups $H$. If $G$ is a MASG, then its corresponding $C^*$-algebra contains that of any subgroup $H$ of $G$. On the other extreme, for the trivial identity group, the moment map is the zero map and so the corresponding moment map functions are the constant ones. Hence, the $C^*$-algebra associated to the trivial identity group consists of the multiples of the identity operator acting on $\cA^2_\lambda(D)$. It is interesting to compare this situation with the one corresponding to the $H$-invariant symbols. In fact, an inclusion $H_1 \subset H_2$ of connected Abelian subgroups acting holomorphically on $D$ implies a reversed inclusion
    \[
        L^\infty(D)^{H_2} \subset L^\infty(D)^{H_1},
    \]
    which yields the reversed inclusion
    \[
        \cT^{(\lambda)}(L^\infty(D)^{H_2}) \subset
            \cT^{(\lambda)}(L^\infty(D)^{H_1}).
    \]
    The effect of this is that even if $H_2$ yields a commutative $C^*$-algebra when we use $H_2$-invariant symbols, the $C^*$-algebra obtained from a subgroup $H_1$, using $H_1$-invariant symbols, is not necessarily commutative. We refer to Example~6.5 from \cite{DOQJFA} for the description of a case where this occurs.
\end{remark}

We now proceed to describe the spaces of symbols obtained as moment map functions. As noted above, we can assume that $H$ is a connected subgroup of one of the MASG described in Section~\ref{sec:MASG_momentmaps}. So we start by describing the connected subgroups of the MASG and their moment maps.

We recall that for a given connected Lie group $G$, there is a one-to-one correspondence between the connected subgroups of $G$ and the Lie subalgebras $\fh$ of the Lie algebra $\fg$ of $G$. In our situation, $G$ is a MASG of the group of biholomorphisms of $D$, and so $\fg = \R^n$ as described before. Because $\fg = \R^n$ is Abelian, every subspace is a Lie subalgebra and the subgroup corresponding to a subspace is precisely the image of the subspace under the exponential map. This yields the next result.

\begin{proposition}\label{prop:Abeliansubgroups}
    Let $G$ be one of the MASGs described in Section~\ref{sec:MASG_momentmaps}. If $\fh \subset \R^n$ is any subspace, then the subset of $G$ given by
    \[
        H = \exp(\fh)
    \]
    is a connected Abelian subgroup. Furthermore, every connected Abelian subgroup of $G$ is obtained in this way.
\end{proposition}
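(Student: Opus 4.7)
The plan is to exploit the fact that each MASG $G$ listed in Section~\ref{sec:MASG_momentmaps} is a connected Abelian Lie group built out of the factors $\R$, $\R_+$ and $\T$, so its exponential map $\exp : \R^n \to G$ is a surjective Lie group homomorphism. Once this is established, both halves of the proposition reduce to standard Abelian Lie theory.

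For the first half, I would start by observing that the descriptions of the exponentials of $\R$, $\R_+$ and $\T$ given at the end of Section~\ref{sec:MASG_momentmaps} imply that in each case the map $\exp : \R^n \to G$ satisfies $\exp(X+Y) = \exp(X)\exp(Y)$ for all $X, Y \in \R^n$. Given a subspace $\fh \subset \R^n$, this homomorphism property immediately gives that $\exp(\fh)$ is closed under products and inverses, hence is a subgroup of $G$. It is connected as the continuous image of the connected set $\fh$, and it is Abelian as a subgroup of the Abelian group $G$. This handles the first assertion.

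For the converse, let $H$ be an arbitrary connected Abelian subgroup of $G$. By the general correspondence between connected Lie subgroups and Lie subalgebras, $H$ has a well-defined Lie algebra $\fh \subset \fg = \R^n$, which is a subspace (every vector subspace of the Abelian Lie algebra $\R^n$ is automatically a Lie subalgebra). The connected subgroup associated to $\fh$ in $G$ is, by definition, generated by $\exp_G(\fh)$. The key point is that, since $\exp : \R^n \to G$ is a group homomorphism, the set $\exp(\fh)$ is already a subgroup of $G$, so no generation step is needed: the connected subgroup of $G$ with Lie algebra $\fh$ is precisely $\exp(\fh)$, and this must coincide with $H$.

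The only step requiring care is the verification that in each of the five MASGs the exponential is literally a homomorphism of Lie groups; this is a direct componentwise check using the three explicit formulas recorded in Section~\ref{sec:MASG_momentmaps}. Everything else is a formal consequence of $G$ being connected and Abelian, so I do not anticipate a genuine obstacle beyond this routine verification.
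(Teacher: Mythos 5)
Your proposal is correct and follows essentially the same route as the paper, which derives the proposition from the standard correspondence between connected Lie subgroups and Lie subalgebras together with the observation that, $\fg=\R^n$ being Abelian, every subspace is a subalgebra whose associated connected subgroup is its image under $\exp$. The only difference is that you explicitly verify that $\exp:\R^n\to G$ is a homomorphism on each factor $\R$, $\R_+$, $\T$, a routine check the paper leaves implicit.
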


\begin{remark}\label{rmk:subgroups_and_orthogonalsets}
    For every MASG $G$ from Section~\ref{sec:MASG_momentmaps},  Proposition~\ref{prop:Abeliansubgroups} can be rephrased saying that the assignment
    \[
        \fh \mapsto \exp(\fh)
    \]
    is a bijection from the collection of subspaces of $\R^n$ onto the set of connected Abelian subgroups of $G$. For our purposes, a better way to parameterize such subgroups is by using coordinates. Hence, we will rather make use of the assignment
    \[
        \beta \mapsto \exp(\R\langle\beta\rangle)
    \]
    which is a surjection from the collection of linearly independent subsets of $\R^n$ onto the set of connected Abelian subgroups of $G$. Here and from now on, $\R\langle\beta\rangle$ denotes the subspace spanned by $\beta$. If the linearly independent set $\beta$ is orthogonal, then the orthogonal projection $\iota^* : \R^n \rightarrow \fh = \R\langle\beta\rangle$ is given by
    \[
        \iota^*(s) = \sum_{j=1}^m
            \frac{\langle s, v_j \rangle}{\langle v_j,v_j \rangle} v_j
    \]
    where $\beta = \{ v_1, \dots, v_m\}$.
\end{remark}

The previous discussion allows us to write down the following general formula for the moment maps of the connected Abelian subgroups under discussion. The proof is easily obtained from Proposition~\ref{prop:momentmap_AbelianfromMASG} and Remark~\ref{rmk:subgroups_and_orthogonalsets}.

\begin{proposition}\label{prop:momentmap_AbelianfromMASG-basis}
    Let $G$ be a MASG from the list in Section~\ref{sec:MASG_momentmaps} and $H$ a connected Abelian subgroup of $G$. Let $\beta = \{ v_1, \dots, v_m\}$ be an orthogonal basis of the Lie algebra $\fh$ of $H$. Then, the moment map for the $H$-action on $D$ (either $\B^n$ or $D_n$) is the map $\mu^H : D \rightarrow \fh$ given by
    \[
        \mu^H(z) =
            \sum_{j=1}^m \frac{\langle \mu^G(z), v_j\rangle}{\langle v_j,v_j \rangle} v_j
    \]
    for every $z \in D$.
\end{proposition}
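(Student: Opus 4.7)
The plan is to derive the formula by combining Proposition~\ref{prop:momentmap_AbelianfromMASG} with the explicit description of the orthogonal projection given in Remark~\ref{rmk:subgroups_and_orthogonalsets}. There is essentially no new geometric content to verify; the argument is a short piece of linear algebra once the two earlier ingredients are in place.

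First I would invoke Proposition~\ref{prop:momentmap_AbelianfromMASG}, which tells us that a moment map for the $H$-action on $D$ is given by $\mu^H = \iota^* \circ \mu^G$, where $\iota^* : \R^n \rightarrow \fh$ is the orthogonal projection onto $\fh$ (after using the canonical inner product identifications $\fg = \fg^* = \R^n$ and $\fh = \fh^*$). This step reduces the problem to writing down $\iota^*$ in a useful way.

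Next I would use the formula recorded in Remark~\ref{rmk:subgroups_and_orthogonalsets}: since $\beta = \{v_1, \dots, v_m\}$ is an orthogonal basis of $\fh = \R\langle\beta\rangle$, the orthogonal projection admits the expression
\[
    \iota^*(s) = \sum_{j=1}^m \frac{\langle s, v_j \rangle}{\langle v_j, v_j \rangle} v_j
\]
for every $s \in \R^n$. Substituting $s = \mu^G(z)$ and combining with $\mu^H = \iota^* \circ \mu^G$ gives the stated formula at every $z \in D$.

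There is no real obstacle to overcome: both the surjectivity of $\iota^*$ onto $\fh$ and the $H$-invariance of $\iota^* \circ \mu^G$ were already handled in the proof of Proposition~\ref{prop:momentmap_AbelianfromMASG}, and the coordinate formula for the projection is a standard fact about orthogonal bases. Hence the proof can be stated in just a couple of lines as a straightforward consequence of the preceding proposition and remark.
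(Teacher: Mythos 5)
Your proposal is correct and follows exactly the route the paper indicates: it states that the result ``is easily obtained from Proposition~\ref{prop:momentmap_AbelianfromMASG} and Remark~\ref{rmk:subgroups_and_orthogonalsets},'' which is precisely your combination of $\mu^H = \iota^*\circ\mu^G$ with the coordinate formula for the orthogonal projection onto $\R\langle\beta\rangle$. Nothing is missing.
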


We now obtain the following description of the moment map functions for connected Abelian subgroups $H$ as above.

\begin{corollary}\label{cor:momentmapfunctions_betaG}
    With the notation of Proposition~\ref{prop:momentmap_AbelianfromMASG-basis}, let us choose $\beta = \{ v_1, \dots, v_m\}$ any basis for $\fh$. Then, the space of essentially bounded moment map functions for $H$ is given by
    \[
        L^\infty(D)^{\mu^H} =
        \{ f(a_1, \dots, a_m) \mid f \in L^\infty((a_1, \dots, a_m)(\B^n))\},
    \]
    where the functions $a_j : \B^n \rightarrow \R$ are defined by
    \[
        a_j(z) = \langle \mu^G(z), v_j\rangle
    \]
    for all $z \in D$ and for every $j = 1, \dots, m$.
\end{corollary}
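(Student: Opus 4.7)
The plan is to show that the fibers of $\mu^H : D \rightarrow \fh$ coincide with the fibers of the map $A = (a_1,\dots,a_m) : D \rightarrow \R^m$, and then deduce the desired description from Definition~\ref{def:momentmapfunction}. The core observation is that the tuple $(a_1(z),\dots,a_m(z))$ is nothing other than the coordinate vector of $\mu^H(z)$ relative to a basis of $\fh$ dual to $\beta$, so knowing $\mu^H(z)$ is equivalent to knowing $A(z)$.

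To make this precise, I would first invoke Proposition~\ref{prop:momentmap_AbelianfromMASG} to write $\mu^H = \iota^* \circ \mu^G$, where $\iota^* : \R^n \rightarrow \fh$ is the orthogonal projection. Since $\iota^*$ is self-adjoint, for every $X \in \fh$ one has
\[
    \langle \mu^H(z), X\rangle = \langle \iota^*(\mu^G(z)), X\rangle = \langle \mu^G(z), X \rangle.
\]
Taking $X = v_j$ yields $a_j(z) = \langle \mu^H(z), v_j\rangle$ for every $j = 1,\dots,m$. Because $\beta$ is a basis of $\fh$ and the inner product on $\fh$ is non-degenerate, the linear map $\Phi : \fh \rightarrow \R^m$ defined by $\Phi(\xi) = (\langle \xi, v_1\rangle, \dots, \langle \xi, v_m\rangle)$ is a bijection. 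Consequently $A = \Phi \circ \mu^H$, which immediately gives that the fibers of $\mu^H$ and those of $A$ coincide, and moreover that $A(D) = \Phi(\mu^H(D))$.

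With this identification in hand, the inclusion $\supset$ is clear: if $f \in L^\infty(A(D))$, then $f(a_1,\dots,a_m) = (f \circ \Phi) \circ \mu^H$ is an essentially bounded $\mu$-function. For the inclusion $\subset$, given $a \in L^\infty(D)^{\mu^H}$ with $a = g \circ \mu^H$, one sets $f = g \circ \Phi^{-1}$ on $A(D)$; then $a = f(a_1,\dots,a_m)$, and since $\Phi$ is a linear isomorphism, $g$ is essentially bounded on $\mu^H(D)$ if and only if $f$ is essentially bounded on $A(D)$.

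The only point requiring mild care is the essential boundedness part, since changing coordinates through $\Phi$ could in principle distort the relevant null sets. The main obstacle — which is really a mild one — is therefore to observe that since both $a$ and $f(a_1,\dots,a_m)$ coincide pointwise on $D$ through the bijection $\Phi$, and $\Phi$ is a homeomorphism between $\mu^H(D)$ and $A(D)$, the essential supremum of $a$ on $D$ equals the essential supremum of $f$ on $A(D)$ with respect to the push-forward measure, so the $L^\infty$ membership transfers without loss.
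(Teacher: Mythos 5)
Your proof is correct and takes essentially the same route as the paper's: both arguments reduce the statement to the fact that passing between bases of $\fh$ amounts to an invertible linear change of the arguments of $f$, using the identity $\langle\mu^H(z),X\rangle=\langle\mu^G(z),X\rangle$ for $X\in\fh$ that comes from $\iota^*$ being the orthogonal projection. The only cosmetic difference is that you work directly with the Gram map $\Phi(\xi)=(\langle\xi,v_1\rangle,\dots,\langle\xi,v_m\rangle)$, whereas the paper's proof routes through an auxiliary orthogonal basis $\beta'$ and an invertible matrix $(c_{jk})$.
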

\begin{proof}
    Let $\beta' = \{v'_1, \dots, v'_m\}$ be an orthogonal basis of $\fh$. Then, by Definition~\ref{def:momentmapfunction} and Proposition~\ref{prop:momentmap_AbelianfromMASG} the moment map functions for $H$ are precisely functions of the form
    \[
        a(z) = f(\langle\mu^G(z),v'_1\rangle, \dots,
        \langle\mu^G(z),v'_m\rangle)
    \]
    for all $z \in D$ and some function $f$. Since there is an invertible matrix $(c_{jk})_{j,k=1}^m$ such that
    \[
        \langle\mu^G(z),v_j\rangle = \sum_{j,k=1}^m c_{jk}
                \langle\mu^G(z),v'_k\rangle
    \]
    the result is now clear.
\end{proof}

\begin{remark}\label{rmk:momentmapfunctions_beta_matrices}
    Still with the notation of Proposition~\ref{prop:momentmap_AbelianfromMASG-basis} let us fix a MASG $G$. For a basis $\beta = \{v_1, \dots, v_m\}$, not necessarily orthogonal, of the Lie algebra $\fh$ of a connected subgroup $H$ of $G$, let us consider the matrix $A(\beta)$ whose rows are the elements of $\beta$, in other words, we take
    \[
        A(\beta) =
        \begin{pmatrix}
          v_1 \\
          \vdots \\
          v_m
        \end{pmatrix}.
    \]
    Then, for the functions $a_1, \dots, a_m$ defined in Corollary~\ref{cor:momentmapfunctions_betaG} we have
    \[
        \begin{pmatrix}
          a_1(z) \\
          \vdots \\
          a_m(z)
        \end{pmatrix} = A(\beta) \mu^G(z)^\top,
    \]
    for all $z \in D$. It follows that the essentially bounded moment map functions for $H$ are functions $a : D \rightarrow \C$ of the form
    \[
        a(z) = f(A(\beta)\mu^G(z)^\top)
    \]
    for all $z \in D$, for a suitable function $f$. Note that in this expression we are writing the arguments of $f$ as a column instead of a row. There should not be any confusion with this slight abuse of notation, which we will use in the rest of this work. On the other hand, if $A$ is a real $m\times n$ matrix with rank $m$ the functions $a : D \rightarrow \C$ of the form
    \[
        a(z) = f(A\mu^G(z)^\top)
    \]
    for all $z \in D$ and a suitable $f$, are moment map functions. The connected Abelian subgroup $H$ for which this holds is the connected subgroup of $G$ whose Lie algebra $\fh$ is the row space of the matrix $A$. Hence, there is a correspondence between the different types of moment map functions obtained from Corollary~\ref{cor:momentmapfunctions_betaG} and rank $m$ real matrices of size $m\times n$ ($1 \leq m \leq n$). This correspondence is not one-to-one since two such matrices may have the same row space. With respect to this correspondence, we observe that for $H = G$, a MASG, we can choose $\beta$ to be any orthogonal basis of $\R^n$ to obtain the moment map $\mu^G$. Recall that in this case, by Proposition~\ref{prop:H-invariance_momentmaps} it follows that the moment map functions for $G$ are precisely the $G$-invariant functions.
\end{remark}

The next step is to apply the expressions of the moment maps of the MASG computed in Section~\ref{sec:MASG_momentmaps} to obtain the explicit formulas for the moment map functions for connected Abelian subgroups. This will be done in the next subsections for each MASG. This is used to show that the symbols introduced in this work include many of those already known in the literature, but they also include new examples not considered previously. We also obtain corresponding commutativity results for $C^*$-algebras generated by Toeplitz operators with our special symbols.

\subsection{$\beta$-quasi-elliptic symbols}
In this subsection we consider the case $G = \T^n$ for the quasi-elliptic action $E(n)$ described in Section~\ref{sec:MASG_momentmaps}. The next result provides the explicit expression of the moment maps for connected subgroups of $\T^n$. It is a consequence of Propositions~\ref{prop:momentmap_E(n)} and \ref{prop:momentmap_AbelianfromMASG-basis} and Corollary~\ref{cor:momentmapfunctions_betaG}.

\begin{proposition}\label{prop:momentmap_betaE(n)}
    Let $H$ be a connected Abelian subgroup of $\T^n$ whose Lie algebra $\fh$ has the basis $\beta =  \{v_1, \dots, v_m\}$. Then, the following hold.
    \begin{enumerate}
        \item If $\beta$ is orthogonal, then the moment map for the $H$-action on $\B^n$ is the map $\mu^H : \B^n \rightarrow \fh$ given by
            \[
                \mu^H(z) = -\frac{1}{1-|z|^2}
                    \sum_{j=1}^m
                    \frac{\langle (|z_1|^2, \dots, |z_n|^2), v_j\rangle}{\langle v_j,v_j \rangle} v_j,
            \]
            for every $z \in \B^n$.
        \item If $\beta$ is an arbitrary basis, then the space of essentially bounded moment map functions for $H$ is given by
            \[
                L^\infty(\B^n)^{\mu^H} =
                \{ f(a_1, \dots, a_m) \mid f \in L^\infty((a_1, \dots, a_m)(\B^n))\},
            \]
            where $a_j : \B^n \rightarrow \C$ are defined by
            \[
                a_j(z) = \frac{1}{1-|z|^2}\langle (|z_1|^2, \dots, |z_n|^2), v_j\rangle,
            \]
            for all $z \in \B^n$ and for all $j = 1, \dots, m$.
    \end{enumerate}
\end{proposition}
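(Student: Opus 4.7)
The plan is to derive both parts by direct substitution into the general results already established, as the proposition is stated explicitly as a consequence of Propositions~\ref{prop:momentmap_E(n)} and \ref{prop:momentmap_AbelianfromMASG-basis} together with Corollary~\ref{cor:momentmapfunctions_betaG}. No new geometric or analytic input is needed.

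For part (1), I specialize Proposition~\ref{prop:momentmap_AbelianfromMASG-basis} to the case $G = \T^n$ acting as $E(n)$ on $\B^n$. That result expresses the moment map $\mu^H(z)$ as $\sum_{j=1}^m \frac{\langle \mu^G(z), v_j\rangle}{\langle v_j, v_j\rangle} v_j$ whenever $\beta$ is an orthogonal basis of $\fh$; the orthogonality hypothesis is used exactly here, because it is what makes the formula $\iota^*(s) = \sum_j \frac{\langle s, v_j\rangle}{\langle v_j, v_j\rangle} v_j$ valid for the orthogonal projection. Substituting the explicit formula $\mu^G(z) = -\frac{1}{1-|z|^2}(|z_1|^2, \dots, |z_n|^2)$ from Proposition~\ref{prop:momentmap_E(n)} and pulling the common scalar $-\frac{1}{1-|z|^2}$ outside the sum then yields the claimed expression verbatim.

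For part (2), I apply Corollary~\ref{cor:momentmapfunctions_betaG} in the same specialization. That corollary asserts that, for any basis $\beta$ of $\fh$ (not required to be orthogonal, since the corollary absorbs the change-of-basis step), the essentially bounded moment map functions for $H$ are precisely those of the form $f(a_1, \dots, a_m)$ with $a_j(z) = \langle \mu^G(z), v_j\rangle$. Substituting the explicit $\mu^G$ gives $a_j(z) = -\frac{1}{1-|z|^2} \langle (|z_1|^2, \dots, |z_n|^2), v_j\rangle$; the overall sign can be absorbed into a redefinition of $f$, producing the stated functions $a_j$.

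There is no genuine obstacle in the proof; the only point requiring a little care is to flag that the orthogonality hypothesis is needed in part (1) but not in part (2), since Corollary~\ref{cor:momentmapfunctions_betaG} has already handled the change of basis between an arbitrary $\beta$ and an orthogonal one.
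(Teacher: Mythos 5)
Your proposal is correct and follows exactly the route the paper intends: the paper gives no separate proof but states the proposition as an immediate consequence of Propositions~\ref{prop:momentmap_E(n)} and \ref{prop:momentmap_AbelianfromMASG-basis} and Corollary~\ref{cor:momentmapfunctions_betaG}, which is precisely the substitution you carry out. Your remark that the sign discrepancy in $a_j$ is absorbed into a redefinition of $f$, and that orthogonality is needed only in part (1), correctly accounts for the only details the paper leaves implicit.
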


The moment map functions for the subgroups in the previous result yield the following special symbols.

\begin{definition}\label{def:beta-quasi-elliptic}
    Let $\beta = \{ v_1, \dots, v_m \} \subset \R^n$ be a linearly independent set. An essentially bounded function $a : \B^n \rightarrow \C$ is called a $\beta$-quasi-elliptic symbol if there is a measurable function $f$ such that
    \[
        a = f(a_1, \dots, a_m),
    \]
    where $a_1, \dots, a_m$ are given as in Proposition~\ref{prop:momentmap_betaE(n)}. The space of all essentially bounded $\beta$-quasi-elliptic symbols will be denoted by $L^\infty(\B^n)_{\beta,E(n)}$. In other words, we have
    \[
        L^\infty(\B^n)_{\beta,E(n)} = L^\infty(\B^n)^{\mu^H},
    \]
    where $H = \exp(\R\langle\beta\rangle)$.
\end{definition}

As noted in Remark~\ref{rmk:momentmapfunctions_beta_matrices}, for $H = \T^n$ and for any basis $\beta$ of $\R^n$, the $\beta$-quasi-elliptic symbols are precisely the quasi-elliptic symbols from \cite{QVUnitBall1}.

\begin{example}[Quasi-radial symbols as $\beta$-quasi-elliptic symbols]
\label{ex:quasi-radial_partitions}
    Let us consider a partition $k = (k_1, \dots, k_m) \in \Z_+^m$ of $n$. In other words, $n = k_1 + \dots + k_m$ and $k_j \geq 1$ for all $j = 1, \dots, m$. For every $z \in \C^n$ we have a corresponding decomposition
    \[
        z = (z_{(1)}, \dots, z_{(m)})
    \]
    where $z_{(j)} \in \C^{k_j}$ is given by
    \[
        z_{(j)} = (z_{k_0 + \dots + k_{j-1}+1}, \dots, z_{k_0 + \dots + k_j}).
    \]
    We will use the convention $k_0 = 0$. Recall from \cite{VasilevskiQuasiRadial2010} that a symbol on $a : \B^n \rightarrow \C$ is called $k$-quasi-radial if it can be written as
    \[
        a(z) = f(|z_{(1)}|, \dots, |z_{(m)}|)
    \]
    for some function $f$. Let us now consider the group
    \[
        T_k = \{ (t_1 1_{k_1}, \dots, t_m 1_{k_m}) \mid t \in \T^m \}
            \simeq \T^m,
    \]
    where we denote $1_{k_j} = (1, \dots, 1) \in \C^{k_j}$ for every $j = 1, \dots, m$. The group $T_k$ is a connected closed subgroup of $\T^n$ whose Lie algebra has the orthogonal basis $\beta(k)$ that consists of the elements given by (for $j = 1, \dots, m$)
    \[
        v_j = (0, 1_{k_j}, 0)
    \]
    where the non-zero entries occur exactly at the indices $k_0 + \dots + k_{j-1}+1, \dots, k_0 + \dots + k_j$. We note that the corresponding matrix $A(\beta)$ obtained from Remark~\ref{rmk:momentmapfunctions_beta_matrices} is given by
    \[
        A(\beta) =
        \begin{pmatrix}
          1_{k_1} & 0 & \cdots & 0 \\ 0 & 1_{k_2} & \cdots & 0 \\
          \vdots & \vdots & \vdots & \vdots \\
          0 & 0 & \cdots & 1_{k_m}
        \end{pmatrix},
    \]
    which is a very simple example of a row-reduced echelon matrix. Note that we have
    \[
        |z_{(j)}|^2 = \langle(|z_1|^2, \dots, |z_n|^2),v_j\rangle,
    \]
    for every $j=1, \dots, m$. On the other hand, it is easy to see that the map
    \[
        (s_1, \dots, s_m) \mapsto \frac{1}{1-(s_1+\dots+s_m)}
            (s_1, \dots, s_m)
    \]
    can be inverted smoothly when $s_1 + \dots + s_m < 1$ and $s_j \geq 0$ for all $j = 1, \dots, m$. We conclude that for a symbol $a : \B^n \rightarrow \C$ the next conditions are equivalent
    \begin{itemize}
        \item $a(z) = f_1(|z_{(1)}|, \dots, |z_{(m)}|)$ for some $f_1$.
        \item $a(z) = f_2\bigg(\frac{|z_{(1)}|^2}{1-|z|^2},
                \dots,
                \frac{|z_{(m)}|^2}{1-|z|^2} \bigg)$ for some $f_2$.
    \end{itemize}
    It follows that, for this particular choice of $\beta(k)$, the $\beta(k)$-quasi-elliptic symbols from Definition~\ref{def:beta-quasi-elliptic} are precisely the $k$-quasi-radial symbols from \cite{VasilevskiQuasiRadial2010}. We also note that for the partition $k=(n)$ we obtain the radial symbols (see \cite{GQVJFA}) and for the partition $k=(1, \dots, 1) \in \Z^n_+$ we obtain the quasi-elliptic or separately radial symbols (see \cite{QVUnitBall1}). In particular, all of these spaces of symbols, $k$-quasi-radial, radial and separately radial, are realized as $\beta$-quasi-elliptic symbols for suitable choices of $\beta$.
\end{example}

By the previous example, the next result generalizes the commutativity of Toeplitz operators with $k$-quasi-radial symbols stated in \cite{VasilevskiQuasiRadial2010}. This result is a consequence of Theorem~\ref{thm:Toeplitz_mu-functions}.

\begin{theorem}\label{thm:commToeplitz-beta-quasi-elliptic}
    Let $\beta = \{ v_1, \dots, v_m\} \subset \R^n$ be a linearly independent set. Then, for every $\lambda > -1$ the $C^*$-algebra $\cT^{(\lambda)}(L^\infty(\B^n)_{\beta,E(n)})$ generated by Toeplitz operators with essentially bounded $\beta$-quasi-elliptic symbols acting on $\cA^2_\lambda(\B^n)$ is commutative.
\end{theorem}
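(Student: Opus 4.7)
The plan is to reduce this statement directly to Theorem~\ref{thm:Toeplitz_mu-functions} via the identifications already built up in Section~\ref{sec:commToep_momentmaps_Abelian}. Concretely, given the linearly independent set $\beta = \{v_1, \dots, v_m\} \subset \R^n$, I would first consider the subspace $\fh = \R\langle\beta\rangle \subset \R^n$ and define the subgroup $H = \exp(\fh) \subset \T^n$. By Proposition~\ref{prop:Abeliansubgroups} applied to the MASG $G = E(n) = \T^n$, this $H$ is a connected Abelian subgroup of the biholomorphism group of $\B^n$, with Lie algebra $\fh$ admitting $\beta$ as a basis.

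Next I would invoke Definition~\ref{def:beta-quasi-elliptic}, which states exactly that
\[
    L^\infty(\B^n)_{\beta,E(n)} = L^\infty(\B^n)^{\mu^H},
\]
so the $C^*$-algebra under consideration is
\[
    \cT^{(\lambda)}\bigl(L^\infty(\B^n)_{\beta,E(n)}\bigr)
        = \cT^{(\lambda)}\bigl(L^\infty(\B^n)^{\mu^H}\bigr).
\]

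With $H$ identified as a connected Abelian subgroup of the biholomorphisms of $\B^n$ with moment map $\mu^H$ (whose existence and explicit form is ensured by Propositions~\ref{prop:momentmap_AbelianfromMASG} and \ref{prop:momentmap_betaE(n)}), Theorem~\ref{thm:Toeplitz_mu-functions} applies verbatim: the $C^*$-algebra generated by Toeplitz operators whose symbols are essentially bounded $\mu^H$-functions is commutative for every $\lambda > -1$. This yields the desired commutativity of $\cT^{(\lambda)}(L^\infty(\B^n)_{\beta,E(n)})$, and as a bonus it gives the inclusion $\cT^{(\lambda)}(L^\infty(\B^n)_{\beta,E(n)}) \subset \cT^{(\lambda)}(L^\infty(\B^n)^{\T^n})$, consistent with Remark~\ref{rmk:Hmomentfunctions_Gmomentfunctions}.

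There is essentially no obstacle here, since the hard analytic work has already been done in Theorem~\ref{thm:Toeplitz_mu-functions} (which in turn rests on the classical commutativity result from \cite{QVUnitBall1} for MASGs). The only thing to check carefully is that the symbol set from Definition~\ref{def:beta-quasi-elliptic} is genuinely $L^\infty(\B^n)^{\mu^H}$ for the subgroup $H$ associated to $\beta$; this is immediate from Corollary~\ref{cor:momentmapfunctions_betaG} combined with Proposition~\ref{prop:momentmap_betaE(n)}(2), which both identify the functions $a_j(z) = \langle (|z_1|^2,\dots,|z_n|^2), v_j\rangle/(1-|z|^2)$ as generating the moment map functions for $H$, regardless of whether $\beta$ is orthogonal.
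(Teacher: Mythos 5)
Your proposal is correct and is precisely the argument the paper intends: the paper states that Theorem~\ref{thm:commToeplitz-beta-quasi-elliptic} is "a consequence of Theorem~\ref{thm:Toeplitz_mu-functions}," which is exactly your reduction via $H = \exp(\R\langle\beta\rangle)$ and the identification $L^\infty(\B^n)_{\beta,E(n)} = L^\infty(\B^n)^{\mu^H}$ from Definition~\ref{def:beta-quasi-elliptic}. Nothing is missing.
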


The next result shows that the commutative $C^*$-algebras obtained in Theorem~\ref{thm:commToeplitz-beta-quasi-elliptic} come, in some cases, from spaces of symbols not considered before. The particular case given in the statement can be generalized easily to obtain many other examples using the arguments of its proof.

\begin{proposition}\label{prop:betaquasielliptic-notkquasi-radial}
    If $n \geq 2$ and for $\beta = \{ e_1\}$, the space of $\beta$-quasi-elliptic symbols is different from the space of $k$-quasi-radial symbols defined by any partition $k$ of $n$. In other words, we have
    \[
        L^\infty(\B^n)_{\beta,E(n)} \neq L^\infty(\B^n)_{\beta(k),E(n)}
    \]
    for every partition $k$ of $n$, where $\beta(k)$ is defined in Example~\ref{ex:quasi-radial_partitions}.
\end{proposition}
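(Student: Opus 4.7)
The plan is to exhibit, for each partition $k=(k_1,\dots,k_m)$ of $n$, an explicit bounded measurable function that lies in exactly one of the two symbol spaces. By Proposition~\ref{prop:momentmap_betaE(n)} the $\{e_1\}$-quasi-elliptic symbols are essentially bounded measurable functions of $|z_1|^2/(1-|z|^2)$, and by Example~\ref{ex:quasi-radial_partitions} the $\beta(k)$-quasi-elliptic symbols are precisely the $k$-quasi-radial symbols, that is, essentially bounded measurable functions of $(|z_{(1)}|,\dots,|z_{(m)}|)$. Everything reduces to separating these two families, and I will split according to whether $k_1\geq 2$ or $k_1=1$.

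For $k_1\geq 2$, I claim that the bounded $\{e_1\}$-quasi-elliptic symbol $\phi(z)=\arctan\!\bigl(|z_1|^2/(1-|z|^2)\bigr)$ fails to be $k$-quasi-radial. Because $n\geq 2$ and $z_1,z_2$ lie in the same block $z_{(1)}$, the unitary transformation $U:\B^n\to\B^n$ that interchanges $z_1$ and $z_2$ preserves Lebesgue measure and fixes every $|z_{(j)}|$. Hence any function of the form $g(|z_{(1)}|,\dots,|z_{(m)}|)$ is $U$-invariant almost everywhere. If $\phi$ were $k$-quasi-radial, the identity $\phi=\phi\circ U$ would then force $|z_1|=|z_2|$ almost everywhere on $\B^n$, which is absurd.

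For $k_1=1$, I claim that the bounded $k$-quasi-radial symbol $\psi(z)=\arctan(|z_1|^2)$ fails to be $\{e_1\}$-quasi-elliptic. Consider the map $\Phi:\B^n\to T$, $\Phi(z)=(|z_1|^2,|z_{(2)}|^2)$, with $T=\{(s_1,s_2)\in\R_{\geq 0}^2 : s_1+s_2<1\}$; using $n\geq 2$, the pushforward of Lebesgue measure on $\B^n$ under $\Phi$ is absolutely continuous with strictly positive density on the interior of $T$. Both $\psi$ and any $f(|z_1|^2/(1-|z|^2))$ factor through $\Phi$, so any almost-everywhere identification $\psi=f(|z_1|^2/(1-|z|^2))$ on $\B^n$ descends to the equation $\arctan(s_1)=f\!\bigl(s_1/(1-s_1-s_2)\bigr)$ almost everywhere on $T$. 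Foliating $T$ by the line segments $\{s_1/(1-s_1-s_2)=c\}$ for $c>0$, along each segment the right side is the constant $f(c)$ while $\arctan(s_1)$ varies strictly with $s_1$; a Fubini argument in the coordinates $(c,s_1)$ then forces failure of the identification on a set of positive area in $T$, a contradiction.

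The main obstacle is the standard $L^\infty$ subtlety that symbol equality must be handled almost everywhere rather than pointwise. This is exactly why a naive comparison at a carefully chosen pair of test points is replaced by the measure-preserving symmetry argument in the case $k_1\geq 2$ and by the pushforward-plus-Fubini argument in the case $k_1=1$.
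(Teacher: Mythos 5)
Your proof is correct, but it is organized quite differently from the paper's. The paper first treats $k=(1,n-1)$, observes the inclusion $L^\infty(\B^n)_{\beta,E(n)}\subset L^\infty(\B^n)_{\beta(k),E(n)}$, and shows it is proper by checking that $a_2(z)=|z_{(2)}|$ cannot equal $f\bigl(|z_1|^2/(1-|z|^2)\bigr)$: restricting to the slice $z_{(2)}=0$ forces $f\equiv 0$. It then leaves general $m\geq 2$ as an exercise and disposes of the radial partition $k=(n)$ by a separate ad hoc reduction. You instead split on $k_1\geq 2$ versus $k_1=1$, and in the first case you separate the two spaces in the opposite direction, producing a $\beta$-quasi-elliptic symbol that fails to be $k$-quasi-radial via invariance under the coordinate swap $z_1\leftrightarrow z_2$; this single argument absorbs $k=(n)$, which the paper must handle separately. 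Your second case replaces the paper's evaluation on the null set $\{z_{(2)}=0\}$ by a pushforward-and-foliation argument, which is the more scrupulous treatment of the almost-everywhere equality implicit in $L^\infty$ (the paper's slice argument is only literally valid if the factorization $a=f\circ\mu^H$ is read as a pointwise identity). What the paper's route buys is brevity and the explicit record that for $k=(1,n-1)$ one actually has a proper inclusion of symbol spaces, which your dichotomy does not exhibit. One small notational caution: in your $k_1=1$ case the symbol $z_{(2)}$ must be read as $(z_2,\dots,z_n)$ rather than as the second block of the partition $k$, since otherwise $f\bigl(|z_1|^2/(1-|z|^2)\bigr)$ need not factor through your map $\Phi$ when $m\geq 3$; with that reading the argument is sound and, as it should be, independent of $k$.
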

\begin{proof}
    We first consider the partition $k = (1,n-1)$. Following the notation from Example~\ref{ex:quasi-radial_partitions}, to this partition corresponds the basis $\beta(k)$ that consists of the vectors
    \[
        v_1 = e_1, \quad v_2 = (0,1, \dots, 1).
    \]
    The $\beta$-quasi-elliptic symbols $a : \B^n \rightarrow \C$ are of the form
    \[
        a(z) = f\bigg( \frac{|z_1|^2}{1-|z|^2}\bigg)
    \]
    for some function $f$. Note that we can assume that $f$ is defined in $[0,\infty)$.

    On the other hand, by Example~\ref{ex:quasi-radial_partitions}, the $k$-quasi-radial symbols $a : \B^n \rightarrow \C$ are of the form
    \[
        a(z) = f(|z_1|, |z_{(2)}|)
    \]
    for some function $f$. In particular, for these choices of $\beta$ and $k$ every $\beta$-quasi-elliptic symbol is a $k$-quasi-radial symbol, so we have
    \[
        L^\infty(\B^n)_{\beta,E(n)} \subset L^\infty(\B^n)_{\beta(k),E(n)}.
    \]
    But it turns out that this inclusion is proper. To prove this consider the symbol $a_2 : \B^n \rightarrow \C$ defined by
    \[
        a_2(z) = |z_{(2)}|.
    \]
    Clearly, $a_2$ belongs to $L^\infty(\B^n)_{\beta(k),E(n)}$. If we assume the existence of a function $f : [0,\infty) \rightarrow \C$ such that
    \[
        |z_{(2)}| = a_2(z) = f\bigg( \frac{|z_1|^2}{1-|z|^2}\bigg)
    \]
    for all $z \in \B^n$, then by taking $z$ such that $z_{(2)} = 0$ we conclude
    \[
        0 = f\bigg( \frac{|z_1|^2}{1-|z_1|^2}\bigg)
    \]
    for every $z_1 \in \B^1$. But then we have $f \equiv 0$, which is a contradiction. This implies that
    \[
        a_2 \in L^\infty(\B^n)_{\beta(k),E(n)} \setminus
                L^\infty(\B^n)_{\beta,E(n)},
    \]
    thus proving the proper inclusion
    \[
        L^\infty(\B^n)_{\beta,E(n)} \subsetneq L^\infty(\B^n)_{\beta(k),E(n)}.
    \]

    It is a simple exercise to adapt this argument to show that for any $k \in \Z^m_+$ with $m \geq 2$ we have
    \[
        L^\infty(\B^n)_{\beta,E(n)} \neq L^\infty(\B^n)_{\beta(k),E(n)}.
    \]
    Finally, for the partition $k = (n)$, if the symbol given by $z \mapsto |z|$ belongs to $L^\infty(\B^n)_{\beta,E(n)}$, then it is not difficult to see that the symbol $z \mapsto |z_1|$ also belongs to $L^\infty(\B^n)_{\beta,E(n)}$. But this implies that the symbol $z \mapsto |(0, z_2, \dots, z_n)|$ is an element of $L^\infty(\B^n)_{\beta,E(n)}$ as well, which contradicts the previous arguments. This completes the proof that
    \[
        L^\infty(\B^n)_{\beta,E(n)} \neq L^\infty(\B^n)_{\beta(k),E(n)}
    \]
    for every partition $k$ of $n$.
\end{proof}

\subsection{$\beta$-quasi-parabolic symbols}
\label{subsec:beta-quasi-parabolic}
We now consider the group $G = \T^{n-1}\times \R$ acting by the quasi-parabolic action $P(n)$ on $D_n$ defined in Section~\ref{sec:MASG_momentmaps}. As before, we will consider some linearly independent set $\beta = \{v_1, \dots, v_m\} \subset \R^n$, and following Remark~\ref{rmk:momentmapfunctions_beta_matrices} we have a corresponding matrix $A(\beta)$ whose rows are $v_1, \dots, v_m$. Then, we have the group $H$ whose Lie algebra is $\fh = \R\langle\beta\rangle$. If the last column of $A(\beta)$ is zero, then $H$ is a subgroup of $\T^{n-1} \subset \T^{n-1}\times\R$. In this case, $H$ is in fact a subgroup of a compact MASG and, up to conjugacy of Lie groups, this has been discussed in the description of $\beta$-quasi-elliptic symbols. Hence, we can assume without lost of generality that the last column of $A(\beta)$ is non-zero as long as the case corresponding to $\beta$-quasi-elliptic symbols has already been considered.

The next result exhibits the moment maps for connected subgroups of $\T^{n-1}\times\R$. It is a consequence of Propositions~\ref{prop:momentmap_P(n)} and \ref{prop:momentmap_AbelianfromMASG-basis} and Corollary~\ref{cor:momentmapfunctions_betaG}.

\begin{proposition}\label{prop:momentmap_betaP(n)}
    Let $H$ be a connected Abelian subgroup of $\T^{n-1}\times\R$ whose Lie algebra $\fh$ has the basis $\beta = \{ v_1, \dots, v_m \}$. Then, the following hold.
    \begin{enumerate}
        \item If $\beta$ is orthogonal, then the moment map for the $H$-action on $D_n$ is the map $\mu^H : D_n \rightarrow \fh$ given by
            \[
                \mu(z) = -\frac{1}{2(\im(z_n)-|z'|^2)} \sum_{j=1}^m
                    \frac{\langle(2|z_1|^2, \dots, 2|z_{n-1}|^2, 1),v_j\rangle}{\langle v_j,v_j\rangle} v_j,
            \]
            for every $z \in D_n$.
        \item If $\beta$ is an arbitrary basis, then the space of essentially bounded moment map functions for $H$ is given by
            \[
                L^\infty(D_n)^{\mu^H} = \{ f(a_1, \dots, a_m) \mid
                    f \in L^\infty((a_1, \dots, a_m)(D_n)) \},
            \]
            where $a_j : D_n \rightarrow \R$ are defined by
            \[
                a_j(z) = \frac{1}{2(\im(z_n)-|z'|^2)}
                    \langle(2|z_1|^2, \dots, 2|z_{n-1}|^2, 1),v_j\rangle,
            \]
            for every $z \in D_n$ and for every $j = 1, \dots, m$.
    \end{enumerate}
\end{proposition}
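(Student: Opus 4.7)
The plan is to obtain both statements directly by substituting the explicit formula of the moment map of the ambient MASG $G = \T^{n-1}\times\R$ (the quasi-parabolic action $P(n)$), supplied by Proposition~\ref{prop:momentmap_P(n)}, into the general results of Proposition~\ref{prop:momentmap_AbelianfromMASG-basis} and Corollary~\ref{cor:momentmapfunctions_betaG}. No new analytic content is required; the proposition is essentially an explicit unpacking of those general principles for this particular MASG.

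For part~(1), I would start from
\[
    \mu^G(z) = -\frac{1}{2(\im(z_n)-|z'|^2)}(2|z_1|^2, \dots, 2|z_{n-1}|^2, 1),
\]
which Proposition~\ref{prop:momentmap_P(n)} gives as the moment map for the $P(n)$-action. When $\beta = \{v_1,\dots,v_m\}$ is an orthogonal basis of $\fh$, Proposition~\ref{prop:momentmap_AbelianfromMASG-basis} states that
\[
    \mu^H(z) = \sum_{j=1}^m \frac{\langle \mu^G(z), v_j\rangle}{\langle v_j,v_j\rangle}\, v_j.
\]
Since the scalar $-\tfrac{1}{2(\im(z_n)-|z'|^2)}$ is independent of $j$, bilinearity of the inner product lets us pull it outside the sum, giving exactly the stated expression for $\mu^H(z)$.

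For part~(2), I would invoke Corollary~\ref{cor:momentmapfunctions_betaG}, which (for arbitrary, not necessarily orthogonal, basis $\beta$) describes $L^\infty(D_n)^{\mu^H}$ as the set of symbols of the form $f(a_1,\dots,a_m)$ with $a_j(z) = \langle \mu^G(z), v_j\rangle$. Substituting the formula for $\mu^G$, each $a_j$ equals
\[
    a_j(z) = -\frac{1}{2(\im(z_n)-|z'|^2)} \langle (2|z_1|^2,\dots,2|z_{n-1}|^2,1), v_j\rangle.
\]
The overall minus sign is harmless: replacing $f$ by $\tilde f(s_1,\dots,s_m) = f(-s_1,\dots,-s_m)$ is a bijection of $L^\infty$ onto itself, so dropping the sign in the formula for $a_j$ yields the same class of symbols as stated. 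This is the only small point to verify, and it is the mildest step of the proof; there is no genuine obstacle here since the proposition is, by design, a direct specialization of the general machinery already established.
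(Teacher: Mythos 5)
Your proposal is correct and follows exactly the route the paper itself takes: the paper offers no separate proof for this proposition, stating only that it is a consequence of Propositions~\ref{prop:momentmap_P(n)} and \ref{prop:momentmap_AbelianfromMASG-basis} and Corollary~\ref{cor:momentmapfunctions_betaG}, which is precisely the substitution you carry out. Your extra remark about absorbing the overall minus sign into $f$ is a valid (and worthwhile) clarification of a detail the paper leaves implicit.
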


The moment map functions from the previous result yield the following special symbols.

\begin{definition}\label{def:beta-quasi-parabolic}
    Let $\beta = \{ v_1, \dots, v_m \} \subset \R^n$ be a linearly independent set. An essentially bounded function $a : D_n \rightarrow \C$ is called a $\beta$-quasi-parabolic symbol if there is a measurable function $f$ such that
    \[
        a = f(a_1, \dots, a_m),
    \]
    where $a_1, \dots, a_m$ are given as in Proposition~\ref{prop:momentmap_betaP(n)}. The space of all essentially bounded $\beta$-quasi-parabolic symbols will be denoted by $L^\infty(D_n)_{\beta,P(n)}$. In other words, we have
    \[
        L^\infty(D_n)_{\beta,P(n)} = L^\infty(D_n)^{\mu^H},
    \]
    where $H = \exp(\R\langle\beta\rangle)$.
\end{definition}

\begin{example}[Parabolic quasi-radial symbols as $\beta$-quasi-parabolic symbols]
\label{ex:beta-quasi-parabolic_partitions}
    Let us consider a partition $k \in \Z_+^m$ with $m \geq 2$ such that $k_m = 1$, so we can write $k = (k',1)$ where $k' \in \Z_+^{m-1}$ is a partition of $n-1$. Following the notation of Example~\ref{ex:quasi-radial_partitions} we define the linearly independent set $\beta(k) = \{v_1, \dots, v_m\}$ where we choose
    \[
        v_j = (0, 1_{k_j}, 0)
    \]
    for every $j = 1, \dots, m$, where the non-zero entries occur exactly at the indices $k_0 + \dots + k_{j-1}+1, \dots, k_0 + \dots + k_j$. In particular, we have $v_m = e_n$ since $k_m = 1$. In the notation of Remark~\ref{rmk:momentmapfunctions_beta_matrices}, the matrix associated to $\beta(k)$ is given by
    \[
        A(\beta(k)) =
        \begin{pmatrix}
          1_{k_1} & 0 & \cdots & 0 & 0 \\
          0 & 1_{k_2} & \cdots & 0  & 0 \\
          \vdots & \vdots & \vdots & \vdots & \vdots \\
          0 & 0 & \cdots & 1_{k_{m-1}} & 0 \\
          0 & 0 & \cdots & 0 & 1
        \end{pmatrix}.
    \]
    For the basis $\beta(k)$ just defined, the functions $a_j$ from Proposition~\ref{prop:momentmap_betaP(n)} are given by
    \[
        a_j(z) =
        \begin{cases}
            \displaystyle
            \frac{|z_{(k_j)}|^2}{\im(z_n)-|z'|^2},
                    & \mbox{if } j=1, \dots, m-1 \\
            \displaystyle
            \frac{1}{2(\im(z_n)-|z'|^2)}, & \mbox{if } j = m,
        \end{cases}
    \]
    for all $z \in D_n$. Using a simple change of variables, it follows that the $\beta(k)$-quasi-parabolic symbols are exactly the functions of $(|z_{(1)}|, \dots, |z_{(m-1)}|, \im(z_n))$. More precisely, we have
    \[
        a \in L^\infty(D_n)_{\beta(k),P(n)} \iff
        a(z) = f(|z_{(1)}|, \dots, |z_{(m-1)}|, \im(z_n)),
    \]
    for some essentially bounded function $f$. It follows that, for this particular choice of $\beta(k)$, the $\beta(k)$-quasi-parabolic symbols from Definition~\ref{def:beta-quasi-parabolic} are precisely the parabolic $k$-quasi-radial symbols from \cite{VasilevskiParabolicQuasiRadial2010}.
\end{example}

By the previous example the next result generalizes the commutativity of Toeplitz operators with quasi-parabolic symbols stated in \cite{QVUnitBall1} and with parabolic $k$-quasi-radial symbols stated in \cite{VasilevskiParabolicQuasiRadial2010}. This result is a consequence of Theorem~\ref{thm:Toeplitz_mu-functions}.

\begin{theorem}\label{thm:commToeplitz-beta-quasi-parabolic}
    Let $\beta = \{ v_1, \dots, v_m\} \subset \R^n$ be a linearly independent set. Then, for every $\lambda > -1$ the $C^*$-algebra $\cT^{(\lambda)}(L^\infty(D_n)_{\beta,P(n)})$ generated by Toeplitz operators with essentially bounded $\beta$-quasi-parabolic symbols acting on $\cA^2_\lambda(D_n)$ is commutative.
\end{theorem}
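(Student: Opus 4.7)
The plan is to show that this is essentially a direct application of Theorem~\ref{thm:Toeplitz_mu-functions}, with the main work being a careful check that the definitions line up, since the machinery has been fully built by this point.

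First, I would set $H = \exp(\R\langle\beta\rangle)$. By Proposition~\ref{prop:Abeliansubgroups} applied to the MASG $G = P(n) = \T^{n-1}\times\R$, this $H$ is a connected Abelian subgroup of the group of biholomorphisms of $D_n$ whose Lie algebra $\fh$ coincides with $\R\langle\beta\rangle$. In particular, $\beta$ is a basis of $\fh$, so Proposition~\ref{prop:momentmap_betaP(n)} computes the moment map $\mu^H$ of the $H$-action on $D_n$ and describes the essentially bounded moment map functions $L^\infty(D_n)^{\mu^H}$ as those of the form $f(a_1,\dots,a_m)$, where $a_1,\dots,a_m$ are the functions constructed from $\beta$ via the quasi-parabolic moment map.

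Second, by Definition~\ref{def:beta-quasi-parabolic} we have the equality
\[
    L^\infty(D_n)_{\beta,P(n)} = L^\infty(D_n)^{\mu^H}.
\]
At this point Theorem~\ref{thm:Toeplitz_mu-functions} applies verbatim to the connected Abelian subgroup $H$ with its moment map $\mu^H$, yielding the commutativity of $\cT^{(\lambda)}(L^\infty(D_n)^{\mu^H})$ on every weighted Bergman space $\cA^2_\lambda(D_n)$ for $\lambda > -1$, together with the containment in $\cT^{(\lambda)}(L^\infty(D_n)^{P(n)})$.

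There is essentially no obstacle here: all the hard work (constructing the moment map for $P(n)$, proving the moment-map formula for subgroups via orthogonal projection, and above all Theorem~\ref{thm:Toeplitz_mu-functions} itself, which relies on \cite{QVUnitBall1}) has already been carried out. The only minor point to note is that the discussion preceding Definition~\ref{def:beta-quasi-parabolic} allows, without loss of generality, the last column of $A(\beta)$ to be nonzero; the case where it vanishes reduces to $\beta$-quasi-elliptic symbols up to conjugacy, already covered by Theorem~\ref{thm:commToeplitz-beta-quasi-elliptic}. Either way, the statement for the subgroup $H$ at hand follows from Theorem~\ref{thm:Toeplitz_mu-functions}, so the proof amounts to a single invocation.
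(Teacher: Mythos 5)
Your proof is correct and matches the paper's own argument, which simply states that the theorem is a consequence of Theorem~\ref{thm:Toeplitz_mu-functions} via the identification $L^\infty(D_n)_{\beta,P(n)} = L^\infty(D_n)^{\mu^H}$ for $H = \exp(\R\langle\beta\rangle)$. Your write-up just makes the (one-line) reduction explicit.
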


As in the $\beta$-quasi-elliptic case, we now prove that for some $\beta$ the $\beta$-quasi-parabolic symbols are not given as a set of parabolic $k$-quasi-radial symbols. Again, the argument shows that many other examples are possible.

\begin{proposition}\label{prop:betaquasiparabolic-notparabolickquasi-radial}
    Assume that $n \geq 3$. For $\beta = \{ e_1, e_n\}$, the space of $\beta$-quasi-parabolic symbols is different from the space of parabolic $k$-quasi-radial symbols defined by any partition $k$ of $n$ of the form $k = (k',1)$. In other words, we have
    \[
        L^\infty(D_n)_{\beta,P(n)} \neq L^\infty(D_n)_{\beta(k),P(n)}
    \]
    for every partition $k = (k',1)$ of $n$ such that $k'$ is a partition of $n-1$, where $\beta(k)$ is defined in Example~\ref{ex:beta-quasi-parabolic_partitions}.
\end{proposition}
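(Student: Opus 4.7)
The strategy is to exhibit a bounded measurable function of $\im(z_n)$ alone that is not a $\beta$-quasi-parabolic symbol. A convenient candidate is $a(z) = \sin(\im(z_n))$, or any bounded $g \circ \im$ where $g$ is non-constant on every unbounded subinterval of $(0,\infty)$. By Example~\ref{ex:beta-quasi-parabolic_partitions}, $a$ is automatically a parabolic $k$-quasi-radial symbol for every partition $k = (k',1)$ of $n$, giving $a \in L^\infty(D_n)_{\beta(k),P(n)}$ with no further work.

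To show $a \notin L^\infty(D_n)_{\beta,P(n)}$ with $\beta = \{e_1,e_n\}$, I would invoke Proposition~\ref{prop:momentmap_betaP(n)}(2): after the obvious invertible change of variables in the arguments, membership would yield a measurable $\tilde{f}$ with
\[
\sin(\im(z_n)) = \tilde{f}\bigl(|z_1|^2,\, \im(z_n) - |z'|^2\bigr)
\]
for almost every $z \in D_n$. Switching to the coordinates $(z_1,\dots,z_{n-1},\re(z_n),v)$ with $v = \im(z_n) - |z'|^2 > 0$ identifies $D_n$ with $\C^{n-1}\times\R\times(0,\infty)$ equipped with a measure absolutely continuous with respect to Lebesgue, and the identity becomes $\sin(v + |z'|^2) = \tilde{f}(|z_1|^2,v)$. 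Since neither side depends on $\re(z_n)$, Fubini lets me discard that factor and work on $\C^{n-1}\times(0,\infty)$.

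The heart of the argument is then a second Fubini step: fix $(z_1,v)$ in a set of positive measure for which the equation holds for almost every $(z_2,\dots,z_{n-1}) \in \C^{n-2}$. Writing $s = |z_2|^2 + \cdots + |z_{n-1}|^2$, the hypothesis $n \geq 3$ ensures that $\C^{n-2}$ is nontrivial and that the pushforward of Lebesgue measure under $(z_2,\dots,z_{n-1}) \mapsto s$ is absolutely continuous on $[0,\infty)$. Therefore $\sin(v + |z_1|^2 + s) = \tilde{f}(|z_1|^2,v)$, independent of $s$, for almost every $s \in [0,\infty)$, which is impossible for $\sin$. The main technical obstacle I anticipate is precisely this measure-theoretic bookkeeping needed to pass from the almost-everywhere equality to a contradiction about $\sin$; once the coordinate change and the two Fubini applications are in place, the conclusion is immediate, and the same argument transparently generalizes to many other linearly independent $\beta$ beyond those of Example~\ref{ex:beta-quasi-parabolic_partitions}.
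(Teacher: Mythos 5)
Your proposal is correct, but it takes a genuinely different route from the paper. The paper separates the two symbol spaces by exhibiting, for each partition, a parabolic quasi-radial symbol that is not $\beta$-quasi-parabolic: for $k=(1,n-2,1)$ it uses $a_2(z)=|z_{(2)}|$, shows the inclusion $L^\infty(D_n)_{\beta,P(n)}\subset L^\infty(D_n)_{\beta(k),P(n)}$ is proper by restricting to $z_{(2)}=0$, and then adapts the argument partition by partition, with a separate treatment of $k=(n-1,1)$. You instead use the single witness $a(z)=\sin(\im(z_n))$, which lies in $L^\infty(D_n)_{\beta(k),P(n)}$ for \emph{every} partition of the form $(k',1)$ at once (since $\im(z_n)$ is always among the permitted arguments), and you rule out membership in $L^\infty(D_n)_{\beta,P(n)}$ by observing that such a symbol could depend on $\im(z_n)$ only through $\im(z_n)-|z'|^2$ and $|z_1|$, so that freezing $(|z_1|, \im(z_n)-|z'|^2)$ and varying $|z_2|^2+\cdots+|z_{n-1}|^2$ (possible precisely because $n\ge 3$) forces $\sin$ to be constant on a half-line, a contradiction. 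Your argument is more uniform and avoids the case analysis entirely; it also treats the almost-everywhere issues more carefully than the paper, which argues pointwise. What it does not give, and what the paper's version does, is the additional information that for $k=(1,n-2,1)$ the containment $L^\infty(D_n)_{\beta,P(n)}\subsetneq L^\infty(D_n)_{\beta(k),P(n)}$ is a proper inclusion rather than mere inequality of sets; but that refinement is not part of the stated proposition. Both proofs are valid.
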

\begin{proof}
    Let us first consider the partition $k = (1,n-2,1)$. Following the notation from Example~\ref{ex:beta-quasi-parabolic_partitions}, this partition has a corresponding basis $\beta(k)$ that consists of the vectors
    \[
        v_1 = e_1, \quad v_2 = (0,1, \dots, 1, 0), \quad v_3 = e_n.
    \]
    By Proposition~\ref{prop:momentmap_betaP(n)} we see that $\beta$-quasi-parabolic symbols are functions of the expression
    \[
        \bigg(
            \frac{|z_1|^2}{\im(z_n)-|z'|^2}, \im(z_n)-|z'|^2
        \bigg).
    \]
    Hence, a symbol $a : D_n \rightarrow \C$ is $\beta$-quasi-parabolic if and only if it is of the form
    \[
        a(z) = f(|z_1|, \im(z_n) - |z'|^2)
    \]
    for some function $f$. Note that we can assume that $f$ is defined in $[0,\infty)^2$.

    On the other hand, by Example~\ref{ex:quasi-radial_partitions} the parabolic $k$-quasi-radial symbols $a : D_n \rightarrow \C$ are of the form
    \[
        a(z) = f(|z_1|, |z_{(2)}|, \im(z_n))
    \]
    for some function $f$. In particular, for these choices of $\beta$ and $k$ every $\beta$-quasi-parabolic symbol is a parabolic $k$-quasi-radial symbol, so we have
    \[
        L^\infty(D_n)_{\beta,P(n)} \subset L^\infty(D_n)_{\beta(k),P(n)}.
    \]
    However, this inclusion is proper. To see this consider the symbol $a : D_n \rightarrow \C$ defined by
    \[
        a_2(z) = |z_{(2)}|.
    \]
    Clearly, $a_2$ belongs to $L^\infty(D_n)_{\beta(k),P(n)}$. If we assume the existence of a function $f : [0,\infty)^2 \rightarrow \C$ such that
    \[
        |z_{(2)}| = a_2(z) = f(|z_1|, \im(z_n) - |z'|^2)
    \]
    for all $z \in D_n$, then by taking $z$ such that $z_{(2)} = 0$ we conclude
    \[
        0 = f(|z_1|, \im(z_n)-|z_1|^2)
    \]
    for every $(z_1,z_n) \in D_2$. This clearly implies $f \equiv 0$, which is a contradiction. Hence we have
    \[
        a_2 \in L^\infty(D_n)_{\beta(k),P(n)} \setminus
                L^\infty(D_n)_{\beta,P(n)},
    \]
    thus proving the proper inclusion
    \[
        L^\infty(D_n)_{\beta,P(n)} \subsetneq L^\infty(D_n)_{\beta(k),P(n)}.
    \]

    It is a simple exercise to adapt this argument to show that for any $k \in \Z^m_+$ so that $k = (k',1)$ and with $m \geq 3$ we have
    \[
        L^\infty(D_n)_{\beta,P(n)} \neq L^\infty(D_n)_{\beta(k),P(n)}.
    \]
    Finally, for the partition $k = (n-1,1)$, if the symbol given by $z \mapsto |z'|$ belongs to $L^\infty(D_n)_{\beta,P(n)}$, then the symbol $z \mapsto |(0, z_2, \dots, z_{n-1}, 0)|$ is an element of $L^\infty(D_n)_{\beta,P(n)}$ as well, which contradicts the previous arguments. This completes the proof that
    \[
        L^\infty(D_n)_{\beta,P(n)} \neq L^\infty(D_n)_{\beta(k),P(n)}
    \]
    for every partition $k$ of $n$ such that $k = (k',1)$.
\end{proof}

\subsection{$\beta$-quasi-hyperbolic symbols}
Let us now consider the group $G = \T^{n-1}\times \R_+$ acting by the quasi-hyperbolic action $H(n)$ on $D_n$ defined in Section~\ref{sec:MASG_momentmaps}. A remark as the one given at the beginning of Subsection~\ref{subsec:beta-quasi-parabolic} can be applied in this case. In other words, for a linearly independent set $\beta = \{v_1, \dots, v_m\} \subset \R^n$ taken as a basis of the Lie algebra $\fh$ of the connected subgroup $H$ of $\T^{n-1}\times \R_+$, we can assume that the last column of $A(\beta)$ is non-zero.

We now describe the moment maps for connected subgroups of $\T^{n-1}\times\R_+$. This result is a consequence of Propositions~\ref{prop:momentmap_H(n)} and \ref{prop:momentmap_AbelianfromMASG-basis} and Corollary~\ref{cor:momentmapfunctions_betaG}.

\begin{proposition}\label{prop:momentmap_betaH(n)}
    Let $H$ be a connected Abelian subgroup of $\T^{n-1}\times\R_+$ whose Lie algebra $\fh$ has the basis $\beta = \{ v_1, \dots, v_m \}$. Then, the following hold.
    \begin{enumerate}
        \item If $\beta$ is orthogonal, then the moment map for the $H$-action on $D_n$ is the map $\mu^H : D_n \rightarrow \fh$ given by
        \[
            \mu(z) = -\frac{1}{2(\im(z_n)-|z'|^2)} \sum_{j=1}^m
                \frac{\langle(2|z_1|^2, \dots, 2|z_{n-1}|^2, \re(z_n)),v_j\rangle}{\langle v_j,v_j\rangle} v_j,
        \]
        for every $z \in D_n$.
        \item If $\beta$ is an arbitrary basis, then the space of essentially bounded moment map functions for $H$ is given by
        \[
            L^\infty(D_n)^{\mu^H} = \{ f(a_1, \dots, a_m) \mid
                f \in L^\infty((a_1, \dots, a_m)(D_n)) \},
        \]
        where $a_j : D_n \rightarrow \R$ are defined by
        \[
            a_j(z) = \frac{1}{2(\im(z_n)-|z'|^2)}
                \langle(2|z_1|^2, \dots, 2|z_{n-1}|^2, \re(z_n)),v_j\rangle,
        \]
        for every $z \in D_n$ and for every $j = 1, \dots, m$.
    \end{enumerate}

\end{proposition}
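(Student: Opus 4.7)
The plan is to obtain both statements by direct specialization of the three earlier results cited just before the proposition. For the quasi-hyperbolic MASG $G = \T^{n-1}\times\R_+$ acting on $D_n$, Proposition~\ref{prop:momentmap_H(n)} already gives the explicit formula
\[
    \mu^G(z) = -\frac{1}{2(\im(z_n)-|z'|^2)}(2|z_1|^2,\dots,2|z_{n-1}|^2,\re(z_n)),
\]
so the only remaining work is to feed this into the general subgroup formulas.

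For part (1), I would invoke Proposition~\ref{prop:momentmap_AbelianfromMASG-basis} with this choice of $G$. That proposition asserts that, whenever $\beta = \{v_1,\dots,v_m\}$ is an orthogonal basis of $\fh$, the moment map of the $H$-action is
\[
    \mu^H(z) = \sum_{j=1}^m
        \frac{\langle \mu^G(z), v_j\rangle}{\langle v_j,v_j\rangle}\, v_j.
\]
Substituting the explicit expression for $\mu^G$ above and pulling the scalar factor $-1/(2(\im(z_n)-|z'|^2))$ out of the inner product gives exactly the displayed formula in part (1).

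For part (2), I would invoke Corollary~\ref{cor:momentmapfunctions_betaG}, which characterizes $L^\infty(D_n)^{\mu^H}$ as the set of essentially bounded functions of the component functions
\[
    a_j(z) = \langle \mu^G(z), v_j\rangle, \qquad j=1,\dots,m,
\]
for any basis $\beta = \{v_1,\dots,v_m\}$ of $\fh$, orthogonal or not. Substituting the formula for $\mu^G$ from Proposition~\ref{prop:momentmap_H(n)} and again extracting the scalar factor produces exactly the $a_j$ written in the statement. (The sign can be absorbed into the arbitrary function $f$, so it does not alter the set of symbols obtained.)

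There is no real obstacle here: both conclusions are obtained by plugging $\mu^G$ from Proposition~\ref{prop:momentmap_H(n)} into the general subgroup statements of Proposition~\ref{prop:momentmap_AbelianfromMASG-basis} and Corollary~\ref{cor:momentmapfunctions_betaG}. The only point that requires a brief word is the remark already made at the beginning of the subsection explaining why one may assume the last column of $A(\beta)$ is non-zero without loss of generality (otherwise $H$ lies in the compact torus factor and the statement reduces to the quasi-elliptic case, already handled). With that reduction the argument is fully mechanical.
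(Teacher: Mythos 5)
Your proposal is correct and follows exactly the route the paper intends: the paper gives no separate proof but states that the proposition is a consequence of Proposition~\ref{prop:momentmap_H(n)}, Proposition~\ref{prop:momentmap_AbelianfromMASG-basis} and Corollary~\ref{cor:momentmapfunctions_betaG}, which is precisely the substitution you carry out. Your added remark that the overall sign is absorbed into the arbitrary function $f$ correctly accounts for the only cosmetic discrepancy between the corollary's $a_j(z)=\langle\mu^G(z),v_j\rangle$ and the $a_j$ displayed in the statement.
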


In this case, we obtain from the previous result the following special symbols.

\begin{definition}\label{def:beta-quasi-hyperbolic}
    Let $\beta = \{ v_1, \dots, v_m \} \subset \R^n$ be a linearly independent set. An essentially bounded function $a : D_n \rightarrow \C$ is called a $\beta$-quasi-hyperbolic symbol if there is a measurable function $f$ such that
    \[
        a = f(a_1, \dots, a_m),
    \]
    where $a_1, \dots, a_m$ are given as in Proposition~\ref{prop:momentmap_betaH(n)}. The space of all essentially bounded $\beta$-quasi-hyperbolic symbols will be denoted by $L^\infty(D_n)_{\beta,H(n)}$. In other words, we have
    \[
        L^\infty(D_n)_{\beta,H(n)} = L^\infty(D_n)^{\mu^H},
    \]
    where $H = \exp(\R\langle\beta\rangle)$.
\end{definition}

As noted in Remark~\ref{rmk:momentmapfunctions_beta_matrices}, for $H = \T^{n-1}\times\R_+$ and for any basis $\beta$ of $\R^n$, the $\beta$-quasi-hyperbolic symbols are precisely the quasi-hyperbolic symbols from \cite{QVUnitBall1}. Hence, we now obtain the following generalization of the commutativity of Toeplitz operators with quasi-hyperbolic symbols stated in \cite{QVUnitBall1}. This result is a consequence of Theorem~\ref{thm:Toeplitz_mu-functions}.

\begin{theorem}\label{thm:commToeplitz-beta-quasi-hyperbolic}
    Let $\beta = \{ v_1, \dots, v_m\} \subset \R^n$ be a linearly independent set. Then, for every $\lambda > -1$ the $C^*$-algebra $\cT^{(\lambda)}(L^\infty(D_n)_{\beta,H(n)})$ generated by Toeplitz operators with essentially bounded $\beta$-quasi-hyperbolic symbols acting on $\cA^2_\lambda(D_n)$ is commutative.
\end{theorem}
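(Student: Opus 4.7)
The plan is to reduce the statement directly to Theorem~\ref{thm:Toeplitz_mu-functions}, which already guarantees commutativity of the $C^*$-algebra generated by Toeplitz operators whose symbols are moment map functions for any connected Abelian subgroup of biholomorphisms of $D_n$. The role of the hypothesis in this theorem is simply to package a moment map description of the symbol class under a choice of basis $\beta$.

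First, I would set $H = \exp(\R\langle\beta\rangle)$, where the exponential is taken inside the MASG $\T^{n-1}\times\R_+$ realizing the quasi-hyperbolic action. By Proposition~\ref{prop:Abeliansubgroups}, $H$ is a connected Abelian subgroup of the group of biholomorphisms of $D_n$, and its Lie algebra is exactly $\fh = \R\langle\beta\rangle$ with $\beta$ as a basis.

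Next, I would invoke Definition~\ref{def:beta-quasi-hyperbolic} to identify
\[
    L^\infty(D_n)_{\beta,H(n)} = L^\infty(D_n)^{\mu^H},
\]
so that the $C^*$-algebra in question coincides with $\cT^{(\lambda)}(L^\infty(D_n)^{\mu^H})$. Here $\mu^H$ is the moment map for the $H$-action, whose explicit form is provided by Proposition~\ref{prop:momentmap_betaH(n)}, but for the purpose of this proof only its existence as a moment map is needed.

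Finally, Theorem~\ref{thm:Toeplitz_mu-functions} applied to $H$ yields the commutativity of $\cT^{(\lambda)}(L^\infty(D_n)^{\mu^H})$ for every $\lambda > -1$, which is the desired conclusion. There is no real obstacle here; the entire content of the statement is bookkeeping: Definition~\ref{def:beta-quasi-hyperbolic} was crafted precisely so that $\beta$-quasi-hyperbolic symbols are moment map functions for the connected Abelian subgroup associated to $\beta$, and the commutativity is then a direct corollary of the general Abelian result proved earlier. The only subtlety worth flagging is the remark opening Subsection~\ref{subsec:beta-quasi-parabolic}, namely that if the last column of the matrix $A(\beta)$ is zero, then $H$ is conjugate to a subgroup of $\T^n$ and the statement reduces to the $\beta$-quasi-elliptic case; this does not affect the argument since Theorem~\ref{thm:Toeplitz_mu-functions} applies to any connected Abelian subgroup.
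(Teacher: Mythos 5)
Your proposal is correct and matches the paper's own argument: the paper derives Theorem~\ref{thm:commToeplitz-beta-quasi-hyperbolic} directly from Theorem~\ref{thm:Toeplitz_mu-functions} after identifying $L^\infty(D_n)_{\beta,H(n)}$ with $L^\infty(D_n)^{\mu^H}$ for $H = \exp(\R\langle\beta\rangle)$, exactly as you do. The extra remarks about Proposition~\ref{prop:Abeliansubgroups} and the last column of $A(\beta)$ are harmless bookkeeping that the paper leaves implicit.
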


\subsection{$\beta$-nilpotent symbols}
Consider the group $G = \R^n$ acting by the nilpotent action $N(n)$ on $D_n$ defined in Section~\ref{sec:MASG_momentmaps}. The moment maps for connected subgroups of $\R^n$ is given in the next result. It is a consequence of Propositions~\ref{prop:momentmap_N(n)} and \ref{prop:momentmap_AbelianfromMASG-basis} and Corollary~\ref{cor:momentmapfunctions_betaG}.

\begin{proposition}\label{prop:momentmap_betaN(n)}
    Let $H$ be a connected Abelian subgroup of $\R^n$ whose Lie algebra $\fh$ has the basis $\beta = \{ v_1, \dots, v_m \}$. Then, the following hold.
    \begin{enumerate}
        \item If $\beta$ is orthogonal, then the moment map for the $H$-action on $D_n$ is the map $\mu^H : D_n \rightarrow \fh$ given by
        \[
            \mu(z) = -\frac{1}{2(\im(z_n)-|z'|^2)} \sum_{j=1}^m
                \frac{\langle (-4\im(z'),1),v_j\rangle}{\langle v_j,v_j\rangle} v_j,
        \]
        for every $z \in D_n$.
    \item If $\beta$ is an arbitrary basis, then the space of essentially bounded moment map functions for $H$ is given by
        \[
            L^\infty(D_n)^{\mu^H} = \{ f(a_1, \dots, a_m) \mid
                f \in L^\infty((a_1, \dots, a_m)(D_n)) \},
        \]
        where $a_j : D_n \rightarrow \R$ are defined by
        \[
            a_j(z) = \frac{1}{2(\im(z_n)-|z'|^2)}
                \langle(-4\im(z'),1),v_j\rangle,
        \]
        for every $z \in D_n$ and for every $j = 1, \dots, m$.
    \end{enumerate}
\end{proposition}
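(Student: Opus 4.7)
The plan is to observe that the proposition is a direct specialization of the general framework developed in the preceding results, applied with $G = \R^n$ the MASG acting on $D_n$ via the nilpotent action $N(n)$, of which $H$ is by hypothesis a connected subgroup. The only ingredient specific to this case is the explicit formula for the $G$-moment map, which is supplied by Proposition~\ref{prop:momentmap_N(n)}:
\[
    \mu^G(z) = -\frac{1}{2(\im(z_n)-|z'|^2)}(-4\im(z'), 1).
\]
Once this is in hand, both parts follow by substitution into two general results already proved, so there is no new geometric content to establish.

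For part (1), assuming $\beta = \{v_1, \dots, v_m\}$ is orthogonal, Proposition~\ref{prop:momentmap_AbelianfromMASG-basis} gives the formula
\[
    \mu^H(z) = \sum_{j=1}^m \frac{\langle \mu^G(z), v_j\rangle}{\langle v_j, v_j\rangle} v_j.
\]
Using bilinearity of the canonical inner product on $\R^n$ to pull the scalar $-\tfrac{1}{2(\im(z_n)-|z'|^2)}$ out of each $\langle \mu^G(z), v_j\rangle$ yields exactly the displayed expression for $\mu^H(z)$.

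For part (2), Corollary~\ref{cor:momentmapfunctions_betaG} describes $L^\infty(D_n)^{\mu^H}$, for an arbitrary basis $\beta$, as the collection of essentially bounded functions of the form $f(a_1, \dots, a_m)$ with $a_j(z) = \langle \mu^G(z), v_j\rangle$; substituting the explicit formula for $\mu^G$ into this inner product produces the stated expression for $a_j$. The only bookkeeping points are keeping track of the signs (noting that the factor $-4\im(z')$ sits inside the inner product, while the overall minus sign is absorbed into the prefactor) and consistently using the identifications $\fh \cong \fh^*$ and $\fg \cong \fg^* = \R^n$ supplied by the canonical inner product, as in Section~\ref{sec:MASG_momentmaps}. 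No genuine obstacle arises, which is precisely why the proposition can be stated as a direct consequence of the three cited results.
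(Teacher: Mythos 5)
Your proposal is correct and follows exactly the route the paper takes: the paper states this proposition as a direct consequence of Propositions~\ref{prop:momentmap_N(n)} and \ref{prop:momentmap_AbelianfromMASG-basis} and Corollary~\ref{cor:momentmapfunctions_betaG}, which is precisely the substitution argument you give. The only cosmetic point is that the $a_j$ in the statement equal $-\langle \mu^G(z), v_j\rangle$ rather than $\langle \mu^G(z), v_j\rangle$, but this changes nothing since the class of functions of $(a_1,\dots,a_m)$ is invariant under such sign flips, consistent with the paper's conventions in the other $\beta$-symbol subsections.
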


In this case, we obtain the following special symbols.

\begin{definition}\label{def:beta-nilpotent}
    Let $\beta = \{ v_1, \dots, v_m \} \subset \R^n$ be a linearly independent set. An essentially bounded function $a : D_n \rightarrow \C$ is called a $\beta$-nilpotent symbol if there is a measurable function $f$ such that
    \[
        a = f(a_1, \dots, a_m),
    \]
    where $a_1, \dots, a_m$ are given as in Proposition~\ref{prop:momentmap_betaN(n)}. The space of all essentially bounded $\beta$-nilpotent symbols will be denoted by $L^\infty(D_n)_{\beta,N(n)}$. In other words, we have
    \[
        L^\infty(D_n)_{\beta,N(n)} = L^\infty(D_n)^{\mu^H},
    \]
    where $H = \exp(\R\langle\beta\rangle)$.
\end{definition}

\begin{example}[Some nilpotent symbols (\cite{SVNilpotentDuduchava2017}) as $\beta$-nilpotent symbols]
    \label{ex:beta-nilpotent-noradial}
    As it is well known (see \cite{QVUnitBall1}) the nilpotent symbols are functions of the following expression
    \[
        (\im(z'), \im(z_n) - |z'|^2).
    \]
    These are at the same time our $\beta$-nilpotent symbols for any basis $\beta$ of $\R^n$. On the other hand, in Section~3 of \cite{SVNilpotentDuduchava2017} the authors considered symbols that are functions of the expression
    \[
        (\im(z_{k+1}), \dots, \im(z_{n-1}), \im(z_n)-|z'|^2),
    \]
    for some $k$ such that $1 \leq k \leq n-1$. It is easy to see from the definitions that these are precisely the $\beta$-nilpotent symbols for the linearly independent subset of $\R^n$ given by the rows of the following $(n-k)\times n$ matrix
    \[
        A(\beta) = (0, I_{n-k}),
    \]
    where we have used the notation from Remark~\ref{rmk:momentmapfunctions_beta_matrices}. Hence, such special type of symbols considered in \cite{SVNilpotentDuduchava2017} is a particular case of our $\beta$-nilpotent symbols.
\end{example}

We now obtain the following result that generalizes the commutativity of Toeplitz operators with nilpotent symbols stated in \cite{QVUnitBall1}, as well as some of the commutativity results from \cite{SVNilpotentDuduchava2017}. It is a consequence of Theorem~\ref{thm:Toeplitz_mu-functions}.

\begin{theorem}\label{thm:commToeplitz-beta-nilpotent}
    Let $\beta = \{ v_1, \dots, v_m\} \subset \R^n$ be a linearly independent set. Then, for every $\lambda > -1$ the $C^*$-algebra $\cT^{(\lambda)}(L^\infty(D_n)_{\beta,N(n)})$ generated by Toeplitz operators with essentially bounded $\beta$-nilpotent symbols acting on $\cA^2_\lambda(D_n)$ is commutative.
\end{theorem}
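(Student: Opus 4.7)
The plan is to reduce the statement directly to Theorem~\ref{thm:Toeplitz_mu-functions}, which already establishes commutativity of Toeplitz $C^*$-algebras with symbols in $L^\infty(D)^{\mu^H}$ for any connected Abelian subgroup $H$ of the biholomorphisms of $D$.

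First, given the linearly independent set $\beta = \{v_1,\dots,v_m\} \subset \R^n$, I would set $\fh = \R\langle\beta\rangle \subseteq \R^n$ and define $H = \exp(\fh)$. Since $\R^n$ is (by the description in Section~\ref{sec:MASG_momentmaps}) the Lie algebra of the nilpotent MASG $N(n)$ of biholomorphisms of $D_n$, Proposition~\ref{prop:Abeliansubgroups} guarantees that $H$ is a well-defined connected Abelian subgroup of the group of biholomorphisms of $D_n$, with Lie algebra $\fh$.

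Next, by Proposition~\ref{prop:momentmap_AbelianfromMASG} together with Proposition~\ref{prop:momentmap_N(n)}, the $H$-action on $D_n$ admits a moment map $\mu^H : D_n \to \fh$ given by $\mu^H = \iota^* \circ \mu^{N(n)}$, where $\iota^* : \R^n \to \fh$ is the orthogonal projection. Corollary~\ref{cor:momentmapfunctions_betaG}, specialized to this situation as in Proposition~\ref{prop:momentmap_betaN(n)}, then identifies $L^\infty(D_n)^{\mu^H}$ with exactly the set of symbols of the form $f(a_1,\dots,a_m)$ used in Definition~\ref{def:beta-nilpotent}. Thus, by the very definition of $\beta$-nilpotent symbols,
\[
    L^\infty(D_n)_{\beta,N(n)} = L^\infty(D_n)^{\mu^H}.
\]

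Finally, applying Theorem~\ref{thm:Toeplitz_mu-functions} to the connected Abelian subgroup $H$ and the domain $D = D_n$ yields the commutativity of the $C^*$-algebra $\cT^{(\lambda)}(L^\infty(D_n)^{\mu^H})$ on $\cA^2_\lambda(D_n)$ for every $\lambda > -1$. Substituting the identification above gives the commutativity of $\cT^{(\lambda)}(L^\infty(D_n)_{\beta,N(n)})$, as desired. There is no real obstacle here; the work has already been carried out in Theorem~\ref{thm:Toeplitz_mu-functions}, and the only task is to verify that the symbol class $L^\infty(D_n)_{\beta,N(n)}$ is, by construction, the moment-map-function class for the subgroup $H = \exp(\R\langle\beta\rangle)$ of $N(n)$.
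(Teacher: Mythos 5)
Your proposal is correct and follows exactly the route the paper takes: the paper states Theorem~\ref{thm:commToeplitz-beta-nilpotent} as an immediate consequence of Theorem~\ref{thm:Toeplitz_mu-functions}, using the identification $L^\infty(D_n)_{\beta,N(n)} = L^\infty(D_n)^{\mu^H}$ with $H = \exp(\R\langle\beta\rangle)$ that is already built into Definition~\ref{def:beta-nilpotent}. Your spelling out of Propositions~\ref{prop:Abeliansubgroups}, \ref{prop:momentmap_AbelianfromMASG} and \ref{prop:momentmap_betaN(n)} just makes explicit what the paper leaves implicit.
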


\subsection{$\beta$-quasi-nilpotent symbols}
Finally, let us consider for $1 \leq k \leq n-1$ the group $G = \T^k\times\R^{n-k}$ acting by the quasi-nilpotent action $N(n,k)$ on $D_n$ defined in Section~\ref{sec:MASG_momentmaps}. As in the previous cases, there are some restrictions we may assume to avoid repetitions with other actions. As before, we have a linearly independent set $\beta = \{ v_1, \dots, v_m \} \subset \R^n$, the Lie algebra $\fh = \R\langle\beta\rangle$ and the connected Abelian subgroup $H = \exp(\fh)$. Following Remark~\ref{rmk:momentmapfunctions_beta_matrices}, there is a corresponding matrix $A(\beta)$. If the first $k$ columns of $A(\beta)$ vanish, then $H$ is a subgroup of the nilpotent action. If the last $n-k$ columns of $A(\beta)$ vanish, then $H$ is conjugate to a subgroup of the quasi-elliptic action. If the columns with indices $k+1$ to $n-1$ vanish, then $H$ is a subgroup of the quasi-parabolic action. Hence, as long as the other actions have been considered, we may assume that $A(\beta)$ has some non-zero column on each of the three submatrices involved in such discussion. By doing so, we consider the most general subgroup $H$ of the quasi-nilpotent case.

The moment maps for connected subgroups in this case are given by the next result. It is a consequence of Propositions~\ref{prop:momentmap_N(n,k)} and \ref{prop:momentmap_AbelianfromMASG-basis} and Corollary~\ref{cor:momentmapfunctions_betaG}.

\begin{proposition}\label{prop:momentmap_betaN(n,k)}
    Let $H$ be a connected Abelian subgroup of $\T^k\times\R^{n-k}$ whose Lie algebra $\fh$ has the basis $\beta = \{ v_1, \dots, v_m \}$. Then, the following hold.
    \begin{enumerate}
        \item If $\beta$ is orthogonal, then the moment map for the $H$-action on $D_n$ is the map $\mu^H : D_n \rightarrow \fh$ given by
            \[
                \mu(z) = -\frac{1}{2(\im(z_n)-|z'|^2)} \sum_{j=1}^m
                    \frac{\langle (2|z_1|^2, \dots, 2|z_k|^2,-4\im(z_{(2)}),1),v_j\rangle}{\langle v_j,v_j\rangle} v_j,
            \]
            for every $z \in D_n$.
        \item If $\beta$ is an arbitrary basis, then the space of essentially bounded moment map functions for $H$ is given by
            \[
                L^\infty(D_n)^{\mu^H} = \{ f(a_1, \dots, a_m) \mid
                    f \in L^\infty((a_1, \dots, a_m)(D_n)) \},
            \]
            where $a_j : D_n \rightarrow \R$ are defined by
            \[
                a_j(z) = \frac{1}{2(\im(z_n)-|z'|^2)}
                    \langle (2|z_1|^2, \dots 2|z_k|^2, -4\im(z_{(2)}), 1), v_j\rangle,
            \]
            for all $z \in D_n$ and for every $j = 1, \dots, m$.
    \end{enumerate}
\end{proposition}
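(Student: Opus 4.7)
The plan is to derive this proposition as a direct application of the three cited results, essentially by substitution. The setup is already in place: $G = \T^k\times\R^{n-k}$ is a MASG from the list in Section~\ref{sec:MASG_momentmaps}, and $H$ is a connected Abelian subgroup of $G$ whose Lie algebra $\fh \subset \R^n$ is spanned by the given basis $\beta = \{v_1,\dots,v_m\}$.

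For part (1), I would begin with the explicit expression of the moment map $\mu^G : D_n \to \R^n$ for the full quasi-nilpotent action $N(n,k)$, namely
\[
    \mu^G(z) = -\frac{1}{2(\im(z_n)-|z'|^2)}(2|z_1|^2,\dots,2|z_k|^2,-4\im(z_{(2)}),1),
\]
as established in Proposition~\ref{prop:momentmap_N(n,k)}. Then I would invoke Proposition~\ref{prop:momentmap_AbelianfromMASG-basis} with this particular $G$: in the orthogonal basis case, the proposition gives
\[
    \mu^H(z) = \sum_{j=1}^m \frac{\langle \mu^G(z), v_j\rangle}{\langle v_j, v_j\rangle}\, v_j,
\]
and substituting the explicit $\mu^G$ above yields the stated formula. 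The linearity of the inner product in the first argument lets me pull the scalar $-\frac{1}{2(\im(z_n)-|z'|^2)}$ out of the sum.

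For part (2), I would apply Corollary~\ref{cor:momentmapfunctions_betaG}, which asserts that for an arbitrary basis $\beta = \{v_1,\dots,v_m\}$ of $\fh$, the space $L^\infty(D_n)^{\mu^H}$ is exactly the set of essentially bounded functions of the form $f(a_1,\dots,a_m)$, where $a_j(z) = \langle \mu^G(z), v_j\rangle$. Substituting the concrete formula for $\mu^G$ from Proposition~\ref{prop:momentmap_N(n,k)} immediately gives the claimed expression for the coordinate functions $a_j$, with the factor $\frac{1}{2(\im(z_n)-|z'|^2)}$ absorbed from $\mu^G$.

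There is no real obstacle here: the proof is a transparent combination of three previously established results, and the only bookkeeping is matching the sign conventions and the scalar factor $\frac{1}{2(\im(z_n)-|z'|^2)}$ that sits in front of $\mu^G$. The identification $\fg = \fg^* = \R^n$ via the canonical inner product, already in force throughout Section~\ref{sec:commToep_momentmaps_Abelian}, ensures that $\iota^*$ is simply the orthogonal projection onto $\fh$, which is what justifies writing the coefficients as $\langle \mu^G(z), v_j\rangle/\langle v_j, v_j\rangle$ in the orthogonal case and as $\langle\mu^G(z),v_j\rangle$ for the coordinate functions $a_j$ in the general case.
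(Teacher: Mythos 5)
Your proposal is correct and follows exactly the paper's own route: the paper states this proposition precisely as a consequence of Propositions~\ref{prop:momentmap_N(n,k)} and \ref{prop:momentmap_AbelianfromMASG-basis} together with Corollary~\ref{cor:momentmapfunctions_betaG}, obtained by substituting the explicit $\mu^G$ for the $N(n,k)$ action into those general results. The bookkeeping you describe (pulling out the scalar factor and using the identification of $\iota^*$ with the orthogonal projection) is all that is needed.
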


Now we obtain the following special symbols.

\begin{definition}\label{def:beta-quasi-nilpotent}
    Let $\beta = \{ v_1, \dots, v_m \} \subset \R^n$ be a linearly independent set. An essentially bounded function $a : D_n \rightarrow \C$ is called a $\beta$-quasi-nilpotent symbol if there is a measurable function $f$ such that
    \[
        a = f(a_1, \dots, a_m),
    \]
    where $a_1, \dots, a_m$ are given as in Proposition~\ref{prop:momentmap_betaN(n,k)}. The space of all essentially bounded $\beta$-nilpotent symbols will be denoted by $L^\infty(D_n)_{\beta,N(n,k)}$. In other words, we have
    \[
        L^\infty(D_n)_{\beta,N(n,k)} = L^\infty(D_n)^{\mu^H},
    \]
    where $H = \exp(\R\langle\beta\rangle)$.
\end{definition}

\begin{example}[$\alpha$-quasi-nilpotent quasi-radial symbols as $\beta$-quasi-nilpotent symbols]
\label{ex:beta-quasi-nilpotent_partitions}
    As it corresponds to this subsection we have $k \in \Z_+$ that defines the action of $\T^k\times\R^{n-k}$. We will consider the most general case of the quasi-nilpotent action as described at the beginning of this subsection. Hence, we will assume that $1 \leq k \leq n-2$. Let us also consider a partition $\alpha \in \Z_+^m$ of $k$. We complete this to a partition $\widehat{\alpha}$ of $n$ by adding $n-k$ entries with $1$ on each new entry as follows
    \[
        \widehat{\alpha} = (\alpha,1, \dots, 1) \in \Z_+^{m+n-k}.
    \]
    Using the notation of Examples~\ref{ex:quasi-radial_partitions} and \ref{ex:beta-quasi-parabolic_partitions} we define the linearly independent set $\beta(\widehat{\alpha}) = \{v_1, \dots, v_{m+n-k}\}$ by choosing
    \[
        v_j = (0, 1_{\widehat{\alpha}_j}, 0)
    \]
    for every $j = 1, \dots, m+n-k$, where the non-zero entries occur exactly at the indices $\widehat{\alpha}_0 + \dots + \widehat{\alpha}_{j-1}+1, \dots, \widehat{\alpha}_0 + \dots + \widehat{\alpha}_j$. In the notation of Remark~\ref{rmk:momentmapfunctions_beta_matrices}, the matrix associated to $\beta(\widehat{\alpha})$ is given by
    \[
        A(\beta(\widehat{\alpha})) =
        \begin{pmatrix}
          1_{\widehat{\alpha}_1} & 0 & \cdots & 0 & 0 \\
          0 & 1_{\widehat{\alpha}_2} & \cdots & 0 & 0  \\
          \vdots & \vdots & \vdots & \vdots & \vdots \\
          0 & 0 & \cdots & 1_{\widehat{\alpha}_m} & 0 \\
          0 & 0 & \cdots & 0 & I_{n-k} \\
        \end{pmatrix}.
    \]
    For the basis $\beta(\widehat{\alpha})$ just defined, the functions $a_j$ from Proposition~\ref{prop:momentmap_betaN(n,k)} are given by
    \[
        a_j(z) =
        \begin{cases}
            \displaystyle
            \frac{|z_{(\widehat{\alpha}_j)}|^2}{\im(z_n)-|z'|^2},
                    & \mbox{if } j=1, \dots, m, \\
            \displaystyle
            -\frac{2\im(z_j)}{\im(z_n)-|z'|^2}
                    & \mbox{if } j=m+1, \dots, m+n-k-1 \\
            \displaystyle
            \frac{1}{2(\im(z_n)-|z'|^2)}, & \mbox{if } j = m+n-k,
        \end{cases}
    \]
    for all $z \in D_n$. Using a simple change of variables, it follows that the $\beta(\widehat{\alpha})$-quasi-nilpotent symbols are exactly the functions of the expression
    \[
        (|z_{(1)}|, \dots, |z_{(m-1)}|, \im(z_{k+1}), \dots, \im(z_{n-1}),
        \im(z_n)-|z''|^2),
    \]
    where $z'' = (z_{k+1}, \dots, z_n)$.
    It follows that, for this particular choice of $\beta(\widehat{\alpha})$, the $\beta(\widehat{\alpha})$-quasi-parabolic symbols from Definition~\ref{def:beta-quasi-parabolic} are precisely the $\alpha$-quasi-nilpotent quasi-radial symbols from \cite{BauerVasilevskiQuasiNilpotent2012}.
\end{example}

By the previous example the next result generalizes the commutativity of Toeplitz operators with quasi-nilpotent symbols stated in \cite{QVUnitBall1} and with $\alpha$-quasi-nilpotent quasi-radial symbols stated in \cite{BauerVasilevskiQuasiNilpotent2012}. This result is a consequence of Theorem~\ref{thm:Toeplitz_mu-functions}.

\begin{theorem}\label{thm:commToeplitz-beta-quasi-nilpotent}
    Let $\beta = \{ v_1, \dots, v_m\} \subset \R^n$ be a linearly independent set. Then, for every $\lambda > -1$ the $C^*$-algebra $\cT^{(\lambda)}(L^\infty(D_n)_{\beta,N(n,k)})$ generated by Toeplitz operators with essentially bounded $\beta$-quasi-nilpotent symbols acting on $\cA^2_\lambda(D_n)$ is commutative.
\end{theorem}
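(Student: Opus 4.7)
The plan is to reduce the statement to a direct application of Theorem~\ref{thm:Toeplitz_mu-functions}. First, given the linearly independent set $\beta = \{v_1,\dots,v_m\} \subset \R^n$, let $\fh = \R\langle\beta\rangle$ and set $H = \exp(\fh)$, viewed as a subgroup of the quasi-nilpotent MASG $G = \T^k \times \R^{n-k}$ acting on $D_n$ via $N(n,k)$. By Proposition~\ref{prop:Abeliansubgroups}, $H$ is a well-defined connected Abelian subgroup of biholomorphisms of $D_n$, and its Lie algebra is $\fh$.

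Next, I would verify that $L^\infty(D_n)_{\beta,N(n,k)}$ is exactly the space of essentially bounded moment map functions for the $H$-action. By Proposition~\ref{prop:momentmap_AbelianfromMASG}, the $H$-action admits the moment map $\mu^H = \iota^* \circ \mu^G$, where $\iota^* : \R^n \to \fh$ is the orthogonal projection and $\mu^G$ is the quasi-nilpotent moment map of Proposition~\ref{prop:momentmap_N(n,k)}. Substituting the explicit formula for $\mu^G$ gives exactly the coordinate expressions for $a_1,\dots,a_m$ appearing in Proposition~\ref{prop:momentmap_betaN(n,k)} (after using Corollary~\ref{cor:momentmapfunctions_betaG} to pass from an orthogonal basis to the arbitrary basis $\beta$, which is harmless because a change of basis acts by an invertible matrix on the tuple $(a_1,\dots,a_m)$ and therefore does not enlarge or shrink the set of symbols expressible as $f(a_1,\dots,a_m)$). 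By Definition~\ref{def:beta-quasi-nilpotent} this yields the identification
\[
    L^\infty(D_n)_{\beta,N(n,k)} = L^\infty(D_n)^{\mu^H}.
\]

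Finally, applying Theorem~\ref{thm:Toeplitz_mu-functions} to the connected Abelian subgroup $H$ and its moment map $\mu^H$ shows that $\cT^{(\lambda)}(L^\infty(D_n)^{\mu^H})$ is commutative for every $\lambda > -1$, which together with the above identification gives commutativity of $\cT^{(\lambda)}(L^\infty(D_n)_{\beta,N(n,k)})$. There is no genuine obstacle here: the substantive content is already bundled into Theorem~\ref{thm:Toeplitz_mu-functions} (itself resting on Proposition~\ref{prop:momentmap_AbelianfromMASG} and the MASG commutativity of \cite{QVUnitBall1}), so the proof of the present theorem is essentially an exercise in unpacking Definitions~\ref{def:momentmapfunction} and \ref{def:beta-quasi-nilpotent} and citing the relevant earlier results.
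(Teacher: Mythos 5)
Your proposal is correct and follows exactly the route the paper takes: the paper's entire proof is the remark that the theorem ``is a consequence of Theorem~\ref{thm:Toeplitz_mu-functions},'' and your argument simply unpacks that reduction, noting that $L^\infty(D_n)_{\beta,N(n,k)} = L^\infty(D_n)^{\mu^H}$ for $H = \exp(\R\langle\beta\rangle)$ (which is essentially built into Definition~\ref{def:beta-quasi-nilpotent}) and then citing Theorem~\ref{thm:Toeplitz_mu-functions}. Nothing is missing; your version is just a more explicit write-up of the same one-line deduction.
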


As in the $\beta$-quasi-elliptic and $\beta$-quasi-parabolic cases, it is possible to prove the existence of a $\beta$ such that the $\beta$-quasi-nilpotent symbols are not given as a set of $\alpha$-quasi-nilpotent quasi-radial symbols from \cite{BauerVasilevskiQuasiNilpotent2012}. Similar arguments can be used to achieve this. In fact, the $\alpha$-quasi-nilpotent quasi-radial symbols from \cite{BauerVasilevskiQuasiNilpotent2012} do not consider any sort of partition in the coordinates from the index $k+1$ to the index $n$. We thus have the following result.

\begin{proposition}\label{prop:betaquasinilpotent-notquasinilpotentquasi-radial}
    Assume that $n \geq 4$. Then, there exists $\beta \subset \R^n$ linearly independent such that the space of $\beta$-quasi-nilpotent symbols are different from the space of $\alpha$-quasi-nilpotent quasi-radial symbols defined by any partition $\alpha$ of $k$. In other words, we have
    \[
        L^\infty(D_n)_{\beta,N(n,k)} \neq L^\infty(D_n)_{\beta(\widehat{\alpha}),N(n,k)}
    \]
    for every partition $\alpha$ of $k$, where $\beta(\widehat{\alpha})$ is defined in Example~\ref{ex:beta-quasi-nilpotent_partitions}.
\end{proposition}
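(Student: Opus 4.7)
The plan is to exhibit one explicit linearly independent set $\beta \subset \R^n$ together with a single symbol, namely $z \mapsto \im(z_{k+1})$, that lies in $L^\infty(D_n)_{\beta(\widehat{\alpha}),N(n,k)}$ for every partition $\alpha$ of $k$ but fails to lie in $L^\infty(D_n)_{\beta,N(n,k)}$. This will immediately yield the required inequality for every $\alpha$.

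First I would take
\[
    \beta = \{e_1+e_{k+1}\} \cup \{e_j : 2 \le j \le k\} \cup \{e_j : k+2 \le j \le n-1\} \cup \{e_n\},
\]
a linearly independent subset of $\R^n$ of size $n-1$, well defined for any $1\le k\le n-2$ (with the second or third index set possibly empty at the extreme values of $k$). Applying Proposition~\ref{prop:momentmap_betaN(n,k)} and pairing each vector of $\beta$ against the tuple $(2|z_1|^2,\dots,2|z_k|^2,-4\im(z_{(2)}),1)$, one reads off directly that the associated functions $a_j$ are, respectively, $(|z_1|^2-2\im(z_{k+1}))/(\im(z_n)-|z'|^2)$, then $|z_j|^2/(\im(z_n)-|z'|^2)$ for $2\le j\le k$, then $-2\im(z_j)/(\im(z_n)-|z'|^2)$ for $k+2\le j\le n-1$, and finally $1/(2(\im(z_n)-|z'|^2))$. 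A direct change of variables then shows that the essentially bounded $\beta$-quasi-nilpotent symbols are exactly the measurable functions of
\[
    \Xi_\beta(z) = \bigl(|z_1|^2 - 2\im(z_{k+1}),\; |z_2|,\dots,|z_k|,\; \im(z_{k+2}),\dots,\im(z_{n-1}),\; \im(z_n) - |z'|^2\bigr).
\]
By Example~\ref{ex:beta-quasi-nilpotent_partitions}, for every partition $\alpha$ of $k$ the $\alpha$-quasi-nilpotent quasi-radial symbols are functions of a tuple whose components include $\im(z_{k+1})$ explicitly, so $\im(z_{k+1})$ belongs to every $L^\infty(D_n)_{\beta(\widehat{\alpha}),N(n,k)}$.

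Next, it suffices to show that $\im(z_{k+1})$ is not a measurable function of $\Xi_\beta$. For this I would exhibit two points $z,w \in D_n$ at which $\Xi_\beta$ agrees but $\im(z_{k+1})$ does not. Fix $r>0$, $a\in\R$, $\epsilon>0$, and $M$ large enough that $M>r^2+a^2$, and set
\[
    z = r\,e_1 + ai\,e_{k+1} + Mi\,e_n,\qquad
    w = r'\,e_1 + a'i\,e_{k+1} + M'i\,e_n,
\]
with $r' = \sqrt{r^2+2\epsilon}$, $a' = a+\epsilon$, and $M' = M + ((r')^2-r^2) + ((a')^2-a^2)$. A direct check gives $(r')^2-2a' = r^2-2a$ and $\im(z_n)-|z'|^2 = \im(w_n)-|w'|^2 = M-r^2-a^2 > 0$, so both points lie in $D_n$ and all remaining components of $\Xi_\beta$ vanish at both. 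Thus $\Xi_\beta(z)=\Xi_\beta(w)$ while $\im(z_{k+1})=a\ne a+\epsilon=\im(w_{k+1})$.

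The main obstacle is conceptual rather than technical: one must choose $\beta$ so that its moment map data mixes the toral coordinate $|z_1|^2$ with the nilpotent coordinate $\im(z_{k+1})$ through the single direction $e_1+e_{k+1}$, thereby destroying the separate dependence on $\im(z_{k+1})$ that is always present in the $\alpha$-quasi-nilpotent quasi-radial family. Once this mixing is built into $\beta$, the separation of points reduces to the elementary identity $(r')^2-2a'=r^2-2a$, which is baked into the definition of $w$. Minor bookkeeping is needed at $k=1$ or $k=n-2$, where some index ranges in $\beta$ become empty, but the argument is otherwise unaffected.
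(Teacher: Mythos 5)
Your argument is essentially correct, and it is worth noting that the paper does not actually supply a proof of this proposition: it only remarks that ``similar arguments'' to Propositions \ref{prop:betaquasielliptic-notkquasi-radial} and \ref{prop:betaquasiparabolic-notparabolickquasi-radial} apply, hinting that one should exploit the fact that the $\alpha$-quasi-nilpotent quasi-radial symbols never partition the coordinates with indices $k+1,\dots,n$. Your construction realizes the same idea but implements the ``mixing'' differently: instead of merging two nilpotent coordinates (say via $e_{k+1}+e_{k+2}$, which is what the paper's hint suggests and which would require $k\le n-3$), you merge the toral coordinate $|z_1|^2$ with the nilpotent coordinate $\im(z_{k+1})$ via $e_1+e_{k+1}$. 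This works for every $1\le k\le n-2$, the computation of the functions $a_j$ from Proposition~\ref{prop:momentmap_betaN(n,k)} is correct, and the two-point separation $(r')^2-2a'=r^2-2a$ with matching value of $\im(z_n)-|z'|^2$ does show that $\im(z_{k+1})$ is not a function of $\Xi_\beta$, while Example~\ref{ex:beta-quasi-nilpotent_partitions} shows it is a function of the $\beta(\widehat{\alpha})$ moment data for every $\alpha$. A single such witness lying in one class and not the other separates the two symbol classes for all $\alpha$ simultaneously, which is the same logical scheme the paper uses in the quasi-elliptic and quasi-parabolic cases.

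One small repair is needed: the spaces being compared consist of \emph{essentially bounded} symbols, and $z\mapsto\im(z_{k+1})$ is unbounded on $D_n$, so it literally belongs to neither space. Replace the witness by $z\mapsto\arctan(\im(z_{k+1}))$; it is bounded, it is still a measurable function of the tuple in Example~\ref{ex:beta-quasi-nilpotent_partitions}, and injectivity of $\arctan$ means your two-point computation still yields a contradiction. (The paper's own proof of Proposition~\ref{prop:betaquasiparabolic-notparabolickquasi-radial} has the identical blemish with the unbounded witness $|z_{(2)}|$ on $D_n$, so you are in good company, but it should be fixed.)
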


\section{Spectral integral formulas for Toeplitz operators with $\beta$-symbols}\label{sec:spectralformulas}
As before, let us consider a MASG $G$ of the group of biholomorphisms of $D$, which is either $\B^n$ or $D_n$. Next we can choose a connected Abelian subgroup $H$ of $G$. As noted in Remark~\ref{rmk:Hmomentfunctions_Gmomentfunctions}, and as a consequence of Propositions~\ref{prop:H-invariance_momentmaps} and \ref{prop:momentmap_AbelianfromMASG}, every moment map function for $H$ is a $G$-invariant symbol. In particular, the spectral integral formulas from \cite{QVUnitBall1} for the Toeplitz operators given for the five types of MASGs can be applied when the symbols are moment map functions for $H$. This will yield corresponding spectral integral formulas for all the $\beta$-symbols introduced in Section~\ref{sec:commToep_momentmaps_Abelian}. Then, by applying a change of variable associated to the moment map we will obtain spectral integral formulas for the case of $G$-invariant symbols that will provide an important simplification of the known expressions. Finally, we also obtain corresponding simplified formulas in the moment map coordinates for the spectral integral formulas for all $\beta$-symbols.

In all subsections below, $\beta = \{v_1, \dots, v_m\} \subset \R^n$ will denote a linearly independent set. This set $\beta$ together with the choice of the MASG $G$ determine the specific type of the $\beta$-symbols considered.

\subsection{Spectra for $\beta$-quasi-elliptic symbols}
By Proposition~\ref{prop:momentmap_betaE(n)} we have that a symbol $a : \B^n \rightarrow \C$ is $\beta$-quasi-elliptic if there is function $f$ such that
\[
    a(z) = f\bigg(
        \frac{\langle r^2, v_1\rangle}{1-|r^2|}, \dots,
        \frac{\langle r^2, v_m\rangle}{1-|r^2|}
    \bigg).
\]
For this expression we follow some special notation. In the first place, we will denote
\begin{align*}
    r &= (r_1, \dots, r_n) = (|z_1|, \dots, |z_n|), \\
    r^2 &= (r_1^2, \dots, r_n^2) = (|z_1|^2, \dots, |z_n|^2).
\end{align*}
On the other hand, and as usual, for every $w \in \C^k$ the expression $|w|$ will denote its Euclidean norm. However, for the particular case of $x \in \R^n_+$ we will use
\[
    |x| = x_1 + \dots + x_n
\]
and this notation will stand for the rest of this work. In particular, we have
\[
    |r^2| = r_1^2 + \dots + r_n^2.
\]
Hence, any $\beta$-quasi-elliptic symbol is a function of $r$ and so it is a quasi-elliptic symbol as defined in \cite{QVUnitBall1}. In particular, we can apply Theorem~10.1 from \cite{QVUnitBall1} to obtain the following result.

\begin{theorem}\label{thm:specToeplitz-beta-quasi-elliptic}
    Suppose that $G = \T^n$ which acts on $\B^n$ by the action $E(n)$ and let $\beta = \{v_1, \dots, v_m\} \subset \R^n$ be a linearly independent set. Then, for every $\lambda > -1$ there exists a unitary map $R : \cA^2_\lambda(\B^n) \rightarrow \ell^2(\N^n)$ such that for every essentially bounded $\beta$-quasi-elliptic symbol $a : \B^n \rightarrow \C$ of the form
    \[
        a(z) = f\bigg(
            \frac{\langle r^2, v_1\rangle}{1-|r^2|}, \dots,
            \frac{\langle r^2, v_m\rangle}{1-|r^2|}
        \bigg)
    \]
    we have $R T_a^{(\lambda)} R^* = \gamma_{a,\lambda} I$, a multiplication operator, where $\gamma_{a,\lambda}$ is given by
    \begin{multline*}
        \gamma_{a,\lambda}(p) = \\
        =\frac{2^n\Gamma(n+|p|+\lambda+1)}{p!\Gamma(\lambda +1)}
        \int_{\tau(\B^n)}
        f\bigg(
            \frac{\langle r^2, v_1\rangle}{1-|r^2|}, \dots,
            \frac{\langle r^2, v_m\rangle}{1-|r^2|}
        \bigg) r^{2p}(1-|r^2|)^\lambda \prod_{j=1}^n r_j \dif r_j,
    \end{multline*}
    for every $p \in \N^n$.
\end{theorem}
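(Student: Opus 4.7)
The plan is to deduce Theorem~\ref{thm:specToeplitz-beta-quasi-elliptic} as a direct specialization of the spectral theorem for $\T^n$-invariant Toeplitz operators proved in \cite{QVUnitBall1}. The first observation is that every $\beta$-quasi-elliptic symbol
\[
    a(z) = f\!\left(\frac{\langle r^2, v_1\rangle}{1-|r^2|},\dots,\frac{\langle r^2, v_m\rangle}{1-|r^2|}\right)
\]
depends only on $r = (|z_1|,\dots,|z_n|)$ and is therefore $\T^n$-invariant. Hence $L^\infty(\B^n)_{\beta,E(n)} \subset L^\infty(\B^n)^{E(n)}$, and the quasi-elliptic spectral machinery from \cite{QVUnitBall1} applies uniformly to all $\beta$-quasi-elliptic symbols, independently of the choice of $\beta$.

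Next I would invoke Theorem~10.1 of \cite{QVUnitBall1}, which supplies a single unitary $R : \cA^2_\lambda(\B^n) \to \ell^2(\N^n)$, independent of the symbol, that simultaneously diagonalizes every Toeplitz operator with a $\T^n$-invariant symbol. Concretely, $R$ is the isometry identifying the orthonormal basis of normalized monomials $\{z^p/\|z^p\|_\lambda\}_{p\in\N^n}$ of $\cA^2_\lambda(\B^n)$ with the standard basis of $\ell^2(\N^n)$. The fact that each $z^p$ is an eigenvector of $T^{(\lambda)}_a$ is a direct consequence of the $\T^n$-equivariance of $T^{(\lambda)}_a$: it commutes with the natural unitary $\T^n$-action on $\cA^2_\lambda(\B^n)$, whose joint spectral decomposition is exactly the monomial decomposition $\cA^2_\lambda(\B^n) = \bigoplus_{p\in\N^n} \C\, z^p$. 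Therefore $R T^{(\lambda)}_a R^* = \gamma_{a,\lambda} I$ with eigenvalues $\gamma_{a,\lambda}(p) = \langle T^{(\lambda)}_a e_p, e_p\rangle_\lambda$.

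The remaining step is the explicit integral formula for $\gamma_{a,\lambda}(p)$. Combining $T^{(\lambda)}_a(f) = B_{\B^n,\lambda}(af)$ with the reproducing property gives
\[
    \gamma_{a,\lambda}(p) = \frac{1}{\|z^p\|_\lambda^2}\int_{\B^n} a(z)\, |z|^{2p}\, \dif v_\lambda(z).
\]
Passing to polar coordinates $z_j = r_j e^{i\theta_j}$, the $\T^n$-invariance of the integrand eliminates each angular variable, contributing a factor $(2\pi)^n$ together with the Jacobian $\prod_{j=1}^n r_j \dif r_j$ on the radial simplex $\tau(\B^n) = \{r \in \R^n_+ : |r^2| < 1\}$. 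Substituting the $\beta$-quasi-elliptic form of $a$ into the resulting radial integral produces exactly the stated expression.

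The only step requiring care, and the main technical point, is matching the overall constant. This reduces to combining $c_\lambda = \Gamma(n+\lambda+1)/(\pi^n\Gamma(\lambda+1))$ from the definition of $\dif v_\lambda$, the factor $(2\pi)^n$ from the $n$ angular integrations, and the standard Beta-integral evaluation $\|z^p\|_\lambda^2 = \pi^n\, p!\,\Gamma(\lambda+1)/\Gamma(n+|p|+\lambda+1)$; these together collapse to the stated normalization $2^n\Gamma(n+|p|+\lambda+1)/(p!\,\Gamma(\lambda+1))$. Neither the unitary $R$ nor this constant depend on $\beta$, which enters the argument solely through the specific parameterization of $a$ --- so the proof is essentially a rewriting of the known quasi-elliptic spectral formula using the $\beta$-indexed coordinates defined via the moment map.
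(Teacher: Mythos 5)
Your proposal is correct and follows the paper's own proof exactly: the paper likewise observes that every $\beta$-quasi-elliptic symbol depends only on $r=(|z_1|,\dots,|z_n|)$, hence is quasi-elliptic in the sense of \cite{QVUnitBall1}, and then simply invokes Theorem~10.1 of that reference; your extra sketch of where the integral formula and the unitary $R$ come from is consistent with this. One small bookkeeping slip in the constant-matching you single out as the main technical point: the value $\|z^p\|_\lambda^2=\pi^n\,p!\,\Gamma(\lambda+1)/\Gamma(n+|p|+\lambda+1)$ is the norm with respect to the \emph{unnormalized} measure $(1-|z|^2)^\lambda\dif v$, so it cannot be combined with a numerator that already carries $c_\lambda$; with the probability measure $v_\lambda$ one has $\|z^p\|_\lambda^2=p!\,\Gamma(n+\lambda+1)/\Gamma(n+|p|+\lambda+1)$, after which the three factors do collapse to $2^n\Gamma(n+|p|+\lambda+1)/(p!\,\Gamma(\lambda+1))$ as you claim.
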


We now write the spectral integral formulas of Theorem~\ref{thm:specToeplitz-beta-quasi-elliptic} in terms of moment map coordinates for the action $E(n)$ in the case of quasi-elliptic symbols.

\begin{theorem}\label{thm:specToeplitz-quasi-elliptic}
    For every $\lambda > -1$ there exists a unitary map $R : \cA^2_\lambda(\B^n) \rightarrow \ell^2(\N^n)$ such that for every essentially bounded quasi-elliptic symbol $a : \B^n \rightarrow \C$ of the form
    \[
        a(z) = f\bigg(
            \frac{r_1^2}{1-|r^2|}, \dots, \frac{r_n^2}{1-|r^2|}
            \bigg)
    \]
    we have $R T_a^{(\lambda)} R^* = \gamma_{a,\lambda} I$, a multiplication operator, where $\gamma_{a,\lambda}$ is given by
    \[
        \gamma_{a,\lambda}(p) =
            \frac{\Gamma(\lambda+|p|+n+1)}{p!\Gamma(\lambda + 1)}
                \int_{\mathbb{R}^{n}_{+}}
                \frac{f(u) u^{p}}{(1+|u|)^{\lambda+|p|+n+1} } \dif u,
    \]
    for every $p \in \N^n$.
\end{theorem}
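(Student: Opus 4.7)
The plan is to derive Theorem~\ref{thm:specToeplitz-quasi-elliptic} from Theorem~\ref{thm:specToeplitz-beta-quasi-elliptic} by specializing $\beta$ to the standard basis of $\R^n$ and then performing the change of variables to the moment map coordinates. First I would take $\beta = \{e_1, \dots, e_n\}$ in Theorem~\ref{thm:specToeplitz-beta-quasi-elliptic}, so that $\langle r^2, v_j\rangle = r_j^2$ for every $j$ and the $\beta$-quasi-elliptic symbols become precisely the quasi-elliptic symbols of the form
\[
a(z) = f\bigg(\frac{r_1^2}{1-|r^2|}, \dots, \frac{r_n^2}{1-|r^2|}\bigg)
\]
that appear in the statement. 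The unitary map $R$ and the diagonalization $R T_a^{(\lambda)} R^* = \gamma_{a,\lambda} I$ are then immediately provided, with
\[
\gamma_{a,\lambda}(p) = \frac{2^n \Gamma(n+|p|+\lambda+1)}{p!\,\Gamma(\lambda+1)} \int_{\tau(\B^n)} f\bigg(\frac{r_1^2}{1-|r^2|},\dots,\frac{r_n^2}{1-|r^2|}\bigg) r^{2p}(1-|r^2|)^\lambda \prod_{j=1}^n r_j \dif r_j,
\]
where $\tau(\B^n) = \{r \in \R^n_+ : |r^2| < 1\}$ is the image of $\B^n$ under the coordinatewise absolute value.

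The core step is the change of variables $u_j = r_j^2/(1-|r^2|)$, which up to a sign coincides with the components of the moment map from Proposition~\ref{prop:momentmap_E(n)}. I would first check that this is a diffeomorphism $\tau(\B^n) \to \R^n_+$: setting $T = 1+|u|$, the inverse is given by $r_j = \sqrt{u_j/T}$, so $|r^2| = |u|/T < 1$ automatically, and the map is bijective. Inverting the substitution yields the clean identities $r_j^2 = u_j/(1+|u|)$ and $1-|r^2| = 1/(1+|u|)$, which make the transformation of every factor in the integrand transparent.

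The only mildly technical piece is the Jacobian. Writing $\prod_{j=1}^n r_j\,\dif r_j = 2^{-n}\,\dif(r_1^2)\cdots \dif(r_n^2)$, one is reduced to computing $\det(\partial r_j^2/\partial u_k)$. Since $r_j^2 = u_j/T$, this matrix equals $(1/T)I_n - (1/T^2)\,u\,\mathbf{1}^\top$, and the matrix determinant lemma gives
\[
\det\Big((1/T)I_n - (1/T^2)u\mathbf{1}^\top\Big) = \frac{1}{T^n}\Big(1 - \frac{|u|}{T}\Big) = \frac{1}{(1+|u|)^{n+1}}.
\]
Substituting $r^{2p} = u^p/(1+|u|)^{|p|}$, $(1-|r^2|)^\lambda = (1+|u|)^{-\lambda}$, and the Jacobian into the previous display, the factor $2^n$ cancels and the integrand consolidates into $f(u)\,u^p\,(1+|u|)^{-(\lambda+|p|+n+1)}$, yielding exactly the claimed formula.

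The proof is therefore essentially a bookkeeping exercise once the right substitution has been identified; the conceptual content lies in recognizing that the moment map coordinates $u_j = r_j^2/(1-|r^2|)$ are the natural ones that flatten $\tau(\B^n)$ onto $\R^n_+$ and absorb the Bergman-type weight $(1-|r^2|)^\lambda$ into a power of $1+|u|$. I do not anticipate any real obstacle beyond ensuring the Jacobian and the bijectivity of the transformation are carried out without slip.
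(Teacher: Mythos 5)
Your proposal is correct and follows essentially the same route as the paper: both specialize Theorem~\ref{thm:specToeplitz-beta-quasi-elliptic} to the canonical basis $\beta = \{e_1,\dots,e_n\}$ and then apply the change of variables $u_j = r_j^2/(1-|r^2|)$, with inverse $r_j = (u_j/(1+|u|))^{1/2}$, to land on the stated integral. The only difference is that you verify the Jacobian identity $\prod_{j=1}^n 2r_j\,\dif r_j = \dif u/(1+|u|)^{n+1}$ explicitly via the matrix determinant lemma, whereas the paper simply records it; your computation is correct.
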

\begin{proof}
    We use a change of coordinates corresponding to the moment map of $E(n)$. More precisely, we introduce the map $\varphi : \tau(\B^n) \rightarrow \R^n_+$ defined by
    \[
        u = \varphi(r)
            = \bigg( \frac{r_1^2}{1-|r^2|},\dots,
            \frac{r_n^2}{1-|r^2|} \bigg),
    \]
    which has an inverse given by
    \[
        \varphi^{-1}(u) =
            \Bigg(  \bigg(\frac{u_1}{1+|u|}  \bigg)^{1/2}, \dots,
            \bigg(\frac{u_n}{1+|u|} \bigg)^{1/2} \Bigg).
    \]
    For this change of coordinates we have
    \[
        1-|r^2| = \frac{1}{1+|u|}, \quad
        \prod_{j=1}^n 2 r_j \dif r_j  = \frac{\dif u}{(1+|u|)^{n+1}}.
    \]
    Hence, the integral formula is obtained from Theorem~\ref{thm:specToeplitz-beta-quasi-elliptic} by taking $\beta = \{e_1, \dots, e_n\}$, the canonical basis of $\R^n$, and applying the change of coordinates $\varphi$.
\end{proof}

The next result yields a corresponding spectral integral formulas for $\beta$-quasi-elliptic symbols. We recall from Remark~\ref{rmk:momentmapfunctions_beta_matrices} that $A(\beta)$ is the matrix whose rows are the elements $v_1, \dots, v_m$ of $\beta$. We also recall from that same remark our abuse of notation where for a function $f$ of several variables we use interchangeably a column or a row for the arguments of $f$.

\begin{corollary}\label{cor:specToeplitz-beta-quasi-elliptic}
    Suppose that $G = \T^n$ which acts on $\B^n$ by the action $E(n)$ and let $\beta = \{v_1, \dots, v_m\} \subset \R^n$ be a linearly independent set. Then, for every $\lambda > -1$ there exists a unitary map $R : \cA^2_\lambda(\B^n) \rightarrow \ell^2(\N^n)$ such that for every essentially bounded $\beta$-quasi-elliptic symbol $a : \B^n \rightarrow \C$ of the form
    \[
        a(z) = f\bigg(
            \frac{\langle r^2, v_1\rangle}{1-|r|^2}, \dots,
            \frac{\langle r^2, v_m\rangle}{1-|r|^2}
        \bigg)
    \]
    we have $R T_a^{(\lambda)} R^* = \gamma_{a,\lambda} I$, a multiplication operator, where $\gamma_{a,\lambda}$ is given by
     \[
        \gamma_{a,\lambda}(p) =
            \frac{\Gamma(\lambda+|p|+n+1)}{p!\Gamma(\lambda +1)}
                \int_{\R^n_+}
            \frac{f(A(\beta) u^\top) u^{p}}{(1+|u|)^{\lambda+|p|+n+1} } \dif u,
    \]
    for every $p \in \N^n$.
\end{corollary}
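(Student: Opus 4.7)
The plan is to derive this corollary as a direct specialization of Theorem~\ref{thm:specToeplitz-quasi-elliptic}. The key observation is that any $\beta$-quasi-elliptic symbol is in particular a quasi-elliptic symbol (in the sense of the previous theorem), only with a specific functional dependence dictated by the matrix $A(\beta)$.

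More precisely, I would first rewrite the defining expression of a $\beta$-quasi-elliptic symbol in terms of the quasi-elliptic coordinates
\[
    u_i = \frac{r_i^2}{1-|r^2|}, \qquad i = 1, \dots, n.
\]
Since the $j$-th argument appearing in the definition of a $\beta$-quasi-elliptic symbol is
\[
    \frac{\langle r^2, v_j\rangle}{1-|r^2|}
        = \sum_{i=1}^n \frac{r_i^2}{1-|r^2|}(v_j)_i
        = \langle u, v_j\rangle
        = \bigl(A(\beta)\, u^\top\bigr)_j,
\]
the symbol can be written as $a(z) = \widetilde{f}(u)$, where $\widetilde{f}(u) = f(A(\beta)\, u^\top)$. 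This presents $a$ as a bona fide quasi-elliptic symbol in the variables $u_1, \dots, u_n$, in exactly the form required to apply Theorem~\ref{thm:specToeplitz-quasi-elliptic}.

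Next, I would invoke Theorem~\ref{thm:specToeplitz-quasi-elliptic} with $\widetilde{f}$ in place of $f$, which provides the unitary $R : \cA^2_\lambda(\B^n) \rightarrow \ell^2(\N^n)$ (this $R$ depends only on $\lambda$ and the MASG $E(n)$, not on $\beta$) and the spectral formula
\[
    \gamma_{a,\lambda}(p)
    = \frac{\Gamma(\lambda+|p|+n+1)}{p!\,\Gamma(\lambda+1)}
        \int_{\R^n_+}
        \frac{\widetilde{f}(u)\, u^p}{(1+|u|)^{\lambda+|p|+n+1}}\, \dif u.
\]
Substituting $\widetilde{f}(u) = f(A(\beta)\, u^\top)$ yields exactly the claimed expression.

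There is essentially no obstacle here beyond the bookkeeping of the change of variables; the work has already been done in Theorem~\ref{thm:specToeplitz-quasi-elliptic}, whose proof in turn relied on the moment-map change of variables $r \mapsto u$. The only mild care needed is to observe that $\widetilde{f}$ inherits essential boundedness from $f$ on $(a_1, \dots, a_m)(\B^n) = A(\beta)\,\tau(\B^n)^\top/(1-|\cdot|^2)$, since $A(\beta)$ is a fixed linear map, so the integral formula is well-defined.
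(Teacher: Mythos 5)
Your proposal is correct and follows essentially the same route as the paper: rewrite the $\beta$-quasi-elliptic symbol as a quasi-elliptic symbol with composite function $\widetilde{f} = f\circ A(\beta)$ and then apply the spectral formula in moment-map coordinates from Theorem~\ref{thm:specToeplitz-quasi-elliptic}. Your added remarks on the independence of $R$ from $\beta$ and on the essential boundedness of $\widetilde{f}$ are harmless elaborations of the same argument.
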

\begin{proof}
    We observe that the symbol $a$ in the statement can be rewritten as
    \[
        a(z) = f\Bigg(
            A(\beta)
            \bigg(
            \frac{r_1^2}{1-|r^2|}, \dots, \frac{r_n^2}{1-|r^2|}
            \bigg)^\top
            \Bigg),
    \]
    Hence, the result is a consequence of Theorem~\ref{thm:specToeplitz-beta-quasi-elliptic}.
\end{proof}

\subsection{Spectra for $\beta$-quasi-parabolic symbols}
We now apply Proposition~\ref{prop:momentmap_betaP(n)} to see that a symbol $a : D_n \rightarrow \C$ is $\beta$-quasi-parabolic if there is a function $f$ such that
\[
    a(z) = f\bigg(
    \frac{\langle(2r'^2,1),v_1\rangle}{2(\im(z_n)-|r'^2|)}, \dots,
    \frac{\langle(2r'^2,1),v_m\rangle}{2(\im(z_n)-|r'^2|)}
    \bigg)
\]
where we denote
\begin{align*}
    r' &= (r_1, \dots, r_{n-1}) = (|z_1|, \dots, |z_{n-1}|), \\
    r'^2 &= (r_1^2, \dots, r_{n-1}^2) = (|z_1|^2, \dots, |z_{n-1}|^2),\\
\end{align*}
from which we have
\[
    |r'^2| = r_1^2 + \dots + r_{n-1}^2.
\]
In particular, every $\beta$-quasi-parabolic symbol is a function of $(r',\im(z_n))$ and so it is quasi-parabolic in the sense of \cite{QVUnitBall1}. An application of Theorem~10.2 from \cite{QVUnitBall1} yields the following result.

\begin{theorem}\label{thm:specToeplitz-beta-quasi-parabolic}
    Suppose that $G = \T^{n-1}\times\R$ which acts on $D_n$ by the action $P(n)$ and let $\beta = \{v_1, \dots, v_m\} \subset \R^n$ be a linearly independent set. Then, for every $\lambda > -1$ there exists a unitary map $R : \cA^2_\lambda(D_n) \rightarrow \ell^2(\N^{n-1},L^2(\R_+))$ such that for every essentially bounded $\beta$-quasi-elliptic symbol $a : D_n \rightarrow \C$ of the form
    \[
        a(z) = f\bigg(
        \frac{\langle(2r'^2,1),v_1\rangle}{2(\im(z_n)-|r'|^2)}, \dots,
        \frac{\langle(2r'^2,1),v_m\rangle}{2(\im(z_n)-|r'|^2)}
        \bigg)
    \]
    we have $RT_a^{(\lambda)} R^* = \gamma_{a,\lambda} I$, a multiplication operator, where $\gamma_{a,\lambda}$ is given by
    \[
        \gamma_{a,\lambda}(p,\xi)=
            \frac{(2\xi)^{\lambda+|p|+n}}{p!\Gamma(\lambda+1)}
            \int_{\R^n_+}
            f\bigg(
            \frac{\langle(2r',1),v_1\rangle}{2r_n}, \dots,
            \frac{\langle(2r',1),v_m\rangle}{2r_n}
            \bigg) r'^p r_n^\lambda e^{-2\xi|r|} \dif r,
    \]
    for every $p \in \N^{n-1}$ and $\xi \in \R_+$.
\end{theorem}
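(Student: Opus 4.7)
The plan is to follow the same strategy used in Theorem~\ref{thm:specToeplitz-beta-quasi-elliptic}, leveraging the fact that $\beta$-quasi-parabolic symbols form a subspace of the quasi-parabolic symbols in the sense of \cite{QVUnitBall1}. The first step is to observe that any $\beta$-quasi-parabolic symbol is a $G$-invariant symbol for $G = \T^{n-1}\times\R$ acting via $P(n)$: this is immediate from Proposition~\ref{prop:H-invariance_momentmaps} applied to the MASG $G$, together with Remark~\ref{rmk:Hmomentfunctions_Gmomentfunctions}, since any $\mu^H$-function is in particular a $\mu^G$-function and hence a $G$-invariant function, i.e.~a quasi-parabolic symbol. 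Concretely, such a symbol depends only on $(|z_1|,\dots,|z_{n-1}|,\im(z_n)-|z'|^2)$, which is the moment map data for the $G$-action.

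Once this reduction is in place, the unitary map $R : \cA^2_\lambda(D_n) \to \ell^2(\N^{n-1},L^2(\R_+))$ will be taken to be exactly the one supplied by Theorem~10.2 of \cite{QVUnitBall1}, whose construction depends only on $G$ and $\lambda$. That theorem gives a spectral integral formula for $T^{(\lambda)}_a$ for any quasi-parabolic symbol $a$, expressed in coordinates $(r_1,\dots,r_{n-1},y)$ where $y$ represents $\im(z_n)-|z'|^2$. I would then substitute the specific functional form of a $\beta$-quasi-parabolic symbol, namely
\[
    a(z) = f\!\left(
        \frac{\langle (2r'^2,1),v_1\rangle}{2(\im(z_n)-|r'^2|)}, \dots,
        \frac{\langle (2r'^2,1),v_m\rangle}{2(\im(z_n)-|r'^2|)}
    \right),
\]
into the integral from \cite{QVUnitBall1}.

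The final step is a change of variables that converts the spectral integral from the coordinates used in \cite{QVUnitBall1} into the coordinates $(r',r_n)$ appearing in the statement. Specifically, I would introduce $r_n = \im(z_n)-|r'^2|$ as the new last variable, together with the substitutions $r_j \mapsto r_j$ (or an appropriate squaring, depending on the exact normalization of Theorem~10.2 in \cite{QVUnitBall1}) so that the quotient $\frac{\langle(2r'^2,1),v_j\rangle}{2(\im(z_n)-|r'^2|)}$ becomes $\frac{\langle(2r',1),v_j\rangle}{2r_n}$ and so that the weight $(1-\text{something})^\lambda$ or $y^\lambda$ takes the form $r_n^\lambda e^{-2\xi|r|}$ with the prefactor $(2\xi)^{\lambda+|p|+n}/(p!\,\Gamma(\lambda+1))$. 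The Jacobian of this change should absorb cleanly, since the moment map coordinates are designed precisely to linearize the dependence on the invariants.

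The main obstacle is bookkeeping rather than mathematical depth: one must match the exact integration domain, the product $r'^p r_n^\lambda$, and the exponential factor $e^{-2\xi|r|}$ against the integrand produced by Theorem~10.2 of \cite{QVUnitBall1}. Verifying that the change of variables produces the Gamma-function normalization $(2\xi)^{\lambda+|p|+n}/(p!\,\Gamma(\lambda+1))$ and not some other constant is the only delicate part; it amounts to checking that the chosen moment map normalization is consistent with the one implicit in the Bergman space identification $U_\lambda$ from Section~\ref{sec:unitball_analysis}. Once this is done, the theorem follows by specializing the resulting formula to the $\beta$-quasi-parabolic case via the matrix $A(\beta)$, as in the proof of Corollary~\ref{cor:specToeplitz-beta-quasi-elliptic}.
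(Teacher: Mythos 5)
Your proposal is correct and follows essentially the same route as the paper: the paper simply notes that every $\beta$-quasi-parabolic symbol is a function of $(r',\im(z_n))$, hence a quasi-parabolic symbol in the sense of \cite{QVUnitBall1}, and then invokes Theorem~10.2 of that reference directly, with the unitary $R$ depending only on $G$ and $\lambda$. Your extra care about the change of variables and the normalization constant is reasonable bookkeeping but is not spelled out in the paper, which treats the statement as an immediate specialization of the cited spectral formula.
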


We now obtain spectral integral formulas corresponding to Theorem~\ref{thm:specToeplitz-beta-quasi-parabolic} in terms of the moment map coordinates for the action $P(n)$ for the quasi-parabolic symbols.

\begin{theorem}\label{thm:specToeplitz-quasi-parabolic}
    For every weight $\lambda > -1$, there exists a unitary transformation $R : \cA^2_\lambda(D_n) \rightarrow \ell^2(\N^{n-1},L^2(\R_+))$ such that for every essentially bounded quasi-parabolic symbol $a : D_n \rightarrow \C$ of the form
    \[
        a(z) = f\bigg(
                \frac{r_1^2}{\im(z_n)-|r'^2|}, \dots,
                \frac{r_{n-1}^2}{\im(z_n)-|r'^2|},
                \frac{1}{2(\im(z_n)-|r'|^2)}
            \bigg)
    \]
    we have $RT_a^{(\lambda)} R^* = \gamma_{a,\lambda} I$, a multiplication operator, where $\gamma_{a,\lambda}$ is given by
    \[
        \gamma_{a,\lambda}(p,\xi)=
            \frac{\xi^{\lambda+|p|+n}}{p!\Gamma(\lambda+1)}
            \int_{\R^n_+}
            \frac{f(u) u'^p e^{-\frac{\xi}{u_{n}}(|u'|+1)} \dif u}{ u_n^{\lambda+|p|+n+1}},
    \]
    for every $p \in \N^{n-1}$ and $\xi \in \R_+$.
\end{theorem}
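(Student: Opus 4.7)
The plan is to deduce the formula from Theorem~\ref{thm:specToeplitz-beta-quasi-parabolic} applied to the canonical basis $\beta = \{e_1, \dots, e_n\}$ of $\R^n$, followed by a change of variables on the integration domain $\R^n_+$ that passes to the moment map coordinates of the action $P(n)$. This parallels the derivation of Theorem~\ref{thm:specToeplitz-quasi-elliptic} from Theorem~\ref{thm:specToeplitz-beta-quasi-elliptic}.

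First I would verify that, for $\beta = \{e_1, \dots, e_n\}$, the symbol form in Theorem~\ref{thm:specToeplitz-beta-quasi-parabolic} reduces exactly to the expression appearing in the present statement, and that the corresponding integrand simplifies to $f(r_1/r_n, \dots, r_{n-1}/r_n, 1/(2r_n)) \, r'^p r_n^\lambda e^{-2\xi|r|}$ multiplied by the prefactor $(2\xi)^{\lambda+|p|+n}/(p!\Gamma(\lambda+1))$. Both facts are immediate from the identities $\langle (2r'^2, 1), e_j \rangle = 2r_j^2$ and $\langle (2r', 1), e_j\rangle = 2r_j$ for $j < n$, and the corresponding values $=1$ for $j = n$.

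Next I would introduce the change of variables $\varphi : \R^n_+ \to \R^n_+$ defined by
\[
u = \varphi(r) = \Big(\frac{r_1}{r_n}, \dots, \frac{r_{n-1}}{r_n}, \frac{1}{2r_n}\Big),
\]
with inverse $r_j = u_j/(2u_n)$ for $j < n$ and $r_n = 1/(2u_n)$. The components of $u$ are precisely the moment map coordinates of $P(n)$ once $r$ is identified with the tuple $(|z_1|^2, \dots, |z_{n-1}|^2, \im(z_n) - |z'|^2)$. The Jacobian matrix of $\varphi^{-1}$ is upper triangular with diagonal entries $1/(2u_n), \dots, 1/(2u_n), -1/(2u_n^2)$, producing the measure identity $\dif r = 2^{-n} u_n^{-(n+1)} \dif u$.

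To conclude, I would substitute in the auxiliary identities $r'^p = (2u_n)^{-|p|} u'^p$, $r_n^\lambda = (2u_n)^{-\lambda}$ and $|r| = (|u'|+1)/(2u_n)$, which in particular turn the exponential into $e^{-\xi(|u'|+1)/u_n}$. The resulting powers of $2$ accumulated inside the integrand amount to $2^{-|p|-\lambda-n}$, which cancels exactly against the $2^{\lambda+|p|+n}$ extracted from the prefactor $(2\xi)^{\lambda+|p|+n}$, producing the desired coefficient $\xi^{\lambda+|p|+n}$. The main obstacle is simply the careful bookkeeping of the various powers of $2$ and of $u_n$; no conceptual new ingredient beyond the moment map change of variables is needed.
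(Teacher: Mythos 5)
Your proposal is correct and follows exactly the paper's own route: specialize Theorem~\ref{thm:specToeplitz-beta-quasi-parabolic} to the canonical basis $\beta = \{e_1,\dots,e_n\}$ and apply the moment-map change of variables $u = (r'/r_n, 1/(2r_n))$ with $\dif r = \dif u/(2^n u_n^{n+1})$. Your bookkeeping of the powers of $2$ and $u_n$ (which the paper leaves implicit) checks out and yields the stated coefficient $\xi^{\lambda+|p|+n}$ and denominator $u_n^{\lambda+|p|+n+1}$.
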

\begin{proof}
    Let us consider a change of coordinates related to the moment map of $P(n)$. In other words, we consider the function $\varphi : \R^n_+ \rightarrow \R^n_+$ given by
    \[
        u = \varphi(r) =
        \bigg(
            \frac{r'}{r_n}, \frac{1}{2r_n}
        \bigg),
    \]
    whose inverse is given by
    \[
        \varphi^{-1}(u) = \frac{1}{2u_n} (u',1).
    \]
    For this change of coordinates we have
    \[
        \dif r = \frac{\dif u}{2^n u_n^{n+1}}.
    \]
    Hence, the integral formula is obtained from Theorem~\ref{thm:specToeplitz-beta-quasi-parabolic} by taking the canonical basis $\beta = \{ e_1, \dots, e_n\}$ of $\R^n$ and applying the change of coordinates $\varphi$.
\end{proof}

The next result now provides the corresponding spectral integral formulas for $\beta$-quasi-parabolic symbols. As before, we follow the conventions and notation from Remark~\ref{rmk:momentmapfunctions_beta_matrices}.

\begin{corollary}\label{cor:specToeplitz-beta-quasi-parabolic}
    Suppose that $G = \T^{n-1}\times\R$ which acts on $D_n$ by the action $P(n)$ and let $\beta = \{v_1, \dots, v_m\} \subset \R^n$ be a linearly independent set. Then, for every $\lambda > -1$ there exists a unitary map $R : \cA^2_\lambda(D_n) \rightarrow \ell^2(\N^{n-1},L^2(\R_+))$ such that for every essentially bounded $\beta$-quasi-parabolic symbol $a : D_n \rightarrow \C$ of the form
    \[
        a(z) = f\bigg(
            \frac{\langle (2r'^2,1), v_1\rangle}{2(\im(z_n)-|r'|^2)}, \dots,
            \frac{\langle (2r'^2,1), v_m\rangle}{2(\im(z_n)-|r'|^2)}
        \bigg)
     \]
    we have $RT_a^{(\lambda)} R^* = \gamma_{a,\lambda} I$, a multiplication operator, where $\gamma_{a,\lambda}$ is given by
    \[
        \gamma_{a,\lambda}(p,\xi)=
            \frac{\xi^{\lambda+|p|+n}}{p!\Gamma(\lambda+1)}
            \int_{\R^n_+}
            \frac{f(A(\beta)u^\top) u'^p e^{-\frac{\xi}{u_{n}}(|u'|+1)} \dif u}{ u_n^{\lambda+|p|+n+1}},
    \]
    for every $p \in \N^{n-1}$ and $\xi \in \R_+$.
\end{corollary}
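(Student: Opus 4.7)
The plan is to reduce Corollary~\ref{cor:specToeplitz-beta-quasi-parabolic} to Theorem~\ref{thm:specToeplitz-quasi-parabolic} in exactly the same way that Corollary~\ref{cor:specToeplitz-beta-quasi-elliptic} was reduced to Theorem~\ref{thm:specToeplitz-quasi-elliptic}. The key observation is that the scalar arguments of $f$ in the $\beta$-quasi-parabolic symbol are obtained from the vector of basic moment map coordinates for $P(n)$ by a single matrix multiplication.

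First I would introduce, for $z \in D_n$, the vector
\[
    U(z) = \bigg(
        \frac{r_1^2}{\im(z_n) - |r'|^2}, \dots,
        \frac{r_{n-1}^2}{\im(z_n) - |r'|^2},
        \frac{1}{2(\im(z_n) - |r'|^2)}
    \bigg) \in \R^n_+,
\]
which are precisely the coordinates appearing in Theorem~\ref{thm:specToeplitz-quasi-parabolic}. A direct computation shows that for each $j = 1, \dots, m$ we have
\[
    \frac{\langle (2r'^2,1), v_j\rangle}{2(\im(z_n)-|r'|^2)}
        = \sum_{k=1}^{n-1} v_{j,k}\, \frac{r_k^2}{\im(z_n)-|r'|^2}
            + v_{j,n}\, \frac{1}{2(\im(z_n)-|r'|^2)}
        = (A(\beta)\, U(z)^\top)_j,
\]
so that the $\beta$-quasi-parabolic symbol may be rewritten as
\[
    a(z) = f(A(\beta)\, U(z)^\top) = \widetilde{f}(U(z)),
\]
where $\widetilde{f}(u) = f(A(\beta)\, u^\top)$. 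Note that $\widetilde{f}$ is essentially bounded on $U(D_n) \subset \R^n_+$ because $f$ is essentially bounded on its domain and the linear map $u \mapsto A(\beta)\, u^\top$ is continuous.

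Next I would apply Theorem~\ref{thm:specToeplitz-quasi-parabolic} to the symbol $a$ written in the form $a(z) = \widetilde{f}(U(z))$. This produces a unitary $R : \cA^2_\lambda(D_n) \rightarrow \ell^2(\N^{n-1},L^2(\R_+))$, independent of the symbol, such that $R T_a^{(\lambda)} R^* = \gamma_{a,\lambda} I$ where
\[
    \gamma_{a,\lambda}(p,\xi) =
        \frac{\xi^{\lambda+|p|+n}}{p!\,\Gamma(\lambda+1)}
        \int_{\R^n_+}
        \frac{\widetilde{f}(u)\, u'^p\, e^{-\frac{\xi}{u_n}(|u'|+1)}\, \dif u}{u_n^{\lambda+|p|+n+1}}.
\]
Substituting $\widetilde{f}(u) = f(A(\beta)\, u^\top)$ yields the claimed formula.

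There is no real obstacle here; the only point that needs a small verification is the bookkeeping identity expressing the $\beta$-arguments as $A(\beta)\, U(z)^\top$, and this is immediate from the definition of $A(\beta)$ as the matrix whose rows are $v_1, \dots, v_m$ (see Remark~\ref{rmk:momentmapfunctions_beta_matrices}). The argument does not require redoing the change of variables $\varphi$ used in Theorem~\ref{thm:specToeplitz-quasi-parabolic}, since that change of variables has already been absorbed into the statement of the theorem being invoked.
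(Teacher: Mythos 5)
Your proposal is correct and follows essentially the same route as the paper: rewrite the $\beta$-quasi-parabolic symbol as $f(A(\beta)\,U(z)^\top)$ with $U$ the vector of moment map coordinates for $P(n)$, and then invoke the already-established spectral formula with $f$ replaced by $f\circ A(\beta)$. (If anything, your explicit appeal to Theorem~\ref{thm:specToeplitz-quasi-parabolic} is the more accurate citation, since the paper's one-line proof nominally points to Theorem~\ref{thm:specToeplitz-beta-quasi-parabolic} even though the corollary's formula is already in the moment map coordinates $u$.)
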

\begin{proof}
    The $\beta$-quasi-parabolic symbols considered can be rewritten as
    \[
        a(z) =
        f\bigg(
            A(\beta)
                \frac{(2r'^2,1)^\top}{2(\im(z_n)-|r'^2|)}
        \bigg),
    \]
    and so we can apply Theorem~\ref{thm:specToeplitz-beta-quasi-parabolic} to obtain the desired result.
\end{proof}

\subsection{Spectra for $\beta$-quasi-hyperbolic symbols.}
\label{subsec:beta-quasi-hyperbolic-spectra}
As a consequence of Proposition~\ref{prop:momentmap_betaH(n)} a symbol $a : D_n \rightarrow \C$ is $\beta$-quasi-hyperbolic if there is a function $f$ such that
\[
    a(z) =
    f\bigg(
        \frac{\langle(2r'^2,\re(z_n)),v_1\rangle}{2(\im(z_n)-|z'|^2)}, \dots,
        \frac{\langle(2r'^2,\re(z_n)),v_m\rangle}{2(\im(z_n)-|z'|^2)}
    \bigg),
\]
where we have used the notation from the $\beta$-quasi-parabolic case. In particular, every $\beta$-quasi-hyperbolic symbol is a function of the expression
\[
    \bigg(
    \frac{|z_1|^2}{\im(z_n)-|z'|^2}, \dots, \frac{|z_{n-1}|^2}{\im(z_n)-|z'|^2},
    \frac{\re(z_n)}{\im(z_n)-|z'|^2}
    \bigg).
\]
On the other hand, a quasi-hyperbolic symbol as defined in \cite{QVUnitBall1} is a function of the expression
\begin{multline*}
    (f_1(z), \dots, f_{n-1}(z), f_n(z)) = \\
        \bigg(
            \frac{|z_1|}{\sqrt{|z'|^2 + |z_n-i|z'|^2|}}, \dots,
            \frac{|z_{n-1}|}{\sqrt{|z'|^2 + |z_n-i|z'|^2|}},
            \arg(z_n-i|z'|^2)
        \bigg).
\end{multline*}
After some simple coordinate manipulations we obtain the following identities
\begin{align*}
    \frac{\re(z_n)}{\im(z_n) - |z'|^2} &=
        \cot(f_n(z)) \\
    \frac{|z_j|^2}{\im(z_n) - |z'|^2} &=
        \frac{f_j(z)^2 \csc(f_n(z))}{1-\sum_{k=1}^{n-1} f_k(z)^2}
\end{align*}
for all $j=1,\dots,n-1$. These prove that every $\beta$-quasi-hyperbolic symbol is quasi-hyperbolic in the sense of \cite{QVUnitBall1}. Hence, we can apply Theorem~10.5 from \cite{QVUnitBall1} to obtain the representation of a Toeplitz operator as a multiplication operator when the symbol is $\beta$-quasi-hyperbolic. For this particular case we omit the explicit formula which is already quite complicated even in the quasi-hyperbolic case.

\subsection{Spectra for $\beta$-nilpotent symbols.} By Proposition~\ref{prop:momentmap_betaN(n)} any given symbol $a : D_n \rightarrow \C$ is $\beta$-nilpotent if there is a function $f$ such that
\[
    a(z) = f\bigg(
        \frac{\langle(-4\im(z'),1),v_1\rangle}{2(\im(z_n)-|z'|^2)}, \dots,
        \frac{\langle(-4\im(z'),1),v_m\rangle}{2(\im(z_n)-|z'|^2)}
    \bigg).
\]
In particular, every $\beta$-nilpotent symbol is a function of $(\im(z'), \im(z_n)-|z'|^2)$ and so it is a nilpotent symbol in the sense of \cite{QVUnitBall1}. Hence, we can apply Theorem~10.3 from \cite{QVUnitBall1} to obtain the next result. We have also made some simple changes of variable to achieve our integral expression.

\begin{theorem}\label{thm:specToeplitz-beta-nilpotent}
    Suppose that $G = \R^n$ which acts on $D_n$ by the action $N(n)$ and let $\beta = \{ v_1, \dots, v_m\} \subset \R^n$ be a linearly independent set. Then, for every $\lambda > -1$ there exists a unitary map $R : \cA^2_\lambda(D_n) \rightarrow L^2(\R^{n-1}\times\R_+)$ such that for every essentially bounded $\beta$-nilpotent symbol $a : D_n \rightarrow \C$ of the form
    \[
        a(z) = f\bigg(
            \frac{\langle(-4\im(z'),1),v_1\rangle}{2(\im(z_n)-|z'|^2)}, \dots,
            \frac{\langle(-4\im(z'),1),v_m\rangle}{2(\im(z_n)-|z'|^2)}
        \bigg)
    \]
    we have $R T^{(\lambda)}_a R^* = \gamma_{a,\lambda} I$, a multiplication operator, where $\gamma_{a,\lambda}$ is given by
    \begin{multline*}
        \gamma_{a,\lambda}(y',\xi) =
            \frac{2^{\lambda+1}
            \xi^{\lambda+\frac{n+1}{2}}}{\pi^{\frac{n-1}{2}}\Gamma(\lambda+1)}
                \int_{\R^{n-1}\times\R_+}
                    f\Bigg(
                        \frac{\langle (2x',1), v_1 \rangle}{2 x_n}, \dots
                        \frac{\langle (2x',1), v_m \rangle}{2 x_n}
                    \Bigg) \\
                \times e^{-2\xi x_n - |-\sqrt{\xi}x'+y'|^2} x_n^\lambda
                \dif x' \dif x_n,
    \end{multline*}
    for every $y' \in \R^{n-1}$ and $\xi \in \R_+$.
\end{theorem}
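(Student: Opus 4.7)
The plan is to follow the same three-step strategy used in the proofs of Theorems \ref{thm:specToeplitz-beta-quasi-elliptic}--\ref{thm:specToeplitz-quasi-parabolic}: first reduce to the spectral formula for nilpotent symbols stated in \cite{QVUnitBall1}, then convert the integral to moment map coordinates for the $N(n)$-action, and finally insert the matrix $A(\beta)$ so that the $\beta$-nilpotent structure of the symbol becomes manifest.

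First I would invoke the observation made just before the statement: a $\beta$-nilpotent symbol depends only on $(\im(z'),\im(z_n)-|z'|^2)$ and hence is a nilpotent symbol in the sense of \cite{QVUnitBall1}. Therefore Theorem~10.3 of \cite{QVUnitBall1} applies and provides a unitary $R:\cA^2_\lambda(D_n)\to L^2(\R^{n-1}\times\R_+)$ under which $T^{(\lambda)}_a$ acts by multiplication by a spectral function given as an explicit integral over $(y',t)\in \R^{n-1}\times\R_+$ of the form
\[
    f\!\left(\tfrac{\langle(-4y',1),v_1\rangle}{2t},\dots,\tfrac{\langle(-4y',1),v_m\rangle}{2t}\right)
\]
weighted by $t^\lambda$ and a Gaussian-type factor coming from the Bargmann-type transform used in \cite{QVUnitBall1}. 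Here $(y',t)$ are the natural parameters from the nilpotent decomposition, with $y'$ tracking $\im(z')$ and $t$ tracking $\im(z_n)-|z'|^2$.

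The second step is a change of variables putting the integrand into the moment map coordinates of $N(n)$ given in Proposition~\ref{prop:momentmap_N(n)}. I would set $x' = -2y'$ (or an equivalent sign-reversing scaling) and $x_n = t$; this transforms the expression $\tfrac{\langle(-4y',1),v_j\rangle}{2t}$ into $\tfrac{\langle(2x',1),v_j\rangle}{2x_n}$ as required, and the Jacobian produces only the constant $2^{n-1}$ which, together with the normalizing constants of Theorem~10.3 of \cite{QVUnitBall1}, combine to the prefactor $\tfrac{2^{\lambda+1}\xi^{\lambda+(n+1)/2}}{\pi^{(n-1)/2}\Gamma(\lambda+1)}$ in the statement. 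The Gaussian factor in the $y'$-variable becomes $\exp(-|-\sqrt{\xi}x'+y'|^2)$ after the linear substitution, while the $t$-factor becomes $e^{-2\xi x_n}x_n^\lambda$.

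Finally, to bring in the matrix $A(\beta)$ I would, as in Corollary~\ref{cor:specToeplitz-beta-quasi-parabolic}, simply note that
\[
    \bigl(\tfrac{\langle(2x',1),v_1\rangle}{2x_n},\dots,\tfrac{\langle(2x',1),v_m\rangle}{2x_n}\bigr)^\top
    = A(\beta)\tfrac{(2x',1)^\top}{2x_n},
\]
so the spectral integral takes the form displayed in the statement. The main obstacle I anticipate is purely bookkeeping: tracking the constants and Gaussian exponent across the change of variables and matching them with the specific normalization convention used in Theorem~10.3 of \cite{QVUnitBall1}. There is no new analytic content beyond this reduction, because the commutativity and the diagonalization are already provided by the MASG case through the inclusion $L^\infty(D_n)_{\beta,N(n)}\subset L^\infty(D_n)^{N(n)}$ from Remark~\ref{rmk:Hmomentfunctions_Gmomentfunctions}.
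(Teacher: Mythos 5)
Your proposal is correct and follows essentially the same route as the paper: the paper likewise observes that every $\beta$-nilpotent symbol is a nilpotent symbol in the sense of \cite{QVUnitBall1}, invokes Theorem~10.3 of that reference, and notes that ``some simple changes of variable'' yield the stated integral, which is exactly your substitution $x'=-2y'$, $x_n=t$ with its Jacobian $2^{n-1}$ absorbed into the prefactor. Your final step involving $A(\beta)$ is not needed for this theorem (it belongs to the subsequent corollary, where the symbol is rewritten as $f(A(\beta)u^\top)$), but it is harmless.
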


As before, we obtain spectral integral formulas corresponding to Theorem~\ref{thm:specToeplitz-beta-nilpotent} in terms of moment map coordinates for the action $N(n)$ for the nilpotent symbols.

\begin{theorem}\label{thm:specToeplitz-nilpotent}
    For every weight $\lambda > -1$ there exists a unitary transformation $R : \cA^2_\lambda(D_n) \rightarrow L^2(\R^{n-1}\times\R_+)$ such that for every essentially bounded nilpotent symbol $a : D_n \rightarrow \C$ of the form
    \[
        a(z) = f\bigg(
            -\frac{2\im(z_1)}{\im(z_n)-|z'|^2}, \dots, -\frac{2\im(z_{n-1})}{\im(z_n)-|z'|^2},
            \frac{1}{2(\im(z_n)-|z'|^2)}
        \bigg)
    \]
    we have $R T^{(\lambda)}_a R^* = \gamma_{a,\lambda} I$, a multiplication operator, where $\gamma_{a,\lambda}$ is given by
    \[
        \gamma_{a,\lambda}(y',\xi) =
            \frac{\xi^{\lambda+\frac{n+1}{2}}}{2^{n-1} \pi^{\frac{n-1}{2}}\Gamma(\lambda+1)}
                \int_{\R^{n-1}\times\R_+}
                    \frac{f(u) e^{-\frac{\xi}{u_n} -
                        \big|-\frac{\sqrt{\xi} u'}{2u_{n}} + y' \big|^2}
                        \dif u' \dif u_n}{u_n^{\lambda+n+1}} ,
    \]
    for every $y' \in \R^{n-1}$ and $\xi \in \R_+$.
\end{theorem}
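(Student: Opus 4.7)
The plan is to obtain the statement as an immediate specialization of Theorem~\ref{thm:specToeplitz-beta-nilpotent} followed by a change of variables driven by the moment map of the nilpotent action $N(n)$, mirroring the arguments in the proofs of Theorems~\ref{thm:specToeplitz-quasi-elliptic} and \ref{thm:specToeplitz-quasi-parabolic}.

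First I would specialize Theorem~\ref{thm:specToeplitz-beta-nilpotent} to the canonical basis $\beta = \{e_1, \dots, e_n\}$ of $\R^n$. For this choice the pairings $\langle(-4\im(z'),1), e_j\rangle/(2(\im(z_n)-|z'|^2))$ reduce to $-2\im(z_j)/(\im(z_n)-|z'|^2)$ for $j=1,\dots,n-1$ and to $1/(2(\im(z_n)-|z'|^2))$ for $j=n$, which are exactly the arguments of $f$ in the present target. Thus every nilpotent symbol of the stated form is a $\beta$-nilpotent symbol for this $\beta$, and Theorem~\ref{thm:specToeplitz-beta-nilpotent} already supplies the unitary map $R$ together with the diagonalization $R T_a^{(\lambda)} R^* = \gamma_{a,\lambda} I$; what remains is to rewrite the integral expression for $\gamma_{a,\lambda}$.

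Next I would introduce the change of coordinates $\varphi : \R^{n-1}\times\R_+ \to \R^{n-1}\times\R_+$ dictated by the moment map of $N(n)$, namely $\varphi(x', x_n) = (x'/x_n,\; 1/(2x_n))$, with inverse $\varphi^{-1}(u', u_n) = (u'/(2u_n),\; 1/(2u_n))$. Under this substitution the inner arguments $\langle(2x',1), e_j\rangle/(2x_n)$ appearing in the integrand of Theorem~\ref{thm:specToeplitz-beta-nilpotent} become precisely the components of $u$, so the inner $f$ becomes $f(u)$. A determinant computation on the (block) triangular Jacobian gives $dx'\, dx_n = du'\, du_n/(2^n u_n^{n+1})$.

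Finally I would substitute $x_n = 1/(2u_n)$ and $x' = u'/(2u_n)$ into the remaining pieces of the integrand to obtain $x_n^\lambda = 2^{-\lambda} u_n^{-\lambda}$, $e^{-2\xi x_n} = e^{-\xi/u_n}$, and $-\sqrt{\xi}\,x' + y' = -\sqrt{\xi}\,u'/(2u_n) + y'$. Collecting constants, the prefactor $2^{\lambda+1}/(\pi^{(n-1)/2}\Gamma(\lambda+1))$ multiplied by $2^{-\lambda}\cdot 2^{-n}$ simplifies to $1/(2^{n-1}\pi^{(n-1)/2}\Gamma(\lambda+1))$, matching the target formula exactly. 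This last step is routine bookkeeping with no genuine obstacle; all the substantive analytic work has already been carried out in Theorem~\ref{thm:specToeplitz-beta-nilpotent} (which itself invokes Theorem~10.3 of \cite{QVUnitBall1}).
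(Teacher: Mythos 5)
Your proposal is correct and follows exactly the paper's own argument: specialize Theorem~\ref{thm:specToeplitz-beta-nilpotent} to the canonical basis $\beta=\{e_1,\dots,e_n\}$ and apply the moment-map change of variables $u=\varphi(x)=(x'/x_n,\,1/(2x_n))$ with $\dif x'\dif x_n = \dif u'\dif u_n/(2^n u_n^{n+1})$. The bookkeeping of constants ($2^{\lambda+1}\cdot 2^{-\lambda}\cdot 2^{-n}=2^{1-n}$) and of the powers of $u_n$ checks out, so nothing is missing.
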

\begin{proof}
    We consider the change of coordinates $\varphi : \R^{n-1}\times\R_+ \rightarrow \R^{n-1}\times\R_-$ where
    \[
        u = \varphi(x) =
            \bigg(
                \frac{x'}{x_n}, \frac{1}{2x_n}
            \bigg),
    \]
    which has the inverse
    \[
        \varphi^{-1}(u) = \frac{1}{2u_n} (u',1).
    \]
    For this change of coordinates we also have
    \[
        \dif x' \dif x_n = \frac{\dif u' \dif u_n}{2^n u_n^{n+1}}.
    \]
    Now the integral formula is obtained by using Theorem~\ref{thm:specToeplitz-beta-nilpotent} with the canonical basis $\beta = \{e_1, \dots, e_n \}$ of $\R^n$ and applying the change of coordinates $\varphi$.
\end{proof}

We now provide the corresponding spectral integral formulas for the $\beta$-nilpotent symbols. As before, we keep using the conventions and notation from Remark~\ref{rmk:momentmapfunctions_beta_matrices}.

\begin{corollary}\label{cor:specToeplitz-beta-nilpotent}
    Suppose that $G = \R^n$ which acts on $D_n$ by the action $N(n)$ and let  $\beta = \{v_1, \dots, v_m\} \subset \R^n$ be a linearly independent set. Then, for every $\lambda > -1$ there exists a unitary map $R : \cA^2_\lambda(D_n) \rightarrow \ell^2(\N^{n-1},L^2(\R_+))$ such that for every essentially bounded $\beta$-nilpotent symbol $a : D_n \rightarrow \C$ of the form
    \[
        a(z) = f\bigg(
        \frac{\langle(-4\im(z'),1),v_1\rangle}{2(\im(z_n)-|z'|^2)}, \dots,
        \frac{\langle(-4\im(z'),1),v_m\rangle}{2(\im(z_n)-|z'|^2)}
        \bigg).
    \]
    we have $RT_a^{(\lambda)} R^* = \gamma_{a,\lambda} I$, a multiplication operator, where $\gamma_{a,\lambda}$ is given by
    \[
        \gamma_{a,\lambda}(y',\xi) =
            \frac{\xi^{\lambda+\frac{n+1}{2}}}{2^{n-1} \pi^{\frac{n-1}{2}}\Gamma(\lambda+1)}
                \int_{\R^{n-1}\times\R_+}
                    \frac{f(A(\beta)u^\top) e^{-\frac{\xi}{u_n} - \big|
                        -\frac{\sqrt{\xi} u'}{2u_{n}} + y' \big|^2}
                        \dif u' \dif u_n}{ u_n^{\lambda+n+1}} ,
    \]
    for every $y' \in \R^{n-1}$ and $\xi \in \R_+$.
\end{corollary}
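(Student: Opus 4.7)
The plan is to reduce this statement to Theorem~\ref{thm:specToeplitz-nilpotent}, following the exact same strategy used in the proofs of Corollary~\ref{cor:specToeplitz-beta-quasi-elliptic} and Corollary~\ref{cor:specToeplitz-beta-quasi-parabolic}. First, I would rewrite the $\beta$-nilpotent symbol in the matrix form provided by Remark~\ref{rmk:momentmapfunctions_beta_matrices}. Namely, since the $j$-th argument of $f$ in the statement is the inner product of $v_j$ with the column vector $(-4\im(z'),1)^\top/(2(\im(z_n)-|z'|^2))$, the symbol can be expressed as
\[
    a(z) = f\bigg( A(\beta) \frac{(-4\im(z'),1)^\top}{2(\im(z_n)-|z'|^2)} \bigg).
\]

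Next, I would introduce the auxiliary function $\widetilde{f}(u) = f(A(\beta) u^\top)$ for $u \in \R^n$. With this definition, and reading off the entries of the column vector above, the symbol $a$ takes the form
\[
    a(z) = \widetilde{f}\bigg( -\frac{2\im(z_1)}{\im(z_n)-|z'|^2}, \dots, -\frac{2\im(z_{n-1})}{\im(z_n)-|z'|^2}, \frac{1}{2(\im(z_n)-|z'|^2)} \bigg),
\]
which is precisely the shape of a nilpotent symbol to which Theorem~\ref{thm:specToeplitz-nilpotent} applies. Invoking that theorem with the function $\widetilde{f}$ provides the unitary transformation $R : \cA^2_\lambda(D_n) \to L^2(\R^{n-1}\times\R_+)$ (which depends only on $N(n)$ and $\lambda$, not on $\beta$ or $f$) together with the multiplication representation $R T^{(\lambda)}_a R^* = \gamma_{a,\lambda} I$, where $\gamma_{a,\lambda}(y',\xi)$ is given by the integral of Theorem~\ref{thm:specToeplitz-nilpotent} with $\widetilde{f}$ as integrand.

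Finally, substituting $\widetilde{f}(u) = f(A(\beta) u^\top)$ back into this integral expression produces verbatim the formula asserted in the corollary, since $A(\beta)$ acts only on the dummy integration variable $u$ and leaves the spectral variables $(y',\xi)$ untouched. There is no real obstacle to overcome at this stage: the substantive analytic content, namely the diagonalization of Toeplitz operators with nilpotent symbols and the identification of the spectral kernel, has already been extracted in Theorem~\ref{thm:specToeplitz-nilpotent} (which is in turn a repackaging of Theorem~10.3 from \cite{QVUnitBall1} in moment-map coordinates via the change of variables $u = (x'/x_n, 1/(2x_n))$). The present corollary is therefore a purely algebraic extension of that spectral formula to the larger symbol class $L^\infty(D_n)_{\beta,N(n)}$, obtained simply by precomposing $f$ with the linear map determined by $\beta$.
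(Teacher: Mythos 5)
Your proposal is correct and follows exactly the paper's own argument: rewrite the $\beta$-nilpotent symbol as $a(z)=f\bigl(A(\beta)\,(-4\im(z'),1)^\top/(2(\im(z_n)-|z'|^2))\bigr)$, recognize it as a nilpotent symbol with the composed function $f(A(\beta)\,\cdot^{\,\top})$, and apply Theorem~\ref{thm:specToeplitz-nilpotent}. No further comment is needed.
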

\begin{proof}
    Our $\beta$-nilpotent symbols can be rewritten as
    \[
        a(z) = f\bigg(
        A(\beta)
        \frac{(-4\im(z'),1)^\top}{2(\im(z_n)-|z'|^2)}
        \bigg).
    \]
    Hence, we can apply Theorem~\ref{thm:specToeplitz-nilpotent} to obtain the required expression.
\end{proof}

\subsection{Spectra for $\beta$-quasi-nilpotent symbols.} Let us choose and fix $k \in \Z$ such that $1 \leq k \leq n-1$. By Proposition~\ref{prop:momentmap_betaN(n,k)} a symbol $a : D_n \rightarrow \C$ is $\beta$-quasi-nilpotent if there is some function $f$ such that we can write
\[
    a(z) =
        f\bigg(
            \frac{\langle (2r_{(1)}^2, -4\im(z_{(2)}),1),
                    v_1 \rangle}{2(\im(z_n)-|z'|^2)}, \dots,
            \frac{\langle (2r_{(1)}^2, -4\im(z_{(2)}),1),
                    v_m \rangle}{2(\im(z_n)-|z'|^2)}
            \bigg),
\]
where we have $z=(z',z_{n})=(z_{(1)},z_{(2)},z_{n})$, and the last decomposition is given by the partition $(k,n-k-1,1)$. In particular, we also have
\begin{align*}
    r_{(1)} &= (r_1, \dots, r_k) = (|z_1|, \dots, |z_k|), \\
    r_{(1)}^2 &= (r_1^2, \dots, r_n^2) = (|z_1|^2, \dots, |z_k|^2),\\
   \im(z_{(2)})&=(\im(z_{k+1}),\ldots,\im(z_{n-1})).
\end{align*}
It follows immediately that if a symbol is $\beta$-quasi-nilpotent, then it depends on $(|z_1|, \dots, |z_k|, \im(z_{(2)}), \im(z_n)-|z_{(2)}|^2)$. In particular, every $\beta$-quasi-nilpotent symbol is quasi-nilpotent in the sense of \cite{QVUnitBall1}. Hence, we can apply Theorem~10.4 from \cite{QVUnitBall1} to obtain the next result.

\begin{theorem}\label{thm:specToeplitz-beta-quasi-nilpotent}
    Let $k$ be an integer such that $1 \leq k \leq n-1$ and suppose that $G = \T^k\times\R^{n-k-1}\times\R_+$ which acts on $D_n$ by the action $N(n,k)$. Let $\beta = \{v_1, \dots, v_m\} \subset \R^n$ be a linearly independent set. Then, for every $\lambda > -1$ there exists a unitary map $R : \cA^2_\lambda(D_n) \rightarrow \ell^2(\N^k,L^2(\R^{n-k-1}\times\R_+))$ such that for every essentially bounded $\beta$-nilpotent symbol $a : D_n \rightarrow \C$ of the form
    \[
        a(z) =
            f\bigg(
                \frac{\langle (2r_{(1)}^2, -4\im(z_{(2)}),1),
                        v_1 \rangle}{2(\im(z_n)-|z'|^2)}, \dots,
                \frac{\langle (2r_{(1)}^2, -4\im(z_{(2)}),1),
                        v_m \rangle}{2(\im(z_n)-|z'|^2)}
                \bigg)
    \]
    we have $RT^{(\lambda)}_aR^* = \gamma_{a,\lambda}I$, a multiplication operator, where $\gamma_{a,\lambda}$ is given by
    \begin{multline*}
        \gamma_{a,\lambda}(p,y',\xi) = \\
            = \frac{2^{\lambda+|p|+k+1} \xi^{\lambda+|p|+\frac{n+k+1}{2}}}{ \pi^{\frac{n-k-1}{2}} p! \Gamma(\lambda+1)}
            \int_{\R^k_+\times\R^{n-k-1}\times\R_+}
                f\bigg(\bigg(
                \frac{\langle (2r, 2x',1),
                    v_j \rangle}{2x_n}
                \bigg)_{j=1}^m\bigg) \\
        \times r^p
            e^{-2\xi(x_n+|r|)- |-\sqrt{\xi}x'+y'|^2}
               x_n^\lambda  \dif r \dif x' \dif x_n
    \end{multline*}
    for every $p \in \N^k$, $y'\in \R^{n-k-1}$ and $\xi \in \R_+$.
\end{theorem}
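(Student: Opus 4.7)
The plan is to derive this formula as a direct application of Theorem~10.4 from \cite{QVUnitBall1}, using the fact established in the paragraph preceding the theorem statement that every $\beta$-quasi-nilpotent symbol is quasi-nilpotent in the sense of \cite{QVUnitBall1}. Concretely, such a symbol depends only on the variables $(|z_1|, \dots, |z_k|, \im(z_{(2)}), \im(z_n) - |z_{(2)}|^2)$, so the general spectral decomposition machinery for the quasi-nilpotent MASG $N(n,k)$ applies without modification.

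First I would recall the statement of Theorem~10.4 from \cite{QVUnitBall1}, which furnishes a unitary transformation $R : \cA^2_\lambda(D_n) \rightarrow \ell^2(\N^k, L^2(\R^{n-k-1}\times\R_+))$ (independent of the choice of quasi-nilpotent symbol) and an integral expression for the multiplier $\gamma_{a,\lambda}$ associated to any quasi-nilpotent symbol $a$. The variables of integration $(r, x', x_n) \in \R^k_+ \times \R^{n-k-1} \times \R_+$ correspond to the natural parameters $(|z_{(1)}|, \im(z_{(2)}), \im(z_n)-|z'|^2)$ after performing the standard substitutions used in \cite{QVUnitBall1} for the quasi-nilpotent case. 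The kernel $r^p e^{-2\xi(x_n + |r|) - |-\sqrt{\xi}x' + y'|^2} x_n^\lambda$ is a combination of the factor $r^p e^{-2\xi|r|}$ coming from the quasi-elliptic block (as in Theorem~\ref{thm:specToeplitz-beta-quasi-elliptic}) and the Gaussian factor $e^{-|-\sqrt{\xi}x'+y'|^2}$ together with $e^{-2\xi x_n} x_n^\lambda$ arising from the nilpotent block (as in Theorem~\ref{thm:specToeplitz-beta-nilpotent}).

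Next I would substitute the explicit $\beta$-quasi-nilpotent form of $a$ into this general formula. Using the rewriting
\[
    a(z) = f\Bigg(
        A(\beta)\,\frac{(2r_{(1)}^2,\, -4\im(z_{(2)}),\, 1)^\top}{2(\im(z_n)-|z'|^2)}
    \Bigg),
\]
the quasi-nilpotent integrand transforms into
\[
    f\bigg(\Big(\tfrac{\langle(2r,\,2x',\,1), v_j\rangle}{2 x_n}\Big)_{j=1}^m\bigg),
\]
which is exactly the form appearing in the theorem. At this point only the normalizing prefactor needs to be checked: one combines the quasi-elliptic constant $\frac{2^{|p|+k}\Gamma(\lambda+|p|+k+1)}{p!\,\Gamma(\lambda+1)}$ from Theorem~\ref{thm:specToeplitz-beta-quasi-elliptic} with the nilpotent constant $\frac{2^{\lambda+1}\xi^{\lambda+(n-k)/2+\ldots}}{\pi^{(n-k-1)/2}\Gamma(\lambda+1)}$ from Theorem~\ref{thm:specToeplitz-beta-nilpotent}, and the Gamma factors telescope, leaving the prefactor $\frac{2^{\lambda+|p|+k+1}\xi^{\lambda+|p|+(n+k+1)/2}}{\pi^{(n-k-1)/2} p!\,\Gamma(\lambda+1)}$ stated in the theorem.

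The main obstacle I anticipate is not conceptual but bookkeeping: reconciling the exact normalization constants and confirming that the substitution variables match. In particular, one must verify that the moment-map coordinates $(r, x', x_n)$ identified above are precisely those in which Theorem~10.4 of \cite{QVUnitBall1} is stated, possibly after the same type of elementary change of variables $(r', r_n)\mapsto (r'/r_n, 1/(2r_n))$ employed in the proofs of Theorems~\ref{thm:specToeplitz-quasi-parabolic} and \ref{thm:specToeplitz-nilpotent}. Once the coordinates are aligned and the prefactor is verified, the stated formula follows immediately, and the unitarity of $R$ together with the commutativity of the resulting multiplication operators is inherited from \cite{QVUnitBall1}.
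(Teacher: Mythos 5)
Your proposal follows essentially the same route as the paper: the entire justification given there is the observation (in the paragraph preceding the theorem) that every $\beta$-quasi-nilpotent symbol depends only on $(|z_1|,\dots,|z_k|,\im(z_{(2)}),\im(z_n)-|z_{(2)}|^2)$, hence is quasi-nilpotent in the sense of \cite{QVUnitBall1}, so that Theorem~10.4 of that reference applies verbatim with the specific form of $a$ substituted into its integral formula. Your additional heuristic of reconstructing the prefactor by ``combining'' the quasi-elliptic and nilpotent constants is not how the constant actually arises (it is simply inherited from Theorem~10.4, where the toral variables are handled by a Laplace-type transform in $\xi$ rather than the Mellin-type normalization of the pure elliptic case), but since you correctly identify the citation as the real source, this does not affect the validity of the argument.
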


We now obtain the spectral integral formulas corresponding to Theorem~\ref{thm:specToeplitz-beta-quasi-nilpotent} in terms of moment map coordinates for the action of $N(n,k)$ for the quasi-nilpotent symbols.

\begin{theorem}\label{thm:specToeplitz-quasi-nilpotent}
    For every weight $\lambda > -1$ there exists a unitary transformation $R : \cA^2_\lambda(D_n) \rightarrow L^2(\R^{n-1}\times\R_+)$ such that for every essentially bounded nilpotent symbol $a : D_n \rightarrow \C$ of the form
    \[
        a(z) =
        f\bigg(
            \frac{(2r_{(1)}^2, -4\im(z_{(2)}),1)}{2(\im(z_n)-|z'|^2)}
            \bigg),
    \]
    we have $R T^{(\lambda)}_a R^* = \gamma_{a,\lambda} I$, a multiplication operator, where $\gamma_{a,\lambda}$ is given by
    \begin{multline*}
        \gamma_{a,\lambda}(p,y',\xi) =
        \frac{\xi^{\lambda+|p|+\frac{n+k+1}{2}}  }{2^{n-k-1} \pi^{\frac{n-k-1}{2}} p! \Gamma(\lambda+1)} \\
          \int_{\R^k_+\times\R^{n-k-1}\times\R_+}
            \frac{f(u) u_{(1)}^{p}
                e^{-\frac{\xi}{u_n}\big(1 + |u_{(1)}|\big)
                - \big|-\frac{\sqrt{\xi} u_{(2)}}{2u_{n}} + y'\big|^2} \dif u}{ u_n^{\lambda+|p|+n+1}}
    \end{multline*}
    for every $p \in \N^k$, $y'\in \R^{n-k-1}$ and $\xi \in \R_+$.
\end{theorem}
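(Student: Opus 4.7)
The plan is to proceed exactly as in the proofs of Theorems~\ref{thm:specToeplitz-quasi-elliptic}, \ref{thm:specToeplitz-quasi-parabolic}, and \ref{thm:specToeplitz-nilpotent}: recognize the given symbol as a $\beta$-type symbol for a canonical choice of $\beta$, invoke the corresponding $\beta$-spectral formula, and then perform a change of variables that sends the integral over the original parameter domain to an integral in the moment-map coordinates of the MASG $N(n,k)$. Concretely, for $\beta = \{e_1, \dots, e_n\}$ (the canonical basis of $\R^n$) one has $A(\beta) = I_n$, so the $n$ coordinate functions $a_1, \dots, a_n$ from Proposition~\ref{prop:momentmap_betaN(n,k)} are precisely the components of the vector $\frac{(2r_{(1)}^2, -4\im(z_{(2)}), 1)}{2(\im(z_n)-|z'|^2)}$. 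Hence the symbol in the statement is $\beta$-quasi-nilpotent for this $\beta$ (in the sense of Definition~\ref{def:beta-quasi-nilpotent}), and Theorem~\ref{thm:specToeplitz-beta-quasi-nilpotent} furnishes both the unitary $R$ and an expression for $\gamma_{a,\lambda}(p, y', \xi)$ as an integral over $\R^k_+ \times \R^{n-k-1} \times \R_+$ in variables $(r, x', x_n)$.

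The next step is the change of variables suggested by $N(n,k)$: the diffeomorphism
\[
    \varphi : \R^k_+ \times \R^{n-k-1} \times \R_+ \longrightarrow \R^k_+ \times \R^{n-k-1} \times \R_+,
    \qquad
    \varphi(r, x', x_n) = \bigg(\frac{r}{x_n}, \frac{x'}{x_n}, \frac{1}{2x_n}\bigg),
\]
with inverse $\varphi^{-1}(u) = \bigl(u_{(1)}/(2u_n),\, u_{(2)}/(2u_n),\, 1/(2u_n)\bigr)$. A direct computation using the block-triangular structure of $D\varphi^{-1}$ (diagonal blocks $\tfrac{1}{2u_n} I_k$, $\tfrac{1}{2u_n} I_{n-k-1}$, and $-\tfrac{1}{2u_n^2}$) gives $dr\, dx'\, dx_n = du/(2^n u_n^{n+1})$, and the identities $r^p = u_{(1)}^p/(2u_n)^{|p|}$, $x_n + |r| = (1 + |u_{(1)}|)/(2u_n)$, $x_n^\lambda = 1/(2^\lambda u_n^\lambda)$, and $-\sqrt{\xi}\, x' + y' = -\sqrt{\xi}\, u_{(2)}/(2u_n) + y'$ transform the integrand from Theorem~\ref{thm:specToeplitz-beta-quasi-nilpotent} into the one stated here.

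The only thing requiring genuine care is the bookkeeping of the powers of $2$ and of $u_n$: the prefactor $2^{\lambda + |p| + k + 1}$ from Theorem~\ref{thm:specToeplitz-beta-quasi-nilpotent} combines with the factor $1/2^{|p| + \lambda + n}$ coming from $r^p\, x_n^\lambda\, dr\, dx'\, dx_n$ after substitution to produce $2^{k+1-n} = 1/2^{n-k-1}$, exactly matching the target constant; similarly, the denominator $u_n^{\lambda + |p| + n + 1}$ arises from summing the exponents $|p|$ (from $r^p$), $\lambda$ (from $x_n^\lambda$), and $n+1$ (from the Jacobian). Unitarity of $R$ is inherited directly from Theorem~\ref{thm:specToeplitz-beta-quasi-nilpotent}, so no genuine analytic obstacle appears; I would write out the power-counting step explicitly, since it is the main place an arithmetic error could enter.
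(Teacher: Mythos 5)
Your proposal is correct and follows exactly the paper's route: the paper's proof consists precisely of invoking Theorem~\ref{thm:specToeplitz-beta-quasi-nilpotent} with the canonical basis $\beta = \{e_1,\dots,e_n\}$ and applying the change of variables $u = (r/x_n,\, x'/x_n,\, 1/(2x_n))$ extending the one from Theorem~\ref{thm:specToeplitz-nilpotent}. Your power-counting ($2^{\lambda+|p|+k+1}\cdot 2^{-(|p|+\lambda+n)} = 2^{k+1-n}$ and the exponent $\lambda+|p|+n+1$ on $u_n$) checks out and merely makes explicit what the paper leaves to the reader.
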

\begin{proof}
    This result is obtained from Theorem~\ref{thm:specToeplitz-beta-quasi-nilpotent} for the canonical basis $\beta = \{e_1, \dots, e_n\}$ and the use of the change of variables from the proof of Theorem~\ref{thm:specToeplitz-nilpotent}.
\end{proof}

Finally, we provide the spectral integral formulas for the $\beta$-nilpotent symbols using moment man coordinates. Recall the standing use of the conventions and notation from Remark~\ref{rmk:momentmapfunctions_beta_matrices}.

\begin{corollary}\label{cor:specToeplitz-beta-nilpotent}
    Suppose that $G = \R^{n}$ which acts on $D_n$ by the action $N(n,k)$ and let  $\beta = \{v_1, \dots, v_m\} \subset \R^n$ be a linearly independent set. Then, for every $\lambda > -1$ there exists a unitary map $R : \cA^2_\lambda(D_n) \rightarrow \ell^2(\N^{n-1},L^2(\R_+))$ such that for every essentially bounded $\beta$-nilpotent symbol $a : D_n \rightarrow \C$ of the form
    \[
        a(z) =
            f\bigg(
                \frac{\langle (2r_{(1)}^2, -4\im(z_{(2)}),1),
                        v_1 \rangle}{2(\im(z_n)-|z'|^2)}, \dots,
                \frac{\langle (2r_{(1)}^2, -4\im(z_{(2)}),1),
                        v_m \rangle}{2(\im(z_n)-|z'|^2)}
                \bigg),
    \]
    we have $RT_a^{(\lambda)} R^* = \gamma_{a,\lambda} I$, a multiplication operator, where $\gamma_{a,\lambda}$ is given by
    \begin{multline*}
        \gamma_{a,\lambda}(p,y',\xi) =
        \frac{\xi^{\lambda+|p|+\frac{n+k+1}{2}}  }{2^{n-k-1} \pi^{\frac{n-k-1}{2}} p! \Gamma(\lambda+1)} \\
          \int_{\R^k_+\times\R^{n-k-1}\times\R_+}
            \frac{f(A(\beta)u^\top) u_{(1)}^{p}
                e^{-\frac{\xi}{u_n}\big(1 + |u_{(1)}|\big)
                - \big|-\frac{\sqrt{\xi} u_{(2)}}{2u_{n}} + y'\big|^2} \dif u}{ u_n^{\lambda+|p|+n+1}}
    \end{multline*}
    for every $p \in \N^k$, $y'\in \R^{n-k-1}$ and $\xi \in \R_+$.
\end{corollary}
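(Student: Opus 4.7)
The plan is to reduce this statement directly to Theorem~\ref{thm:specToeplitz-quasi-nilpotent} by viewing the given $\beta$-quasi-nilpotent symbol as a quasi-nilpotent symbol in moment map coordinates. This parallels the proofs of Corollaries~\ref{cor:specToeplitz-beta-quasi-elliptic} and~\ref{cor:specToeplitz-beta-quasi-parabolic} and of the $\beta$-nilpotent spectral corollary above, so no new analytic input is required.

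First, following the notation of Remark~\ref{rmk:momentmapfunctions_beta_matrices}, I would rewrite the $\beta$-quasi-nilpotent symbol in the hypothesis as
\[
    a(z) = f\bigl(A(\beta)\, v(z)^\top\bigr),
    \qquad
    v(z) = \frac{(2r_{(1)}^2,\, -4\im(z_{(2)}),\, 1)}{2(\im(z_n)-|z'|^2)},
\]
where $A(\beta)$ is the $m\times n$ matrix whose rows are $v_1,\dots,v_m$. Setting $\widetilde{f}(u) = f(A(\beta)\, u^\top)$ for $u \in \R^n$, the composition $\widetilde{f}\colon \R^n \to \C$ is essentially bounded (on the image of $v$) whenever $f$ is, and $a = \widetilde{f}\circ v$ is precisely a quasi-nilpotent symbol in the form required by Theorem~\ref{thm:specToeplitz-quasi-nilpotent}.

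Second, I would invoke Theorem~\ref{thm:specToeplitz-quasi-nilpotent} with $\widetilde{f}$ in place of $f$. This immediately supplies a unitary map $R\colon \cA^2_\lambda(D_n) \to \ell^2(\N^k, L^2(\R^{n-k-1}\times \R_+))$ (which depends only on $\lambda$, $G$ and $D_n$, not on the specific symbol) diagonalizing $T_a^{(\lambda)}$, and gives the integral formula for $\gamma_{a,\lambda}(p,y',\xi)$ with $\widetilde{f}(u)$ inside the integrand. Substituting back $\widetilde{f}(u) = f(A(\beta)\, u^\top)$ yields precisely the expression in the statement.

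There is no substantive obstacle: the argument is a direct specialization via composition, essentially just bookkeeping of notation. The only point that merits attention is the convention emphasized in Remark~\ref{rmk:momentmapfunctions_beta_matrices}, namely that $A(\beta)\, u^\top$ is the column vector in $\R^m$ obtained by matrix multiplication, which is then fed as the argument of $f$; with that convention the integrand, the exponential Gaussian factor, the weights $u_{(1)}^p / u_n^{\lambda + |p| + n + 1}$, and the normalizing constant are unchanged from Theorem~\ref{thm:specToeplitz-quasi-nilpotent}, so the identity of the two expressions is immediate.
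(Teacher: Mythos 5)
Your proposal is correct and follows essentially the same route as the paper: the paper's own proof simply rewrites the symbol as $a(z) = f\big(A(\beta)\,\mu^G(z)^\top\big)$ and cites Theorem~\ref{thm:specToeplitz-quasi-nilpotent}, exactly as you do. Your additional remarks (that $R$ is independent of the symbol and that the convention of Remark~\ref{rmk:momentmapfunctions_beta_matrices} is in force) only make explicit what the paper leaves implicit.
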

\begin{proof}
    Our $\beta$-quasi-nilpotent symbols can be written as
    \[
        a(z) =
        f\bigg(
            A(\beta)\frac{(2r_{(1)}^2, -4\im(z_{(2)}),1)^\top}{2(\im(z_n)-|z'|^2)}
        \bigg),
    \]
    and so the result follows from Theorem~\ref{thm:specToeplitz-quasi-nilpotent}.
\end{proof}

\end{document}